%
%
%
\documentclass[a4paper,11pt]{amsart}
\usepackage[arrow,curve,matrix]{xy}
\usepackage[a4paper,twoside,centering]{geometry}

\title{On Jacobian algebras from closed surfaces}
\author{Sefi Ladkani}
\address{%
Mathematical Institute of the University of Bonn \\
Endenicher Allee 60 \\
53115 Bonn, Germany}
\urladdr{http://www.math.uni-bonn.de/people/sefil}
\email{sefil@math.uni-bonn.de}

\thanks{The author is supported by DFG grant LA 2732/1-1 in the
framework of the priority program SPP 1388 ``Representation
theory''.}

\DeclareMathOperator{\Hom}{Hom}
\DeclareMathOperator{\PSL}{PSL}
\DeclareMathOperator{\rank}{rank}

\newcommand{\balpha}{\bar{\alpha}}
\newcommand{\bbeta}{\bar{\beta}}
\newcommand{\cC}{\mathcal{C}}
\newcommand{\gl}{\lambda}
\newcommand{\gL}{\Lambda}
\newcommand{\vphi}{\varphi}
\newcommand{\cP}{\mathcal{P}}
\newcommand{\bZ}{\mathbb{Z}}
\newcommand{\half}{{^1}{\!\!}/{\!}{_2}}

\theoremstyle{plain}
\newtheorem*{theorem*}{Theorem}
\newtheorem{theorem}{Theorem}[section]
\newtheorem{prop}[theorem]{Proposition}
\newtheorem{lemma}[theorem]{Lemma}
\newtheorem{cor}[theorem]{Corollary}
\newtheorem*{cor*}{Corollary}

\theoremstyle{definition}
\newtheorem{defn}[theorem]{Definition}
\newtheorem{remark}[theorem]{Remark}

\numberwithin{equation}{section}

\begin{document}

\begin{abstract}
We show that the quivers with potentials associated to ideal
triangulations of marked surfaces with empty boundary are not rigid,
and their completed Jacobian algebras are finite-dimensional and
symmetric.
\end{abstract}

\maketitle

\section{Introduction}

In~\cite{Labardini09} Labardini-Fragoso associated a quiver with
potential to any ideal triangulation of a surface with marked points
in such a way that flips of triangulations correspond to mutations
of the associated quivers with potentials, thus providing a link
between the work of Fomin, Shapiro and Thurston~\cite{FST08} on
cluster algebras arising from marked surfaces and the theory of
quivers with potentials initiated by Derksen, Weyman and
Zelevinsky~\cite{DWZ08}.

When the surface has non-empty boundary, the potential associated to
any ideal triangulation is rigid and its Jacobian algebra is
finite-dimensional~\cite{Labardini09}. However, when the surface has
empty boundary, it was conjectured that the potential associated to
any ideal triangulation is not
rigid~\cite[Conjecture~34]{Labardini09}. The question whether its
Jacobian algebra is finite-dimensional or not has been open for some
time, see~\cite[Problem~8.1]{Labardini09b},
\cite[Question~6.4]{Labadrini12} and the
survey~\cite[Remark~3.17]{Amiot11}. The only cases where
finite-dimensionality has been established so far are the
once-punctured torus \cite[Example~8.2]{Labardini09b} and recently
the spheres with arbitrary number of
punctures~\cite{TrepodeValdivieso12}.

Our main result, stated in the following theorem, completely settles
these questions. Recall that the auxiliary algebraic data needed to
define the potential consists of a non-zero scalar (from a fixed
field) for each puncture.

\begin{theorem*}
Let $(S,M)$ be a surface with marked points and empty boundary.
\begin{enumerate}
\renewcommand{\theenumi}{\alph{enumi}}
\item
If $(S,M)$ is not a sphere with $4$ punctures, then for any choice
of scalars the quiver with potential associated to any ideal
triangulation of $(S,M)$ is not rigid and its (completed) Jacobian
algebra is finite-dimensional and symmetric.

\item
If $(S,M)$ is a sphere with $4$ punctures, then the same conclusion
holds provided that the product of the scalars is not equal to $1$.
\end{enumerate}
\end{theorem*}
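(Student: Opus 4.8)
\emph{Strategy and reduction.} The plan is to compute the completed Jacobian algebra $\cP=\cP(Q_\tau,W_\tau)$ for a well-chosen ideal triangulation $\tau$ of $(S,M)$, to equip it with an explicit symmetrizing linear form, and to read off non-rigidity from the resulting basis. Any two ideal triangulations of $(S,M)$ are related by flips, and flips correspond to mutations of quivers with potentials \cite{Labardini09}. Since the class of rigid quivers with potentials is closed under mutation \cite{DWZ08}, non-rigidity is a mutation invariant, so it suffices to check it for a single triangulation. Because $(S,M)$ has empty boundary and is not one of finitely many small exceptional surfaces (among them the once-punctured torus, which I would treat directly), it admits an ideal triangulation $\tau$ with no self-folded triangle; for such $\tau$ the potential $W_\tau$ is a sum of the $3$-cycles of the triangles minus $\sum_{p\in M}x_p\,c_p$, where $c_p$ is the cyclic path turning once around the puncture $p$ and $x_p$ is its scalar. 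I would run the remaining steps uniformly over all such $\tau$, so that finite-dimensionality and symmetry are obtained for every triangulation directly, without needing them to be mutation invariants.

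\emph{Finite-dimensionality.} For an arrow $a$ turning around a puncture $p$ of valency $d_p$, the relation $\partial_aW_\tau=0$ identifies a length-two path (completing a triangle) with a scalar multiple of the length-$(d_p-1)$ path obtained from $c_p$ by deleting $a$. Reading these relations in both directions lets one rewrite any path that winds around a puncture in terms of triangle-type paths and, upon substituting a relation into itself, shows that a path winding \emph{twice} around a puncture vanishes in $\cP$; together these bound the lengths of the paths occurring in a spanning set, so $\cP$ is finite-dimensional, with a basis indexed by the ``turning'' data of $\tau$. This is where the scalars enter decisively: on the $4$-punctured sphere the path that would wind twice closes up into an honest cycle $z$, and the self-substitution yields instead the relation $(1-\prod_{p}x_p)\,z=0$, which truncates the algebra only when $\prod_p x_p\neq 1$ --- this is the infinite-dimensional exception. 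I expect this winding analysis, together with isolating the single scalar identity that governs the $4$-punctured sphere, to be the technical heart of the argument.

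\emph{Symmetry.} The basis from the previous step has, at each vertex $i$ (arc of $\tau$), a unique longest basis path $i\to i$, namely one full turn around each of the two punctures incident to that arc; because $(S,M)$ has empty boundary this path does begin and end at the \emph{same} vertex (so $\cP$ is weakly symmetric). Define $\vphi\colon\cP\to k$ to extract the coefficient of these longest paths; one checks that $\vphi$ is well defined, that $\vphi(uv)=\vphi(vu)$, and that $(u,v)\mapsto\vphi(uv)$ is non-degenerate, so $\cP$ is symmetric. (One expects, in fact, an isomorphism of $\cP$ with a Brauer graph algebra attached to the ribbon graph underlying $\tau$, which would yield finite-dimensionality and symmetry at once and present the $4$-punctured sphere as the only case in which the Brauer datum degenerates; I would use this as a consistency check.)

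\emph{Non-rigidity.} By \cite{DWZ08}, $(Q_\tau,W_\tau)$ is rigid if and only if $\cP/\overline{[\cP,\cP]}$ is spanned by the classes of the vertices. Using the explicit basis one exhibits a cyclic path --- for instance a puncture cycle $c_p$ --- whose class does not lie in that span, the point being that the cancellations that kill cyclic classes modulo commutators in the surface-with-boundary case are here obstructed precisely by the term $x_p\,c_p$ of $W_\tau$; equivalently, since $\cP$ is symmetric one has $\dim_k Z(\cP)=\dim_k\cP/[\cP,\cP]$, and one checks this exceeds the number of arcs of $\tau$. Hence $(Q_\tau,W_\tau)$ is not rigid, and by the mutation-invariance of non-rigidity so is the quiver with potential of every ideal triangulation of $(S,M)$, the product condition on the scalars being needed only in the $4$-punctured sphere case to secure finite-dimensionality above.
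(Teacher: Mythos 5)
Your overall plan---read the cyclic-derivative relations in both directions to trade triangle paths against puncture paths, truncate by self-substitution, isolate a single scalar identity that governs the $4$-punctured sphere, then exhibit a Frobenius form for symmetry and a non-vanishing cycle class for non-rigidity---is essentially the paper's strategy. The genuine gap is your claim that this can be run \emph{uniformly} over every triangulation with no self-folded triangles, ``without needing mutation invariance''. First, absence of self-folded triangles does not by itself give the clean model you use: a puncture of valency $2$ is still possible, it produces $2$-cycles that must be cancelled in the adjacency quiver, and the potential then needs extra care; the paper works under \eqref{e:T3}. More seriously, the truncation step is \emph{not} a formal consequence of the relations. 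Substituting \eqref{e:1gfg} and \eqref{e:1fgf} into one another lengthens the path only when the relevant $g$-orbits have size at least $4$; if the four arrows $\alpha$, $\balpha$, $f(\alpha)$, $f(\balpha)$ all lie in $g$-orbits of size $3$ (four punctures of valency $3$ in a local configuration), the substitution closes up and yields only $\bigl(1-c_\alpha c_{\balpha}c_{f(\alpha)}c_{f(\balpha)}\bigr)\,\alpha\cdot g(\alpha)\cdot fg(\alpha)=0$, exactly the phenomenon you believe is confined to the $4$-punctured sphere. Your sketch gives no argument that such configurations (with scalar product $1$) cannot occur in triangulations of other surfaces, so for such a triangulation your direct proof of finite-dimensionality (and hence of symmetry, which rests on the resulting basis) simply does not get off the ground. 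This is precisely why the paper imposes condition \eqref{e:star} (equivalently \eqref{e:T3h}) or \eqref{e:diamond} together with the scalar hypothesis (Proposition~\ref{p:1gfg_zero}), proves in Section~\ref{sec:exist} that every closed surface admits a triangulation satisfying \eqref{e:T4}, \eqref{e:T3h} or \eqref{e:T3} as appropriate, and then transfers non-rigidity and finite-dimensionality to arbitrary triangulations by mutation invariance---the fallback you explicitly renounce. (For symmetry of \emph{all} triangulations even the paper does not argue directly: it invokes invariance under derived equivalence and defers the derived-equivalence statement to a subsequent paper.) To repair your argument you must either supply the existence of ``nice'' triangulations and use these invariance principles, or give a genuinely finer analysis of the stalled substitutions.

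Two smaller points. The once-punctured torus is not an exception needing separate treatment: its triangulations have a single puncture of valency $6$, so they satisfy \eqref{e:T4} and fall under the general argument. And the proposed consistency check via Brauer graph algebras is misleading: in $\gL$ both products $\alpha\cdot f(\alpha)$ and $\alpha\cdot g(\alpha)$ are non-zero, so $\gL$ is not special biserial and hence not a Brauer graph algebra; the correct picture is the explicit bimodule isomorphism $D\gL\simeq\gL$ built from completing $g$-paths to full puncture cycles, which is the paper's route and is compatible with your ``coefficient of the longest cycle'' form once the basis is in place.
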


The theorem provides in particular an explicit construction of
infinitely many families of symmetric, finite-dimensional Jacobian
algebras.

\medskip

As a consequence of the theorem we can associate a $\Hom$-finite
cluster category to any marked surface with empty boundary  in a
similar way as in the case of non-empty
boundary~\cite[\S3.4]{Amiot11}. It is the generalized cluster
category of Amiot~\cite{Amiot09} associated to the Jacobian algebra
corresponding to (any) ideal triangulation. In the case of a sphere
with $4$ punctures, this category is a tubular cluster category
studied by Barot and Geiss~\cite{BarotGeiss12}.

\begin{cor*}
Let $(S,M)$ be a surface with marked points and empty boundary. Then
there is a $\Hom$-finite triangulated $2$-Calabi-Yau category
$\cC_{(S,M)}$ with a cluster-tilted object for each ideal
triangulation.
\end{cor*}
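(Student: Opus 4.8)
The plan is to deduce the corollary from the main theorem together with Amiot's construction of generalized cluster categories \cite{Amiot09} and the compatibility between flips of triangulations and mutations of quivers with potentials.

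First I would fix a (tagged) ideal triangulation $\tau$ of $(S,M)$ and let $(Q_\tau,W_\tau)$ be the associated quiver with potential, defined using a fixed choice of non-zero scalars at the punctures --- subject, when $(S,M)$ is the $4$-punctured sphere, to the constraint that their product is not $1$. By the main theorem the completed Jacobian algebra $\widehat{\cP}(Q_\tau,W_\tau)$ is finite-dimensional, i.e.\ $(Q_\tau,W_\tau)$ is Jacobi-finite. I would then invoke Amiot's theorem: to a Jacobi-finite quiver with potential one associates its generalized cluster category $\cC_{(Q_\tau,W_\tau)}$, which is a $\Hom$-finite triangulated $2$-Calabi--Yau category containing a canonical cluster-tilting object $T_\tau$ with $\operatorname{End}(T_\tau)\cong\widehat{\cP}(Q_\tau,W_\tau)$. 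For the fixed $\tau$ this already produces a category of the required type carrying one distinguished cluster-tilting object.

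It remains to make the construction independent of $\tau$ and to produce a cluster-tilting object for \emph{every} triangulation. By Labardini-Fragoso \cite{Labardini09}, a flip of $\tau$ at an arc corresponds to the mutation of $(Q_\tau,W_\tau)$ at the corresponding vertex, and since flips fix the set of punctures the scalar data (and the constraint in the $4$-punctured sphere case) is unchanged; hence the hypotheses of the main theorem persist under flips, so every triangulation obtained from $\tau$ is again Jacobi-finite. By the theorem of Keller--Yang, mutation of a Jacobi-finite quiver with potential induces a triangle equivalence between the two generalized cluster categories that carries the canonical cluster-tilting object to the mutated one. As any two (tagged) ideal triangulations of a surface with empty boundary are linked by a finite sequence of flips (Fomin--Shapiro--Thurston \cite{FST08}), all the categories $\cC_{(Q_{\tau'},W_{\tau'})}$ are equivalent; I would call this common category $\cC_{(S,M)}$. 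Transporting each $T_{\tau'}$ along these equivalences into $\cC_{(S,M)}$ then yields the asserted cluster-tilting object associated to the triangulation $\tau'$.

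The genuine content is entirely in the main theorem; for the corollary the only point requiring real care is the passage to \emph{tagged} triangulations, needed because the flip graph of ordinary ideal triangulations of a closed surface need not be connected (self-folded triangles cannot always be flipped). One must therefore use Labardini-Fragoso's extension of the quiver-with-potential assignment and of the flip/mutation compatibility to tagged triangulations; but the underlying quivers with potentials are precisely the ones already handled by the main theorem, so no new finiteness issue arises, and the rest is a direct appeal to Amiot and to Keller--Yang.
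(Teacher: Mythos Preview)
Your approach is essentially the one indicated in the paper: the corollary is not proved in detail there, but is stated as a direct consequence of the main theorem via Amiot's generalized cluster category construction \cite{Amiot09}, exactly as you outline. Your added detail about Keller--Yang and the flip/mutation compatibility is the natural way to make the independence of the choice of triangulation precise, and is implicit in the paper's reference to \cite[\S3.4]{Amiot11}.

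One point deserves correction. You assert that the flip graph of ordinary ideal triangulations of a closed surface ``need not be connected (self-folded triangles cannot always be flipped)'', and that this forces a passage to tagged triangulations. This is not right: although the radius of a self-folded triangle is not flippable, the flip graph of ideal triangulations is nevertheless connected --- this is a classical result (Harer, Hatcher, Mosher), and indeed the paper itself relies on it when reducing the main theorem to a single ``nice'' triangulation. So for the corollary as stated (a cluster-tilting object for each \emph{ideal} triangulation) no detour through tagged triangulations is needed; your main argument already suffices. Tagged triangulations would only enter if one wanted a cluster-tilting object for every \emph{cluster} of the associated cluster algebra, which is a stronger statement than what is claimed here.
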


We outline our strategy for proving the theorem. Since the
properties of non-rigidity and finite-dimensionality of Jacobian
algebras are preserved under mutations of quivers with
potentials~\cite{DWZ08} and any two ideal triangulations of a
surface with marked points can be connected by a sequence of flips,
it suffices to consider only one triangulation. Therefore we can
avoid technical complications by dealing only with those
triangulations which are suitably ``nice''.

In Section~\ref{sec:model} we consider triangulations with at least
three arcs incident to every puncture and develop a combinatorial
model for the associated quiver with potential. In
Section~\ref{sec:star} we introduce the additional
conditions~\eqref{e:star} and~\eqref{e:diamond} on the quiver, and
we express them in terms of combinatorial properties of the
corresponding triangulation.

Then, in Section~\ref{sec:rels} we investigate the relations in the
Jacobian algebra of a quiver with potential within the framework of
our model. Some relations always hold, whereas additional relations
are obtained by assuming additional hypotheses involving either
condition~\eqref{e:star} or~\eqref{e:diamond}. Under these
hypotheses we carry out the actual proof in
Section~\ref{sec:algebra}, where we show that the potential is not
rigid (Proposition~\ref{p:nonrigid}) and the Jacobian algebra is
finite-dimensional (Proposition~\ref{p:findim}) and symmetric
(Proposition~\ref{p:symmetric}).

The existence of ``nice'' triangulations is shown in
Section~\ref{sec:exist}. It implies that for any surface with marked
points and empty boundary there is a triangulation whose associated
quiver with potential satisfies one of the conditions~\eqref{e:star}
or~\eqref{e:diamond}, thus allowing us to conclude the proof.

In addition we also compute the Cartan matrices and the centers of
the Jacobian algebras of the quivers with potentials considered in
Section~\ref{sec:algebra}. For the precise statements see
Proposition~\ref{p:Cartan}, Corollary~\ref{c:Cartan} and
Proposition~\ref{p:center}. In particular, the rank of the Cartan
matrix is bounded by the number of punctures, its determinant always
vanishes and the center is the quotient of a polynomial ring (with
as many variables as the arcs in the triangulation) by the ideal
generated by all monomials of degree $2$.

Since the property of a finite-dimensional algebra being symmetric,
as well as its center and the rank of its Cartan matrix are all
invariant under derived equivalence, the extension of the above
results to all the quivers with potentials arising from
triangulations of marked surfaces with empty boundary is now a
consequence of the following result:

\smallskip

\emph{All the Jacobian algebras associated to the ideal
triangulations of a given surface with marked points and empty
boundary are derived equivalent.}

\smallskip

We defer the proof of this result to a subsequent paper dealing with
(weakly) symmetric Jacobian algebras in a broader framework.

\section{Combinatorial model for the quiver with potential}
\label{sec:model}

Let $(S,M)$ be a closed surface with marked points. Recall that $S$
is a compact, connected, oriented Riemann surface with empty
boundary and $M$ is a finite set of points in $S$, called also
punctures. In this section we consider a fixed ideal triangulation
$T$ of $(S,M)$ with the property that
\begin{equation} \tag{T3} \label{e:T3}
\text{at each puncture $p \in M$ there are at least three arcs of
$T$ incident to $p$}
\end{equation}
(where an arc starting and ending at the same puncture is counted
twice). In particular, such a triangulation $T$ does not contain any
self-folded triangles. As we shall see in Section~\ref{sec:exist},
any marked closed surface has such a triangulation.

\subsection{The quiver}

Let $Q$ be the adjacency quiver of $T$ as defined by Fomin, Shapiro
and Thurston~\cite{FST08}. Recall that $Q$ is constructed in the
following way: its vertices are the arcs of $T$, and we add an arrow
from the arc $i$ to the arc $j$ if they are incident to a common
puncture $p$ and the arc $j$ immediately follows $i$ in the
counterclockwise order around $p$.
\[
\xymatrix@=1pc{
& & & & \\
{\vdots} & {_p\times} \ar@{-}@/_/[drrr]_i \ar@{-}@/^/[urrr]^j
\ar@{-}@/_/[ul] \ar@{-}@/^/[dl] \\
& & & &
}
\]

\begin{remark} \label{rem:2cycles}
\emph{A-priori}, this process may create $2$-cycles that then have
to be removed when forming the adjacency quiver. However, due to our
assumption~\eqref{e:T3}, this never happens.
\end{remark}

The next proposition lists some basic properties of the quiver $Q$
which will be crucial in our considerations. Denote by $Q_0$ the set
of vertices of $Q$ and by $Q_1$ the set of its arrows.

\begin{prop} \label{p:quiver}
Let $Q$ be the adjacency quiver of the triangulation $T$
satisfying~\eqref{e:T3}. Then:
\begin{enumerate}
\renewcommand{\theenumi}{\alph{enumi}}
\item \label{it:no2cycle}
$Q$ is connected, and there are no loops or $2$-cycles in $Q$.

\item \label{it:degs}
For any $i \in Q_0$, there are exactly two arrows in $Q_1$ starting at $i$ and
two arrows ending at $i$.

\item \label{it:fg_def}
There are invertible maps $f, g : Q_1 \to Q_1$ with the following properties:
\begin{itemize}
\item
For any $\alpha \in Q_1$, the set $\{f(\alpha), g(\alpha)\}$ consists of the
two arrows that start at the vertex which $\alpha$ ends at;
\item
$f^3$ is the identity on $Q_1$.
\end{itemize}
\end{enumerate}
\end{prop}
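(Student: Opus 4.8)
The plan is to reduce everything to a local analysis of the corners of $T$. Around a puncture $p$, the ends at $p$ of the arcs incident to $p$ (a loop at $p$ contributing two ends) are cyclically ordered by the counterclockwise orientation of $S$, and by~\eqref{e:T3} their number $d_p$ is at least $3$; between two cyclically consecutive such ends lies exactly one corner of a triangle of $T$ (so there are also $d_p$ corners at $p$), and by the defining rule for $Q$ this corner is responsible for exactly one arrow, namely from the arc of the first end to the arc of the second. Every arrow arises in this way, so the arrows of $Q$ are in bijection with the corners of $T$, and each arrow thus determines both a triangle and a puncture. I would also record two consequences of~\eqref{e:T3}: there are no self-folded triangles, so every triangle of $T$ has three distinct sides, its three corners lying at three distinct positions and contributing a $3$-cycle of arrows on those sides; and every arc of $T$ is a side of exactly two distinct triangles.

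Granting this model, part~\eqref{it:degs} is immediate: each of the two ends of an arc $i$ is the first side of exactly one corner (the next one counterclockwise) and the second side of exactly one corner (the previous one), so $i$ has exactly two outgoing and two incoming arrows. A loop at $i$ would be a corner both of whose sides are $i$, i.e.\ a triangle with a repeated side, hence a self-folded triangle, which~\eqref{e:T3} forbids. Connectedness of $Q$ follows because $S$ is connected (so the dual graph of $T$ is connected), the three sides of any triangle are pairwise joined by arrows of $Q$, and two triangles sharing an edge have that edge as a common vertex of $Q$; chaining these joins any two vertices of $Q$.

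The step I expect to require genuine care, and the only one where the topology of $S$ really enters, is the absence of $2$-cycles in~\eqref{it:no2cycle}. Suppose there were arrows $\alpha\colon i\to j$ and $\beta\colon j\to i$, coming from triangles $\sigma$ and $\tau$; then $\sigma$ and $\tau$ both have $i$ and $j$ among their sides. If $\sigma=\tau$, this triangle would carry corners $i\to j$ and $j\to i$, which is possible only if it has a repeated side — a self-folded triangle, impossible. So $\sigma\neq\tau$, and since each of $i$ and $j$ borders exactly two triangles, $\{\sigma,\tau\}$ is simultaneously the pair of triangles along $i$ and the pair along $j$. Now I would use that $S$ is oriented, so that $\sigma$ and $\tau$ induce opposite boundary orientations on each of the shared arcs $i$ and $j$: comparing orientations along $j$ forces the corner $\alpha$ and the corner $\beta$ to sit at the same puncture $p$ and to involve the same end of $j$, and comparing along $i$ forces them to involve the same end of $i$. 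Reading counterclockwise around $p$ one then sees only the end of $i$, then $\alpha$, then the end of $j$, then $\beta$, and back to the start — that is, $d_p=2$, contradicting~\eqref{e:T3}. Getting this orientation matching exactly right is the crux of the whole proposition.

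For part~\eqref{it:fg_def} I would use the two cyclic structures directly. Define $f$ to send the arrow contributed by a corner of a triangle to the arrow contributed by the next corner of that same triangle in its cyclic order of corners; then $f^3=\operatorname{id}$, so $f$ is invertible. Define $g$ to send the arrow contributed by a corner at a puncture $p$ to the arrow contributed by the next corner at $p$ counterclockwise; this is a product of one cyclic permutation per puncture, hence invertible. Given $\alpha\colon i\to j$ coming from a corner at $p$, the second side of that corner is one of the two ends of $j$, say $e$; the next corner at $p$ has $e$ as its first side, so $g(\alpha)$ is the arrow out of $j$ through $e$. Meanwhile $f(\alpha)$ is the next corner of the triangle of $\alpha$, which meets $j$ at its other end $e'$, so $f(\alpha)$ is the arrow out of $j$ through $e'$; in particular $f(\alpha)\neq g(\alpha)$, since they lie in different triangles. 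By part~\eqref{it:degs} the two arrows starting at $j$ are exactly those through $e$ and through $e'$, so $\{f(\alpha),g(\alpha)\}$ is precisely the set of arrows starting at the vertex $j$ at which $\alpha$ ends, as required.
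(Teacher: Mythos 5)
Your proposal is correct and follows essentially the same construction as the paper: $f$ is obtained by moving to the other corner of the triangle containing the corner of $\alpha$, $g$ by rotating counterclockwise around the puncture, and part~(b) comes from the two triangles (equivalently, the two ends) adjacent to each arc, so that $f(\alpha)$ and $g(\alpha)$ are exactly the two arrows leaving the terminal vertex of $\alpha$. The only substantive difference is that you spell out the orientation argument ruling out $2$-cycles --- the two corners of a putative $2$-cycle must involve the same ends of $i$ and $j$ at the same puncture, forcing valence $2$ and contradicting~\eqref{e:T3} --- a point the paper simply declares evident (cf.\ Remark~\ref{rem:2cycles}); your argument is sound, and the only thing to fix is the phrase ``next corner in its cyclic order of corners'' defining $f$, which should be taken in the rotational direction that makes $f(\alpha)$ start where $\alpha$ ends (as your subsequent description of $f(\alpha)$ through the other end $e'$ of $j$ indeed does), since the opposite convention would produce $f^{-1}$.
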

\begin{proof}
Part~\eqref{it:no2cycle} is evident from the construction.

To show~\eqref{it:degs}, observe that any arc $i$ is a side of
exactly two triangles of $T$, and each such triangle contributes one
arrow starting at $i$ and another ending at $i$.

For part~\eqref{it:fg_def}, we define the maps $f$ and $g$ as
follows. An arrow $\alpha$ corresponds to a pair $i$, $j$ of
consecutive arcs around a common puncture $p$, as in
Figure~\ref{fig:maps}, so that $\alpha$ starts at $i$ and ends at
$j$.

\begin{figure}
\begin{align*}
\begin{array}{c}
\xymatrix@=1pc{
& & & & {\times_q} \ar@{-}@/^/[dd]^k \\
{\vdots} & {_p\times} \ar@{-}@/_/[drrr]_i \ar@{-}@/^/[urrr]^j
\ar@{-}@/_/[ul]_{\ell} \ar@{-}@/^/[dl]^{\ell'} \\
& & & & {\times}
}
\end{array}
& &
\begin{array}{c}
\xymatrix@=0.8pc{
{\bullet_\ell} \\
&& {\bullet_j} \ar[ull]^{g(\alpha)} \ar[drr]^{f(\alpha)} \\
&& && {\bullet_k} \\
&& {\bullet_i} \ar[uu]^{\alpha}
}
\end{array}
\end{align*}
\caption{Definition of the maps $f$ and $g$ on the set of arrows.}
\label{fig:maps}
\end{figure}
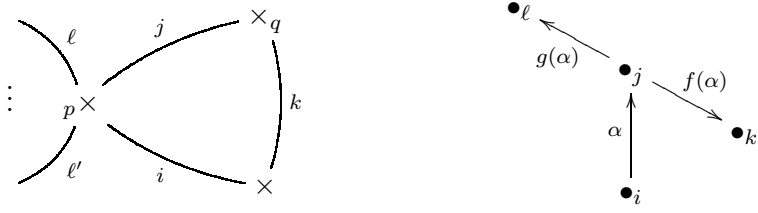

Let $\ell$ be the arc next to $j$ in the counterclockwise order
around $p$. We define $g(\alpha)$ to be the corresponding arrow $j
\to \ell$.

Let $q$ be the puncture at the other end of $j$ and let $k$ be the
arc next to $j$ in the counterclockwise order around $q$. We define
$f(\alpha)$ to be the corresponding arrow $j \to k$. Observe that
the arcs $i$, $j$, $k$ enclose a triangle of $T$, hence
$f^2(\alpha)$ is an arrow $k \to i$ and $f^3(\alpha)$ is the arrow
$\alpha$. In particular, the map $f$ is invertible.

We note that the puncture $q$ may coincide with the puncture $p$ so
that the arc $\ell$ may coincide with $k$, for example in a
triangulation of a once punctured torus. In this case both arrows
$f(\alpha)$ and $g(\alpha)$ start at $j$ and end at $k=\ell$.

Finally, the map $g$ is invertible; indeed, if $\ell'$ is the arc
immediately preceding $i$ in the counterclockwise order around $p$,
then by applying $g$ on the corresponding arrow $\ell' \to i$ we get
$\alpha$.
\end{proof}

Since the map $g$ is invertible, it induces a partition of the
arrows in $Q_1$ into \emph{$g$-orbits}, where the $g$-orbit of an
arrow $\alpha \in Q_1$ is by definition the set of all arrows of the
form $g^i(\alpha)$ for some $i \in \bZ$. Let $n_\alpha$ be the size
of the $g$-orbit of $\alpha$, that is,
\[
n_\alpha = \min \left\{ r \in \bZ_{>0} \,:\, g^r(\alpha)=\alpha
\right\}
\]
Obviously, the function $Q_1 \to \bZ_{>0}$ sending $\alpha$ to
$n_{\alpha}$ is constant on $g$-orbits.

Similarly, the invertible map $f$ induces a partition of the arrows
into $f$-orbits. Since the arrows $f(\alpha)$ and $g(\alpha)$ start
where $\alpha$ ends, the arrows of any $f$-orbit or $g$-orbit can be
arranged in a sequence whose concatenation is a cycle in $Q$.

The relations between these orbits and the triangulation are given
in the next lemma.

\begin{figure}
\begin{align*}
\begin{array}{c}
\xymatrix@=1pc{
& & & {\times} \ar@{-}@/^/[dd]^k \\
{\times} \ar@{-}@/_/[drrr]_i \ar@{-}@/^/[urrr]^j \\
& & & {\times}
}
\end{array}
& &
\begin{array}{c}
\xymatrix@=1pc{
& & & & \\
{\vdots} & {\times} \ar@{-}@/_/[drrr]^{i_0=i_n} \ar@{-}@/^/[urrr]^{i_1}
\ar@{-}@/_/[ul]_{i_2} \ar@{-}@/^/[dl]^{i_{n-1}} \\
& & & &
}
\end{array}
\end{align*}
\caption{Cycles in $Q$ arise in two ways: either from triangles of
$T$ (left) or from traversing the arcs around a puncture (right).}
\label{fig:cycles}
\end{figure}
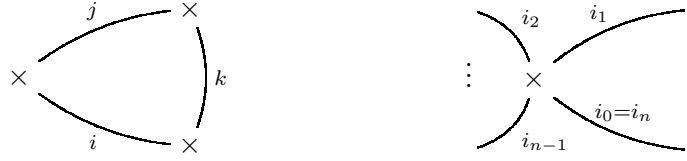

\begin{lemma} \label{l:orbits}
Let $f$, $g$ be the invertible maps corresponding to the
triangulation $T$.
\begin{enumerate}
\renewcommand{\theenumi}{\alph{enumi}}
\item
The $f$-orbits are of size $3$; they are in one-to-one
correspondence with the triangles of $T$.

\item
The $g$-orbits are of size at least $3$; they are in one-to-one
correspondence with the punctures.
\end{enumerate}
\end{lemma}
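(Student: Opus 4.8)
The plan is to read off both statements directly from the construction of the maps $f$ and $g$ in the proof of Proposition~\ref{p:quiver}, together with the assumption~\eqref{e:T3}. For part~(a), I already know from that proof that $f^3$ is the identity, so every $f$-orbit has size $1$ or $3$. An $f$-orbit of size $1$ would mean $f(\alpha)=\alpha$ for some arrow $\alpha\colon i\to j$; but $f(\alpha)$ starts at $j$, so this forces $i=j$, i.e.\ a loop in $Q$, which is excluded by Proposition~\ref{p:quiver}\eqref{it:no2cycle}. Hence every $f$-orbit has size exactly $3$. To get the bijection with triangles, recall from the construction that if $\alpha$ corresponds to the pair of consecutive arcs $i,j$ at a puncture $p$, and $q$ is the puncture at the other end of $j$ with $k$ the arc following $j$ around $q$, then $i,j,k$ bound a triangle $\Delta$ of $T$ and the orbit $\{\alpha, f(\alpha), f^2(\alpha)\}$ consists of the three arrows $i\to j\to k\to i$ running along the sides of $\Delta$. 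Conversely, a triangle with sides $i,j,k$ (in counterclockwise order) determines exactly this $3$-cycle of arrows. The only subtlety is well-definedness when two sides of a triangle coincide as arcs, i.e.\ when $\Delta$ is self-folded or has a repeated side; by~\eqref{e:T3} there are no self-folded triangles, and one should check that a triangle with two sides glued still contributes a genuine $3$-cycle (the map $f$ distinguishes the two arrows attached to such a repeated arc because they sit in different corners of $\Delta$). This gives the claimed one-to-one correspondence, and counting confirms it: there are $|Q_1|$ arrows, hence $|Q_1|/3$ $f$-orbits, and Euler-characteristic bookkeeping shows $T$ has exactly $|Q_1|/3$ triangles.

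For part~(b), the definition of $g$ shows that the $g$-orbit of an arrow associated to a pair of consecutive arcs at a puncture $p$ consists precisely of the arrows obtained by rotating through all consecutive pairs of arcs around $p$; concatenating them traverses the arcs $i_0, i_1, \dots, i_{n-1}, i_n=i_0$ around $p$ in counterclockwise order, as in the right-hand picture of Figure~\ref{fig:cycles}. Thus each $g$-orbit is attached to a unique puncture, and every puncture gives rise to exactly one $g$-orbit (every arc incident to $p$ appears, and each consecutive pair contributes one arrow of the orbit). The size $n_\alpha$ of this orbit equals the number of arcs incident to $p$ (counted with multiplicity, an arc with both ends at $p$ counted twice), which by~\eqref{e:T3} is at least $3$. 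Hence the $g$-orbits are in bijection with the punctures and all have size $\geq 3$.

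The main thing to be careful about — the only place where this is more than bookkeeping — is the identification of $f$-orbits with triangles in the degenerate situations where a triangle of $T$ has two (or conceivably all three) of its sides identified as arcs of $T$, which does occur on closed surfaces even under~\eqref{e:T3} (e.g.\ on a once-punctured torus). There one must verify that $f$ still permutes the three corner-arrows of such a triangle cyclically and that distinct triangles yield distinct $f$-orbits; this is handled by remembering that an arrow records not just a pair of arcs but a corner (a consecutive pair at a specific puncture), so the ``two arrows at a repeated arc'' are genuinely different arrows living in different corners of the triangle. Once this is pinned down, both parts follow immediately.
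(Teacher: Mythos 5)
Your proof is correct and follows essentially the same route as the paper: use $f^3=\mathrm{id}$ plus the absence of loops to rule out fixed points of $f$, identify each $f$-orbit with the $3$-cycle of arrows along a triangle, and identify each $g$-orbit with the cycle of arrows obtained by traversing the arcs around a puncture, whose length is at least $3$ by~\eqref{e:T3}. The extra care you take with triangles having repeated sides (arrows record corners, not just pairs of arcs) is a sound refinement of the same argument, which the paper leaves implicit.
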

\begin{proof}
Since $f^3=id$, the $f$-orbits are of size $1$ or $3$. The first
case is impossible since always $f(\alpha) \neq \alpha$ as these
arrows start at different vertices.

Any triangle in $T$ with sides $i,j,k$ arranged in a clockwise order
as in the left drawing of Figure~\ref{fig:cycles} gives rise to a
$3$-cycle $i \to j \to k \to i$ in $Q$ which can be written as
\begin{equation} \label{e:fcycle}
\alpha \cdot f(\alpha) \cdot f^2(\alpha) .
\end{equation}
The arrows $\{\alpha, f(\alpha), f^2(\alpha)\}$ form an $f$-orbit
and any $f$-orbit is obtained in this way.

Fix a puncture, and let $i_0, i_1, \dots, i_{n-1}, i_n=i_0$ be the
sequence of arcs incident to that puncture traversed in a
counterclockwise order, as in the right drawing of
Figure~\ref{fig:cycles}. We get a cycle $i_0 \to i_1 \to \dots \to
i_{n-1} \to i_0$ in $Q$ which by construction of the map $g$ can be
written as a path
\begin{equation} \label{e:gcycle}
\beta \cdot g(\beta) \cdot \ldots \cdot g^{n-1}(\beta)
\end{equation}
whose arrows form a $g$-orbit. Moreover, any $g$-orbit is obtained
in this way.
\end{proof}

\subsection{The potential}

In~\cite{Labardini09} Labardini associates to an ideal triangulation
of a marked bordered surface a quiver with potential, using
auxiliary data consisting of a non-zero scalar for every puncture
(from a fixed field $K$). In the case of a triangulation of a marked
closed surface satisfying~\eqref{e:T3}, by the correspondence of
Lemma~\ref{l:orbits} between the punctures and the $g$-orbits on the
set of arrows $Q_1$ in the adjacency quiver, we may view the
auxiliary data as a function $c : Q_1 \to K^{\times}$ which is
constant on $g$-orbits.

Recall that a potential on a quiver $Q$ is a (possibly infinite)
linear combination of cycles in the complete path algebra
$\widehat{KQ}$ of $Q$. An explicit form of the potential associated
to the triangulation $T$ in terms of the combinatorics of its
adjacency quiver exploited in Proposition~\ref{p:quiver} is provided
by the next proposition.

\begin{prop} \label{p:potential}
Let $(Q,W)$ be the quiver with potential associated to $T$. Then the
quiver $Q$ is the adjacency quiver of $T$ described above and the
potential $W$ is given by the formula
\begin{equation} \label{e:W}
W =  \sum \alpha \cdot f(\alpha) \cdot f^2(\alpha) - \sum c_\beta
\beta \cdot g(\beta) \cdot \ldots \cdot g^{n_\beta-1}(\beta)
\end{equation}
where the first sum is taken over representatives $\alpha$ of the
$f$-orbits in $Q_1$ and the second sum is taken over representatives
$\beta$ of $g$-orbits in $Q_1$.
\end{prop}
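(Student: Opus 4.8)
The plan is to unwind the definition of Labardini-Fragoso's potential from~\cite{Labardini09} and match each of its summands with the two kinds of cycles in the adjacency quiver $Q$ that were identified in Lemma~\ref{l:orbits}. Recall that in the bordered case the potential is a sum of two parts: one part built from the triangles of $T$ (the ``$\widehat{W}$'' term, contributing a cube-type $3$-cycle for every non-self-folded triangle), and another part built from the punctures (one ``big cycle'' per puncture, weighted by the chosen scalar). Since~\eqref{e:T3} guarantees that $T$ has no self-folded triangles, every triangle contributes exactly one $3$-cycle and there are no correction terms from self-folded triangles; moreover no puncture is a ``boundary-type'' special case, so the puncture part is simply a single cycle for every $p \in M$.

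First I would make precise the translation between Labardini-Fragoso's bookkeeping of arrows (which he labels by the triangles and the ``sides'' they share) and the maps $f$ and $g$ of Proposition~\ref{p:quiver}. The key observation is that, for an arrow $\alpha : i \to j$, the arrow $f(\alpha) : j \to k$ is precisely the ``next arrow in the same triangle'' of $T$, so that $\alpha \cdot f(\alpha) \cdot f^2(\alpha)$ is exactly the oriented $3$-cycle that Labardini's construction attaches to that triangle, as already spelled out in~\eqref{e:fcycle}. Summing over representatives of the $f$-orbits gives, by Lemma~\ref{l:orbits}(a), exactly one term per triangle — this is the first sum in~\eqref{e:W}. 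Dually, the arrow $g(\alpha) : j \to \ell$ is the ``next arrow around the puncture $p$'', so by~\eqref{e:gcycle} the product $\beta \cdot g(\beta) \cdots g^{n_\beta - 1}(\beta)$ is the full cycle obtained by traversing all arcs incident to a puncture once in counterclockwise order; Lemma~\ref{l:orbits}(b) says these are in bijection with $M$, giving the second sum. Finally I would check that the scalar coefficients agree: Labardini attaches the chosen nonzero scalar $x_p$ to the puncture cycle at $p$, which under our identification of the auxiliary data with a $g$-orbit-constant function $c : Q_1 \to K^\times$ is exactly $c_\beta$, and the sign is the minus sign appearing in his formula. One should also confirm that the potential is taken only up to cyclic equivalence, so that the choice of representatives $\alpha$, $\beta$ of the orbits does not matter.

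The main obstacle is purely one of reconciling conventions: Labardini-Fragoso's definition distinguishes several cases (self-folded triangles, punctures lying in a self-folded triangle, arcs enclosing a once-punctured monogon, the orientation conventions for $Q$), and one has to verify carefully that under hypothesis~\eqref{e:T3} all the degenerate cases are vacuous, so that his general formula collapses to the clean expression~\eqref{e:W}. In particular I would point out that condition~\eqref{e:T3} rules out self-folded triangles (as already noted after~\eqref{e:T3}) and once-punctured monogons, and that Remark~\ref{rem:2cycles} ensures no arrows are deleted in passing to the adjacency quiver, so the arrow set of $Q$ is literally in bijection with the arrows appearing in Labardini's potential. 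Once these case distinctions are dispatched, the identification of the two sums is a direct comparison of~\eqref{e:fcycle} and~\eqref{e:gcycle} with the two families of cycles in~\cite{Labardini09}, and the proof is complete.

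Alternatively, since flips of triangulations correspond to mutations of quivers with potentials and the formula~\eqref{e:W} is manifestly preserved in form under the combinatorial operations on $f$- and $g$-orbits that a flip induces, one could in principle reduce to a single well-understood triangulation; but for closed surfaces this would require first establishing that~\eqref{e:W} transforms correctly under mutation, which is more work than the direct verification, so I would present the direct argument above as the proof.
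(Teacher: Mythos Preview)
Your proposal is correct and follows essentially the same route as the paper: use~\eqref{e:T3} and Remark~\ref{rem:2cycles} to see that no self-folded triangles occur and no $2$-cycles are deleted, so the quiver is literally the adjacency quiver, and then invoke Lemma~\ref{l:orbits} to identify Labardini-Fragoso's triangle-cycles with the $f$-orbit $3$-cycles~\eqref{e:fcycle} and his puncture-cycles with the $g$-orbit cycles~\eqref{e:gcycle}. The paper's version is simply terser, omitting the explicit discussion of conventions, scalar bookkeeping, and the alternative mutation approach that you (rightly) dismiss.
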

\begin{proof}
The triangulation $T$ satisfies~\eqref{e:T3}, hence it does not
contain self-folded triangles and moreover in the formation of the
adjacency quiver no $2$-cycles had to be removed (see
Remark~\ref{rem:2cycles}). Therefore no reduction is needed which
means that the associated quiver is identical to the adjacency
quiver $Q$ of $T$ described above.

The associated potential $W$ is by definition the sum of all
$3$-cycles in $Q$ corresponding to the triangles of $T$ together
with scalar multiples of the cycles of $Q$ ``around'' each puncture
(see again Figure~\ref{fig:cycles}). By Lemma~\ref{l:orbits} and its
proof, these cycles are precisely of the forms~\eqref{e:fcycle}
and~\eqref{e:gcycle} corresponding to the $f$-orbits and $g$-orbits,
respectively.
\end{proof}

\subsection{The conditions ($\star$) and ($\diamond$)}
\label{sec:star}

In order to prove the finite-dimensionality of the Jacobian algebra
of $(Q,W)$ in full generality we need to introduce a mild condition
on the quiver $Q$ concerning the size of its $g$-orbits. This
condition is stated as follows:
\begin{equation} \tag{$\star$} \label{e:star}
\text{For any $\alpha \in Q_1$ we have $n_\alpha \geq 4$ or
$n_{f(\alpha)} \geq 4$}.
\end{equation}

Let $T$ be an ideal triangulation satisfying~\eqref{e:T3}. Then by
Lemma~\ref{l:orbits} the size of any $g$-orbit is at least $3$, that
is, $n_\alpha \geq 3$ for any $\alpha \in Q_1$. The
condition~\eqref{e:star} just says that there are not too many
$g$-orbits containing just $3$ arrows; in other words, there are not
too many punctures with just three arcs around them.

As the next lemma shows, the following condition on a triangulation
$T$ guarantees that its adjacency quiver satisfies~\eqref{e:star}:
\begin{equation} \tag{T3$\half$} \label{e:T3h}
\text{$T$ has~\eqref{e:T3} and any arc has an endpoint with at least
four arcs incident to it.}
\end{equation}

\begin{lemma} \label{l:T3h}
Let $T$ be a triangulation with property~\eqref{e:T3}. Then
condition~\eqref{e:star} is satisfied for its adjacency quiver if
and only if $T$ has property~\eqref{e:T3h}.
\end{lemma}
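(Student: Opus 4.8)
The plan is to unwind both conditions into purely local statements about arcs and then match them. Recall from Lemma~\ref{l:orbits} that the $g$-orbits of the adjacency quiver are in bijection with the punctures, and that the size $n_\alpha$ of the $g$-orbit through an arrow $\alpha$ equals the number of arcs incident to the puncture at which $\alpha$ is based (counting a loop twice). So the hypothesis $n_\alpha \geq 4$ says precisely that the starting arc $i$ of $\alpha$ has an endpoint $p$ — the one around which $i$ and $j = g^{-1}(\cdot)$-neighbor sit — with at least four incident arcs.

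Next I would translate the quantifier in~\eqref{e:star}. Fix an arc $j$; it has two endpoints, say $p$ and $q$ (possibly equal). By Proposition~\ref{p:quiver}\eqref{it:fg_def} there are exactly two arrows starting at $j$, namely $f(\alpha)$ and $g(\alpha)$ for a suitable $\alpha$ ending at $j$; in the notation of Figure~\ref{fig:maps}, $g(\alpha)$ is based at $p$ and $f(\alpha)$ is based at $q$ (the other endpoint of $j$). Crucially, $n_{g(\alpha)} = n_\alpha$ since $g(\alpha)$ lies in the same $g$-orbit as $\alpha$. Hence the clause ``$n_\alpha \geq 4$ or $n_{f(\alpha)} \geq 4$'' for the arrow $\alpha$ ending at $j$ is equivalent to saying: $p$ has at least four incident arcs, or $q$ has at least four incident arcs — that is, the arc $j$ has an endpoint with at least four incident arcs. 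Running over all arrows $\alpha \in Q_1$ is the same as running over all arcs $j$ appearing as a target, which by Proposition~\ref{p:quiver}\eqref{it:degs} is every arc; so~\eqref{e:star} holds for the adjacency quiver if and only if every arc has an endpoint with at least four incident arcs, which together with the standing assumption~\eqref{e:T3} is exactly~\eqref{e:T3h}.

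I would present this as a short chain of ``if and only if'' steps, being careful about the degenerate case $p = q$ (the loop case, as in the once-punctured torus): there $n_\alpha = n_{f(\alpha)}$ automatically, both equal the single relevant orbit size, and the statement ``$j$ has an endpoint with $\geq 4$ incident arcs'' reads correctly as ``the unique puncture has $\geq 4$ incident arcs.'' The one genuine point requiring care — and the only real obstacle — is pinning down which $g$-orbit the two arrows out of $j$ belong to, i.e.\ confirming that exactly one of $f(\alpha), g(\alpha)$ is based at each endpoint of $j$; this is immediate from the construction of $f$ and $g$ in the proof of Proposition~\ref{p:quiver} (where $g(\alpha)$ is the arrow $j \to \ell$ with $\ell$ next to $j$ around $p$, and $f(\alpha)$ is the arrow $j \to k$ with $k$ next to $j$ around the other end $q$), so no new work is needed. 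Everything else is bookkeeping with the bijections already established in Lemma~\ref{l:orbits}.
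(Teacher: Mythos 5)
Your argument is correct and is essentially the paper's proof: both hinge on the observation (from Lemma~\ref{l:orbits} and the construction of $f$ and $g$ in Proposition~\ref{p:quiver}) that the $g$-orbits of $\alpha$ and $f(\alpha)$ correspond to the two endpoints of the arc at which $\alpha$ ends, so that the clause in~\eqref{e:star} translates, arc by arc, into the condition in~\eqref{e:T3h}. The only difference is presentational — you run a single chain of equivalences where the paper argues the two implications (the converse via the contrapositive) — which does not change the substance.
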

\begin{proof}
Assume that $T$ has property~\eqref{e:T3h}. Let $\alpha \in Q_1$ and
set $\beta=f(\alpha)$. Then $\alpha$ ends at some vertex $j$ where
$\beta$ starts at. The endpoints of the arc $j$ in the triangulation
correspond to the $g$-orbits of $\alpha$ and of $\beta$ (which may
coincide). Now the condition~\eqref{e:T3h} together with
Lemma~\ref{l:orbits} imply that at least one of these orbits
contains at least four arrows.

Conversely, assume that $T$ does not have property~\eqref{e:T3h} and
let $j$ be an arc such that both of its endpoints have only three
incident arcs. Take an arrow $\alpha \in Q_1$ ending at the vertex
$j$. Then $n_\alpha = n_{f(\alpha)} = 3$.
\end{proof}

It is much easier to verify the following property~\eqref{e:T4} of a
triangulation $T$, which obviously implies the
property~\eqref{e:T3h}:
\begin{equation} \tag{T4} \label{e:T4}
\text{at each puncture $p \in M$ there are at least four arcs of $T$
incident to $p$.}
\end{equation}

Indeed, in Section~\ref{sec:exist} we will prove that with only two
exceptions, namely the sphere with $4$ or $5$ punctures, any marked
closed surface has a triangulation with the property~\eqref{e:T4},
and that the sphere with $5$ punctures has a triangulation with
property~\eqref{e:T3h}.

However, the sphere with $4$ punctures does not have a triangulation
with property~\eqref{e:T3h}, so that an argument involving the
condition~\eqref{e:star} for the adjacency quiver of a triangulation
satisfying~\eqref{e:T3} would not be applicable. In order to deal
with this particular case, we replace the condition~\eqref{e:star}
by the condition~\eqref{e:diamond} on the $g$-orbits stated as
follows:
\begin{equation} \tag{$\diamond$} \label{e:diamond}
\text{For any $\alpha \in Q_1$ we have $n_\alpha=3$}
\end{equation}
(or equivalently, $g^3=id$ on $Q_1$) which holds for any
triangulation of a sphere with $4$ punctures having
property~\eqref{e:T3}. Under the additional assumption that the
product of the scalars associated to the punctures is not equal to
$1$, we are able to prove the finite-dimensionality in this case as
well by using similar techniques.

\section{Relations in the Jacobian algebra}
\label{sec:rels}

In this section we consider quivers with potential $(Q,W)$ of the
following form: $Q$ is any quiver with the combinatorial properties
described in Proposition~\ref{p:quiver}, and $W$ is the potential
given by the formula~\eqref{e:W} in the statement of
Proposition~\ref{p:potential}. As shown in the previous section,
this includes in particular the quivers with potential associated to
triangulations of a marked closed surface which have
property~\eqref{e:T3}.

\subsection{$\PSL_2(\bZ)$-action on the quiver}

In view of Proposition~\ref{p:quiver}\eqref{it:degs} we can make the
following definition.
\begin{defn}
For an arrow $\alpha \in Q_1$, denote by $\balpha$ the other arrow
starting at the same vertex as $\alpha$.
\end{defn}

In the next lemma we record the basic relations between the
functions $f$, $g$ and $\bar{\ }$.

\begin{lemma} \label{l:bar}
Let $\alpha \in Q_1$.
\begin{enumerate}
\renewcommand{\theenumi}{\alph{enumi}}
\item
The set $\{f^{-1}(\alpha), g^{-1}(\alpha)\}$ consists of the two
arrows that end at the vertex which $\alpha$ starts at.

\item
$\overline{f(\alpha)}=g(\alpha)$ and $\overline{g(\alpha)} =
f(\alpha)$.

\item
$gf^{-1}(\alpha) = fg^{-1}(\alpha) = \balpha$.

\item
$f^{-1}(\balpha) = g^{-1}(\alpha)$ and $g^{-1}(\balpha) =
f^{-1}(\alpha)$.

\item
$f^{-1}g(\alpha) = g^{-1}f(\alpha)$ and is equal the other arrow
ending at the same vertex as $\alpha$.
\end{enumerate}
\end{lemma}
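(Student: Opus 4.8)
The plan is to derive all five identities from part~(a), which itself follows purely from the injectivity of $f$ and $g$ together with Proposition~\ref{p:quiver}\eqref{it:degs}; no input from the surface is used, only the combinatorial properties in Proposition~\ref{p:quiver}. First observe that for every $\alpha$ the two arrows $f(\alpha)$ and $g(\alpha)$ are distinct, being precisely the two arrows (in the sense of Proposition~\ref{p:quiver}\eqref{it:degs}) that start at the vertex where $\alpha$ ends; we will use this repeatedly to guarantee that "the other arrow" is well defined and that a given deduction picks out the correct element of a two-element set.

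For~(a), let $v$ be the vertex at which $\alpha$ starts, so that the two arrows starting at $v$ are $\alpha$ and $\balpha$, and let $\gamma_1 \neq \gamma_2$ be the two arrows ending at $v$, which exist by Proposition~\ref{p:quiver}\eqref{it:degs}. For each $i$ the arrow $\gamma_i$ ends at $v$, so $\{f(\gamma_i), g(\gamma_i)\} = \{\alpha, \balpha\}$, and hence exactly one of $f(\gamma_i), g(\gamma_i)$ equals $\alpha$. Since $f$ is injective, $f(\gamma_i) = \alpha$ for at most one $i$, and likewise for $g$; therefore, after relabelling, $f(\gamma_1) = \alpha$ and $g(\gamma_2) = \alpha$, i.e. $\gamma_1 = f^{-1}(\alpha)$ and $\gamma_2 = g^{-1}(\alpha)$. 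This proves~(a), and also that $f^{-1}(\alpha) \neq g^{-1}(\alpha)$.

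The remaining parts are then formal. Part~(b) merely restates that $f(\alpha)$ and $g(\alpha)$ are the two distinct arrows at the vertex where $\alpha$ ends, so each is the $\bar{\ }$ of the other. For~(c), apply the defining property of $f, g$ to the arrow $f^{-1}(\alpha)$, which by~(a) ends at the vertex where $\alpha$ starts: then $\{\alpha, gf^{-1}(\alpha)\} = \{ff^{-1}(\alpha), gf^{-1}(\alpha)\} = \{\alpha, \balpha\}$, and since these two arrows are distinct we get $gf^{-1}(\alpha) = \balpha$; the same argument with $g^{-1}(\alpha)$ gives $fg^{-1}(\alpha) = \balpha$. Part~(d) is obtained from~(c) by applying $f^{-1}$ and $g^{-1}$: from $gf^{-1}(\alpha) = \balpha$ we get $f^{-1}(\alpha) = g^{-1}(\balpha)$, and from $fg^{-1}(\alpha) = \balpha$ we get $g^{-1}(\alpha) = f^{-1}(\balpha)$. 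Finally, for~(e), let $w$ be the vertex where $\alpha$ ends; both $f(\alpha)$ and $g(\alpha)$ start at $w$, so applying~(a) to each of them shows that $\{\alpha, g^{-1}f(\alpha)\}$ and $\{f^{-1}g(\alpha), \alpha\}$ are both equal to the set of the two arrows ending at $w$. Since $g^{-1}f(\alpha) \neq \alpha$ and $f^{-1}g(\alpha) \neq \alpha$ (again because $f(\alpha) \neq g(\alpha)$), we conclude $f^{-1}g(\alpha) = g^{-1}f(\alpha)$ and that this common arrow is the unique arrow other than $\alpha$ ending at $w$.

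I do not expect any genuine obstacle: the whole lemma is elementary bookkeeping with the maps $f$, $g$ and $\bar{\ }$. The only points needing care are to invoke the injectivity of $f$ and $g$ (rather than $f^3 = \mathrm{id}$, which is not needed anywhere) and to track the distinctness of the two arrows at each vertex throughout, since that is what lets each step select the correct element of a two-element set.
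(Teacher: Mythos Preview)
Your proof is correct and follows the same route as the paper's: both derive all five parts purely from the combinatorial properties of $f$ and $g$ listed in Proposition~\ref{p:quiver}, with part~(a) as the key step and the remaining parts following by elementary manipulations. The paper's proof is a brief sketch (arguing only that $f^{-1}(\alpha)\neq g^{-1}(\alpha)$ and leaving the rest to a picture), whereas you have written out the deductions in full; your remark that $f^3=\mathrm{id}$ is never used is also accurate.
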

\begin{proof}
All these claims follow from the properties of the maps $f$ and $g$
described in Proposition~\ref{p:quiver}\eqref{it:fg_def}. For
example, both arrows $f^{-1}(\alpha)$ and $g^{-1}(\alpha)$ end at
the vertex which $\alpha$ starts at. If they were identical, then
applying $f$ or $g$ would give the same arrow, namely $\alpha$, a
contradiction.

The other statements follow similarly. They are best illustrated in
the following pictures.
\begin{align*}
\xymatrix@=0.8pc{
{\bullet} \ar[ddr]_{f^{-1}(\alpha)=g^{-1}(\balpha)} && {\bullet} \\ \\
& {\bullet} \ar[uur]_{\alpha}
\ar[ddr]^{\balpha=gf^{-1}(\alpha)=fg^{-1}(\alpha)} \\ \\
{\bullet} \ar[uur]^{g^{-1}(\alpha)=f^{-1}(\balpha)} && {\bullet}
}
&&
\xymatrix@=0.8pc{
{\bullet} \ar[ddr]_{f^{-1}g(\alpha)=g^{-1}f(\alpha)} && {\bullet} \\ \\
& {\bullet} \ar[uur]_{f(\alpha)} \ar[ddr]^{g(\alpha)} \\ \\
{\bullet} \ar[uur]^{\alpha} && {\bullet}
}
\end{align*}
\end{proof}

\begin{prop} \label{p:PSL2}
The group $\PSL_2(\bZ)$ acts transitively on the set of arrows $Q_1$
and its subgroup consisting of all the elements acting trivially is
normal of finite index.
\end{prop}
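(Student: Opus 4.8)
The plan is to build the action directly from the generators $f$ and $\bar{\ }$, using the classical presentation $\PSL_2(\bZ)\cong \bZ/2\bZ * \bZ/3\bZ=\langle s,t\mid s^2=t^3=1\rangle$ of the modular group as a free product. By Proposition~\ref{p:quiver}(c) the map $f\colon Q_1\to Q_1$ satisfies $f^3=\mathrm{id}$, and since every vertex of $Q$ has exactly two outgoing arrows (Proposition~\ref{p:quiver}(b)) the map $\sigma\colon\alpha\mapsto\balpha$ is an involution of $Q_1$. Hence, by the universal property of the free product, $s\mapsto\sigma$ and $t\mapsto f$ extend to a group homomorphism $\vphi\colon\PSL_2(\bZ)\to\mathrm{Sym}(Q_1)$; no relations beyond $s^2=t^3=1$ need to be checked. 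Write $G=\vphi(\PSL_2(\bZ))=\langle\sigma,f\rangle\leq\mathrm{Sym}(Q_1)$. By Lemma~\ref{l:bar}(c) we have $\sigma=g\circ f^{-1}$, so $g=\sigma\circ f\in G$, and therefore $G$ is closed under $f^{\pm1}$, $g^{\pm1}$ and $\sigma$.

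Next I would prove transitivity of this action. Let $O\subseteq Q_1$ be a $G$-orbit and let $V\subseteq Q_0$ be the set of sources of the arrows in $O$. If $\alpha\in O$ has source $i$ and target $j$, then $\balpha=\sigma(\alpha)\in O$ also starts at $i$, and by Proposition~\ref{p:quiver}(b) the arrows $\alpha,\balpha$ are exactly the two arrows leaving $i$; likewise $f(\alpha)\in O$ and $g(\alpha)=\overline{f(\alpha)}\in O$ (Lemma~\ref{l:bar}(b)) are, by Proposition~\ref{p:quiver}(c), exactly the two arrows leaving $j$. Thus for every $i\in V$ both arrows out of $i$ belong to $O$, and their targets lie in $V$, so $V$ is closed under passing to out-neighbours in the underlying graph of $Q$. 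Moreover, by Lemma~\ref{l:bar}(a) the two arrows ending at $i$ are $f^{-1}(\alpha)$ and $g^{-1}(\alpha)$, which lie in $O$ since $G$ contains $f^{-1}$ and $g^{-1}$; hence their sources lie in $V$, so $V$ is closed under passing to in-neighbours as well. Since $Q$ is connected (Proposition~\ref{p:quiver}(a)) and every vertex has an outgoing arrow, it follows that $V=Q_0$ whenever $O\neq\emptyset$, and then every arrow of $Q$---being an arrow out of a vertex of $V$---lies in $O$. Hence $O=Q_1$, so there is a single orbit and the $\PSL_2(\bZ)$-action is transitive.

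Finally, the subgroup of $\PSL_2(\bZ)$ consisting of the elements acting trivially is precisely $\ker\vphi$, which is automatically normal, and $\PSL_2(\bZ)/\ker\vphi\cong G$ is finite because $G$ is a subgroup of the finite group $\mathrm{Sym}(Q_1)$; thus $\ker\vphi$ has finite index. The only step requiring genuine care is the transitivity argument, i.e.\ checking that the vertex set $V$ is stable under moving along arrows in both directions; once this combinatorial bookkeeping is set up, connectedness of $Q$ closes the argument and the assertions about the kernel are purely formal.
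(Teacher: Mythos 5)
Your proof is correct and follows essentially the same route as the paper: you realize the action through the involution $\alpha\mapsto\balpha$ together with an order-three generator (you send the order-three generator to $f$, while the paper uses the presentation $x^2=(xy)^3=1$ with $y$ acting as $g$ and $xy$ as $f$ — the resulting permutation group $\langle\,\bar{\ },f\,\rangle=\langle\,\bar{\ },g\,\rangle$ is the same), and you deduce transitivity from the connectedness of $Q$, with the normality and finite-index claims being formal consequences of having a homomorphism to the finite group $\mathrm{Sym}(Q_1)$. Your closure argument for the vertex set $V$ under in- and out-neighbours is simply a spelled-out version of the paper's one-line observation that any arrow starting or ending at a vertex touched by $\alpha$ lies in the orbit of $\alpha$.
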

\begin{proof}
The group $\PSL_2(\bZ)$ has a presentation by two generators $x, y$
and relations $x^2 = (xy)^3 = 1$. Its action on $Q_1$ is obtained by
letting $x, y$ act on an arrow $\alpha \in Q_1$ via
\begin{align*}
x(\alpha) = \balpha &,& y(\alpha) = g(\alpha)
\end{align*}
and noting that $(xy)(\alpha)=f(\alpha)$ by the previous lemma.

Observe that any arrow starting or ending at a vertex which $\alpha$
starts or ends at belongs to the $\PSL_2(\bZ)$-orbit of $\alpha$.
Since $Q$ is connected, this implies that the action is transitive.
\end{proof}

\begin{lemma} \label{l:fgbar}
Let $\alpha \in Q_1$, and let $i$, $j$, $k$ be the starting vertices
of the arrows $\alpha$, $f(\alpha)$ and $f^2(\alpha)$, respectively.
\begin{enumerate}
\renewcommand{\theenumi}{\alph{enumi}}
\item
The three vertices $i$, $j$, $k$ are different and the six arrows
$\alpha$, $\balpha$, $f(\alpha)$, $g(\alpha)$, $f^2(\alpha)$,
$gf(\alpha)$ are all distinct.

\item \label{it:f2}
$f^2(\alpha) = g^{n_{\balpha}-1}(\balpha)$.

\item \label{it:gf}
$gf(\alpha) = fg^{n_{\balpha}-2}(\balpha)$.
\end{enumerate}
\end{lemma}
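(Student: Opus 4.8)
The strategy is to work entirely within the combinatorial framework of Proposition~\ref{p:quiver} and Lemma~\ref{l:bar}, reducing each assertion to a short chain of identities among the maps $f$, $g$ and $\bar{\ }$, supplemented by a single geometric observation coming from the triangulation (namely, that the triangle enclosed by the arcs through $i$, $j$, $k$ is a genuine triangle with three distinct sides). First I would establish the statement in part~(a) about the three vertices $i$, $j$, $k$ being distinct: by Proposition~\ref{p:quiver}\eqref{it:fg_def} the arrows $\alpha$, $f(\alpha)$, $f^2(\alpha)$ form an $f$-orbit, which by Lemma~\ref{l:orbits} corresponds to a triangle of $T$; since $T$ has no self-folded triangles (a consequence of~\eqref{e:T3}), its three sides are distinct arcs, so $i$, $j$, $k$ are distinct vertices of $Q$. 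Alternatively, and without invoking the surface, one can argue abstractly: if two of these vertices coincided one would get a loop or a $2$-cycle in $Q$, contradicting Proposition~\ref{p:quiver}\eqref{it:no2cycle}.

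Next, for the six arrows in part~(a): $\alpha$ and $\balpha$ start at $i$; $f(\alpha)$ and $g(\alpha)$ start at $j$ (they are precisely the two arrows starting at the endpoint of $\alpha$, by Proposition~\ref{p:quiver}\eqref{it:fg_def} and the definition of $g$); and $f^2(\alpha)$ and $gf(\alpha)$ start at $k$. Since $i$, $j$, $k$ are distinct, arrows from different groups cannot coincide, and within each group the two arrows are distinct by definition of $\bar{\ }$ together with Lemma~\ref{l:bar}(b) (which identifies $\overline{f(\alpha)}=g(\alpha)$, etc.). This disposes of part~(a).

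For part~(b), I would use Lemma~\ref{l:bar}: the arrow $g^{n_{\balpha}-1}(\balpha)$ is the arrow immediately preceding $\balpha$ in its $g$-orbit, i.e.\ $g^{-1}(\balpha)$, which by Lemma~\ref{l:bar}(d) equals $f^{-1}(\alpha)$; applying $f^{3}=\mathrm{id}$ gives $f^{-1}(\alpha)=f^{2}(\alpha)$, yielding the claimed equality. For part~(c), starting from~(b) one computes $g f(\alpha) = g f\bigl(g^{n_{\balpha}-1}(\balpha)\bigr)$, but it is cleaner to argue: $fg^{n_{\balpha}-2}(\balpha) = f g^{-2}(\balpha) = f g^{-1}\bigl(g^{-1}(\balpha)\bigr) = f g^{-1}\bigl(f^{-1}(\alpha)\bigr)$ using~(b)'s intermediate identity $g^{-1}(\balpha)=f^{-1}(\alpha)$, and then $f g^{-1} = \overline{(\ \cdot\ )}$ composed appropriately via Lemma~\ref{l:bar}(c), namely $fg^{-1}(f^{-1}(\alpha)) = \overline{f^{-1}(\alpha)}$; finally Lemma~\ref{l:bar}(d) gives $\overline{f^{-1}(\alpha)} = $ the arrow whose $\bar{\ }$ is $f^{-1}(\alpha)$, and a direct check via the second picture in the proof of Lemma~\ref{l:bar} identifies this with $gf(\alpha)$. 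The main obstacle is purely bookkeeping: keeping straight which of the two arrows at a given vertex is meant at each step and not conflating $g^{n_{\balpha}-1}$ with $g^{-1}$ on the wrong orbit — but since $n_{\balpha}$ is exactly the size of the $g$-orbit of $\balpha$, the reductions $g^{n_{\balpha}-1}=g^{-1}$ and $g^{n_{\balpha}-2}=g^{-2}$ on that orbit are valid, and everything collapses to Lemma~\ref{l:bar}. It may be most transparent to simply draw the relevant configuration of the six arrows around the triangle $i,j,k$ and read all three statements off the picture, as was done for Lemma~\ref{l:bar}.
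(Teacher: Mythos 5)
Your proof is correct and essentially the same as the paper's: distinctness of $i$, $j$, $k$ and of the six arrows follows from the no-loop condition and the count of two arrows per vertex, and parts (b), (c) are obtained from the identities $g^{-1}(\balpha)=f^{-1}(\alpha)$, $f^3=\mathrm{id}$ and $gf^{-1}=fg^{-1}=\bar{(\ )}$ of Lemma~\ref{l:bar}, exactly as in the paper (your chain for (c) is the paper's read in reverse, with $\overline{f^{-1}(\alpha)}=\overline{f^2(\alpha)}=gf(\alpha)$ coming from Lemma~\ref{l:bar}(b) applied to $f(\alpha)$). Only note that in Section~\ref{sec:rels} the quiver $Q$ is only assumed to satisfy Proposition~\ref{p:quiver}, so your triangulation-based argument for the distinctness of $i,j,k$ is not available there; the abstract loop argument you offer as an alternative is the one to use.
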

\begin{proof}
If any two of the vertices $i,j,k$ were identical, then at least one
of the arrows $\alpha$, $f(\alpha)$ or $f^2(\alpha)$ would be a
loop, a contradiction.

Now $\alpha, \balpha$ are the two distinct arrows starting at $i$,
and similarly $f(\alpha), g(\alpha)$ are those starting at $j$ and
$f^2(\alpha), gf(\alpha)$ those starting at $k$. As $i,j,k$ are
different, we get that these six arrows are all distinct.

We illustrate the situation in the following picture
\[
\xymatrix@=0.8pc{
&& && \ar[ddl]^{g^{n_{\balpha}-2}(\balpha)} \\ \\
&& & {\bullet_k} \ar[ddr]^{f^2(\alpha)=g^{n_{\balpha}-1}(\balpha)}
\ar[uul]^{gf(\alpha)=fg^{n_{\balpha}-2}(\balpha)} \\ \\
\ar[rr] && {\bullet_j} \ar[uur]^{f(\alpha)} \ar[ddl]^{g(\alpha)}
&& {\bullet_i} \ar[ll]^{\alpha} \ar[rr]^{\balpha} && \\ \\
& &&&& \ar[uul]_{g^{n_\alpha-1}(\alpha)=f^2(\balpha)}
}
\]

Applying Lemma~\ref{l:bar}, we get $f^2(\alpha)=f^{-1}(\alpha) =
g^{-1}(\balpha) = g^{n_{\balpha}-1}(\balpha)$, hence also
\[
gf(\alpha) = (gf^{-1})f^2(\alpha) =
(fg^{-1})g^{n_{\balpha}-1}(\balpha) = fg^{n_{\balpha}-2}(\balpha) .
\]
\end{proof}

\subsection{Basic relations}

The quiver with potential $(Q,W)$ gives rise to the (completed)
Jacobian algebra $\gL = \cP(Q,W)$ which is our main object of study.
It is defined as the quotient of the completed path algebra
$\widehat{KQ}$ by the closure of the two-sided ideal generated by
the directional derivatives of $W$ with respect to all arrows.

\begin{lemma} \label{l:1f}
For any $\beta \in Q_1$ we have the following relation in $\gL$.
\[
\beta \cdot f(\beta) = c_{\bbeta} \bbeta \cdot g(\bbeta) \cdot \ldots
\cdot
g^{n_{\bbeta}-2}(\bbeta) .
\]
\end{lemma}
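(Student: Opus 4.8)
The plan is to compute the directional derivative of the potential $W$ with respect to a suitable arrow and read off the stated relation. Concretely, fix $\beta \in Q_1$ and let $i$ be its starting vertex, so by Proposition~\ref{p:quiver}\eqref{it:degs} the two arrows starting at $i$ are $\beta$ and $\bbeta$, and the two arrows ending at $i$ are $f^{-1}(\beta)$ and $g^{-1}(\beta)$ (by Lemma~\ref{l:bar}(a)). I want to differentiate $W$ with respect to the arrow $g^{-1}(\beta)$. Recall $\partial_\gamma$ of a cyclic monomial $a_1 \cdots a_m$ is the sum of $a_{s+1} \cdots a_m a_1 \cdots a_{s-1}$ over all positions $s$ with $a_s = \gamma$; since $Q$ has no loops or $2$-cycles and every arrow in a given $f$- or $g$-cycle starts at a distinct vertex (Lemma~\ref{l:fgbar}(a) for the $f$-orbits, and the fact that the arcs around a puncture are distinct for the $g$-orbits), each arrow occurs at most once in each cyclic monomial of $W$, so the derivative is a single term per cycle in which $g^{-1}(\beta)$ appears.

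The arrow $\gamma := g^{-1}(\beta)$ lies in exactly two cycles appearing in $W$: one $f$-cycle and one $g$-cycle. For the $f$-cycle, $\gamma$ is a side of the triangle $\gamma \cdot f(\gamma) \cdot f^2(\gamma)$, and $f(\gamma) = f g^{-1}(\beta) = \bbeta$ by Lemma~\ref{l:bar}(c). Hence $f^2(\gamma) = f(\bbeta)$, and $\partial_\gamma$ of this $f$-cycle is $f(\gamma) \cdot f^2(\gamma) = \bbeta \cdot f(\bbeta)$. Wait — I need the arrow whose derivative produces $\beta \cdot f(\beta)$, so instead I differentiate with respect to $f^{-1}(\beta)$, call it $\delta$. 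The $f$-cycle through $\delta$ is $\delta \cdot f(\delta) \cdot f^2(\delta) = \delta \cdot \beta \cdot f(\beta)$, contributing $\beta \cdot f(\beta)$ to $\partial_\delta W$. The $g$-cycle through $\delta$: by Lemma~\ref{l:bar}(d), $\delta = f^{-1}(\beta) = g^{-1}(\bbeta)$, so $\delta$ lies in the $g$-orbit of $\bbeta$, and the corresponding cyclic term in $W$ is $-c_{\bbeta}\, \bbeta \cdot g(\bbeta) \cdots g^{n_{\bbeta}-1}(\bbeta)$ with $\delta = g^{-1}(\bbeta) = g^{n_{\bbeta}-1}(\bbeta)$ sitting in the last position. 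Its $\partial_\delta$ is $\bbeta \cdot g(\bbeta) \cdots g^{n_{\bbeta}-2}(\bbeta)$, contributing $-c_{\bbeta}\,\bbeta \cdot g(\bbeta) \cdots g^{n_{\bbeta}-2}(\bbeta)$. Setting $\partial_\delta W = 0$ in $\gL$ yields exactly
\[
\beta \cdot f(\beta) = c_{\bbeta}\, \bbeta \cdot g(\bbeta) \cdots g^{n_{\bbeta}-2}(\bbeta),
\]
as claimed.

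The main point to get right is the bookkeeping: verifying via Lemma~\ref{l:bar} that the single arrow $f^{-1}(\beta)$ is simultaneously $g^{-1}(\bbeta)$, so that it appears in precisely these two cycles of $W$ and in no others, and that it occupies the correct cyclic positions ($f$-cycle: first position; $g$-cycle: last position) so the cyclic rotations land on $\beta \cdot f(\beta)$ and on $\bbeta \cdot g(\bbeta)\cdots g^{n_{\bbeta}-2}(\bbeta)$ rather than on some rotation of these. One should also note the degenerate case flagged after Proposition~\ref{p:quiver} (when $q = p$, e.g. the once-punctured torus): there $f^{-1}(\beta)$ could a priori coincide with $\beta$ or $\bbeta$, but Lemma~\ref{l:fgbar}(a) rules this out, so no arrow is repeated within a cycle and the derivative computation above is valid verbatim. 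I do not anticipate a genuine obstacle here — this is a direct unwinding of the definition of the Jacobian ideal together with the combinatorial identities already established.
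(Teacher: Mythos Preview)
Your approach is essentially identical to the paper's: differentiate $W$ with respect to $\alpha = f^{-1}(\beta)$ and use that $\bbeta = g(\alpha)$ (equivalently, your identity $f^{-1}(\beta) = g^{-1}(\bbeta)$), so the two contributions are exactly $\beta \cdot f(\beta)$ and $-c_{\bbeta}\,\bbeta \cdot g(\bbeta)\cdots g^{n_{\bbeta}-2}(\bbeta)$. One small correction to your side remark: the arcs around a puncture need \emph{not} be distinct (a loop arc appears twice), so your claim that the arrows in a $g$-cycle have distinct starting vertices is false in general; nevertheless the needed fact---that the arrow $f^{-1}(\beta)$ occurs exactly once in its $g$-cycle---is immediate from the definition of the orbit size $n_{\bbeta}$, so the derivative computation is unaffected.
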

\begin{proof}
Since each arrow belongs to exactly one $f$-orbit and one $g$-orbit,
we see that each arrow appears exactly once in each of two sums
comprising $W$ in~\eqref{e:W}.

By computing the directional derivative of $W$ with respect to the
arrow $\alpha = f^{-1}(\beta)$ we see that
\begin{equation} \label{e:ff2}
f(\alpha) \cdot f^2(\alpha) = c_\alpha g(\alpha) \cdot g^2(\alpha)
\cdot \ldots \cdot g^{n_\alpha-1}(\alpha)
\end{equation}
and the lemma follows by noting that $\bbeta = g(\alpha)$ and hence
$n_{\bbeta} = n_\alpha$.
\end{proof}

\begin{prop}
For any $\alpha \in Q_1$ we have the following relations in $\gL$.
\begin{align}
\label{e:1ff2}
\begin{split}
\alpha \cdot f(\alpha) \cdot f^2(\alpha) &=
c_{\alpha} \alpha \cdot g(\alpha) \cdot g^2(\alpha) \cdot \ldots \cdot
g^{n_\alpha-1}(\alpha) \\ &=
c_{\balpha} \balpha \cdot g(\balpha) \cdot g^2(\balpha) \cdot \ldots \cdot
g^{n_{\balpha}-1}(\balpha) = \balpha \cdot f(\balpha) \cdot f^2(\balpha)
\end{split}
\\
\label{e:1gfg}
\alpha \cdot g(\alpha) \cdot fg(\alpha) &=
c_{f(\alpha)} \alpha \cdot f(\alpha) \cdot gf(\alpha) \cdot g^2f(\alpha) \cdot
\ldots \cdot g^{n_{f(\alpha)}-2}f(\alpha)
\\
\label{e:1fgf}
\alpha \cdot f(\alpha) \cdot gf(\alpha) &=
c_{\balpha} \balpha \cdot g(\balpha) \cdot \ldots \cdot
g^{n_{\balpha}-3}(\balpha) \cdot g^{n_{\balpha}-2}(\balpha) \cdot
fg^{n_{\balpha}-2}(\balpha)
\end{align}
\end{prop}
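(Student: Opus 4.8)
The plan is to derive all three relations from the single ``basic'' relation of Lemma~\ref{l:1f} together with the combinatorial identities of Lemma~\ref{l:bar} and Lemma~\ref{l:fgbar}, by left-multiplying Lemma~\ref{l:1f} by a suitable arrow and rewriting the arrows appearing on the right-hand side using the $f$, $g$, $\bar{\ }$ dictionary. The guiding principle is that every relation we want is of the form ``a short word equals a long word that runs around a $g$-orbit'', and Lemma~\ref{l:1f} already has exactly this shape; the work is bookkeeping of indices.

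First I would prove~\eqref{e:1ff2}. Left-multiplying the identity~\eqref{e:ff2} of the proof of Lemma~\ref{l:1f} (equivalently Lemma~\ref{l:1f} itself with $\beta = f(\alpha)$, so $\bbeta = \overline{f(\alpha)} = g(\alpha)$ by Lemma~\ref{l:bar}(b)) by $\alpha$ yields
\[
\alpha \cdot f(\alpha) \cdot f^2(\alpha) = c_\alpha\, \alpha \cdot g(\alpha) \cdot g^2(\alpha) \cdots g^{n_\alpha - 1}(\alpha),
\]
which is the first equality. The last equality is the same statement applied to $\balpha$ in place of $\alpha$, once one observes $f(\balpha) = g^{n_{\balpha}-1}(\balpha)$ is consistent with Lemma~\ref{l:fgbar}(b): indeed $\alpha \cdot f(\alpha)\cdot f^2(\alpha)$ and $\balpha\cdot f(\balpha)\cdot f^2(\balpha)$ are the two $3$-cycles of $W$ through the vertex $i$ where $\alpha,\balpha$ start, so it suffices to see they are genuinely both cycles at $i$ — which is immediate since $f^3 = \mathrm{id}$. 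The middle equality then follows by transitivity, but one must check the two middle expressions literally agree as elements of $\widehat{KQ}$, i.e.\ that $c_\alpha \alpha g(\alpha)\cdots g^{n_\alpha-1}(\alpha)$ and $c_{\balpha}\balpha g(\balpha)\cdots g^{n_{\balpha}-1}(\balpha)$ are two ways of writing the \emph{same} $g$-cycle read from two starting arrows $\alpha$ and $\balpha = f^{-1}g(\alpha)$ (Lemma~\ref{l:bar}(e)); since $\alpha$ and $\balpha$ lie in the same $g$-orbit they have the same length and the same scalar, and the cyclic words coincide.

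For~\eqref{e:1gfg}, apply Lemma~\ref{l:1f} with $\beta = g(\alpha)$: then $\bbeta = \overline{g(\alpha)} = f(\alpha)$ by Lemma~\ref{l:bar}(b), and the lemma gives $g(\alpha)\cdot fg(\alpha) = c_{f(\alpha)} f(\alpha) \cdot g f(\alpha) \cdots g^{n_{f(\alpha)}-2} f(\alpha)$; left-multiplying by $\alpha$ gives~\eqref{e:1gfg} directly. For~\eqref{e:1fgf}, start from the already-established~\eqref{e:1ff2} in the form $\balpha\cdot f(\balpha)\cdot f^2(\balpha) = c_{\balpha}\balpha\cdot g(\balpha)\cdots g^{n_{\balpha}-1}(\balpha)$ and use Lemma~\ref{l:fgbar}\eqref{it:f2}, \eqref{it:gf}: $f(\balpha) = f\!\left(\overline{g(\alpha)\,\text{-side}}\right)$ — more precisely, note $f(\alpha) = \overline{g(\alpha)}$ forces, via Lemma~\ref{l:fgbar} applied to $\balpha$, the identifications $f^2(\balpha) = g^{n_\alpha - 1}(\alpha)$ and $g f(\balpha)$ in terms of $g$-powers of $\balpha$, so that the left side $\alpha\cdot f(\alpha)\cdot gf(\alpha)$ can be matched to a truncation-plus-$f$ of the $\balpha$-cycle. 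The cleanest route is: write the left-hand side of~\eqref{e:1fgf} and peel off using Lemma~\ref{l:1f} with $\beta = f(\alpha)$ to replace $f(\alpha)\cdot gf(\alpha)$; but $gf(\alpha) \neq f^2(\alpha)$, so instead use $\overline{f(\alpha)} = g(\alpha)$ and Lemma~\ref{l:fgbar}\eqref{it:gf} ($gf(\alpha) = fg^{n_{\balpha}-2}(\balpha)$) to rewrite, then apply Lemma~\ref{l:1f} to the pair $\bigl(g^{n_{\balpha}-2}(\balpha),\, fg^{n_{\balpha}-2}(\balpha)\bigr)$.

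The main obstacle is purely notational: keeping straight which arrow plays the role of ``$\beta$'' and which of ``$\bbeta$'' in each invocation of Lemma~\ref{l:1f}, and verifying that the exponents of $g$ match up at the two ends (the shift between $n_\alpha$ and $n_{\balpha}$ never occurs because $\alpha,\balpha$ are in the same $g$-orbit when we need them to be, and conversely we must be careful that $f(\alpha)$ and $\alpha$ generally lie in \emph{different} $g$-orbits, which is exactly why the scalar $c_{f(\alpha)}$ rather than $c_\alpha$ appears in~\eqref{e:1gfg}). Once the substitutions from Lemma~\ref{l:bar} and Lemma~\ref{l:fgbar} are applied systematically, each of the three identities reduces to left-multiplying Lemma~\ref{l:1f} by one arrow, so there is no genuine algebraic difficulty beyond this bookkeeping.
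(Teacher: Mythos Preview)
Your overall strategy---reduce everything to left- or right-multiplying Lemma~\ref{l:1f} by a single arrow, together with the identities of Lemma~\ref{l:bar} and Lemma~\ref{l:fgbar}---is exactly the paper's, and your derivation of~\eqref{e:1gfg} is correct and matches the paper verbatim.

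However, there is a genuine error in your argument for the middle equality of~\eqref{e:1ff2}. You assert that $\alpha$ and $\balpha$ lie in the same $g$-orbit (and repeat this in your final paragraph). This is false in general: by Lemma~\ref{l:orbits} the $g$-orbits correspond to punctures, and the $g$-orbits of $\alpha$ and $\balpha$ correspond to the \emph{two endpoints} of the arc at which both arrows start. Those endpoints are typically distinct, so generically $n_\alpha \neq n_{\balpha}$ and $c_\alpha \neq c_{\balpha}$; the two middle expressions in~\eqref{e:1ff2} are then paths of different lengths and are certainly not equal in $\widehat{KQ}$. (Your citation of Lemma~\ref{l:bar}(e) is also misapplied: $f^{-1}g(\alpha)$ is the other arrow \emph{ending} where $\alpha$ ends, not $\balpha$.)

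The equality of those two middle expressions holds only in $\gL$ and needs one more invocation of Lemma~\ref{l:1f}. The paper takes $\beta = \alpha$ there, obtaining
\[
\alpha \cdot f(\alpha) = c_{\balpha}\,\balpha \cdot g(\balpha) \cdots g^{n_{\balpha}-2}(\balpha),
\]
and then right-multiplies by $f^2(\alpha)$, using Lemma~\ref{l:fgbar}\eqref{it:f2} to rewrite $f^2(\alpha) = g^{n_{\balpha}-1}(\balpha)$. This gives the first expression equal to the third directly, closing the chain. The same move---Lemma~\ref{l:1f} with $\beta = \alpha$, then right-multiply by $gf(\alpha)$ and invoke Lemma~\ref{l:fgbar}\eqref{it:gf} to rewrite $gf(\alpha) = fg^{n_{\balpha}-2}(\balpha)$---gives~\eqref{e:1fgf} in one line, without the detours in your sketch.
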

\begin{proof}
The first equality in~\eqref{e:1ff2} follows from~\eqref{e:ff2},
whereas the second follows from Lemma~\ref{l:1f} with $\beta=\alpha$
and Lemma~\ref{l:fgbar}(\ref{it:f2}). We get the last equality from
the first one by interchanging $\alpha$ with $\balpha$.

The relation \eqref{e:1gfg} follows from Lemma~\ref{l:1f} with
$\beta=g(\alpha)$, noting that $\bbeta = f(\alpha)$. Finally,
\eqref{e:1fgf} follows from Lemma~\ref{l:1f} with $\beta=\alpha$ and
Lemma~\ref{l:fgbar}(\ref{it:gf}).
\end{proof}

\begin{defn}
Let $i \in Q_0$ and let $\alpha$, $\balpha$ be the arrows starting
at $i$. In view of~\eqref{e:1ff2}, the two $3$-cycles
\begin{align*}
\alpha \cdot f(\alpha) \cdot f^2(\alpha) &,& \balpha \cdot
f(\balpha) \cdot f^2(\balpha)
\end{align*}
as well as the scalar multiples of the $n_{\alpha}$-cycle and
$n_{\balpha}$-cycle
\begin{align*}
c_{\alpha} \alpha \cdot g(\alpha) \cdot g^2(\alpha) \cdot \ldots
\cdot g^{n_\alpha-1}(\alpha) &,& c_{\balpha} \balpha \cdot
g(\balpha) \cdot g^2(\balpha) \cdot \ldots \cdot
g^{n_{\balpha}-1}(\balpha)
\end{align*}
starting and ending at $i$ are all equal in $\gL$. We denote their
common value by $z_i$.
\end{defn}

\subsection{Additional relations}

In this section we derive additional relations in the Jacobian
algebra under further hypotheses on the quiver. They are summarized
in the next proposition.

\begin{prop} \label{p:1gfg_zero}
Assume one of the following hypotheses:
\begin{itemize}
\item
$Q$ satisfies the condition~\eqref{e:star}; or

\item
$Q$ satisfies the condition~\eqref{e:diamond} and $\prod_{\alpha \in
\Omega} c_\alpha \neq 1$, where $\Omega$ contains one representative
from each $g$-orbit;
\end{itemize}
Then for any arrow $\alpha \in Q_1$, we have
\[
\alpha \cdot g(\alpha) \cdot fg(\alpha) = 0 \quad \text{and} \quad
\alpha \cdot f(\alpha) \cdot gf(\alpha) = 0
\]
in the completed Jacobian algebra $\gL = \cP(Q,W)$.
\end{prop}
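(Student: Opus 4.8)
The plan is to prove both displayed identities simultaneously, exploiting the symmetry of the two statements under the involution $\alpha \mapsto \balpha$ combined with Lemma~\ref{l:bar}; indeed, applying Lemma~\ref{l:bar}\ parts (b)--(e) one checks that $\overline{g(\alpha)}=f(\alpha)$ and $fg(\alpha)=\overline{gf(\alpha)}$, so the relation $\alpha\cdot g(\alpha)\cdot fg(\alpha)=0$ for all $\alpha$ is the same family as $\alpha\cdot f(\alpha)\cdot gf(\alpha)=0$ for all $\alpha$ up to renaming. Hence it suffices to prove, say, $\alpha\cdot f(\alpha)\cdot gf(\alpha)=0$ for every arrow $\alpha$. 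The starting point is the relation~\eqref{e:1fgf}, which expresses $\alpha\cdot f(\alpha)\cdot gf(\alpha)$ as a scalar multiple of the long path $c_{\balpha}\,\balpha\cdot g(\balpha)\cdots g^{n_{\balpha}-2}(\balpha)\cdot fg^{n_{\balpha}-2}(\balpha)$. By Lemma~\ref{l:fgbar}\eqref{it:gf} the tail $fg^{n_{\balpha}-2}(\balpha)$ equals $gf(\alpha)$, and by Lemma~\ref{l:fgbar}\eqref{it:f2} the path $\balpha\cdot g(\balpha)\cdots g^{n_{\balpha}-2}(\balpha)$ followed by one more $g$ closes up; so the right-hand side is a path of length $n_{\balpha}+1$ that factors through the $n_{\balpha}$-cycle at the vertex $i$ where $\alpha,\balpha$ start, composed with $gf(\alpha)$.

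The key mechanism is to substitute one of the basic relations into itself. Under hypothesis~\eqref{e:star}, for the given $\alpha$ at least one of $n_\alpha,n_{f(\alpha)}$ is $\ge 4$; whichever it is, the corresponding long $g$-cycle has length $\ge 4$, so when we rewrite $\alpha\cdot f(\alpha)$ via Lemma~\ref{l:1f} and feed the result back in, we pick up an extra arrow inside a subpath which can in turn be rewritten, producing a term that starts with a $3$-cycle $\balpha\cdot f(\balpha)\cdot f^2(\balpha)$ times something, or a term $z_i$ times a nonzero-length path. Repeating this, one reaches an expression of the form $z_i\cdot(\text{path of positive length})$ or $(\text{path of positive length})\cdot z_i$, i.e.\ $\alpha\cdot f(\alpha)\cdot gf(\alpha)$ becomes a scalar multiple of $z_i$ multiplied by an arrow, and simultaneously, by iterating once more, a scalar multiple of $z_i$ multiplied by the \emph{same} arrow times a $3$-cycle; comparing and using that $z_i$ is also equal to a $3$-cycle $\balpha\cdot f(\balpha)\cdot f^2(\balpha)$ forces, after finitely many steps, the path to be killed because the cycle length strictly grows while the homogeneous component is fixed. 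I would organize this as: (1) establish the self-substitution lemma turning $\alpha f(\alpha)gf(\alpha)$ into $\gl\, z_i\,\gamma$ for a suitable arrow $\gamma$ and scalar $\gl$ built from the $c$'s; (2) apply it again to the factor hidden inside $z_i$ to get $\gl'\,z_i\,z_{i'}\,\gamma'$; (3) observe that $z_i z_{i'}$ is a cyclic path of length $\ge 6$ equal to a product of two $3$-cycles, which by degree reasons inside the complete path algebra must already have been zero, or can be iterated indefinitely contradicting convergence unless it is zero.

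For hypothesis~\eqref{e:diamond}, all $g$-orbits have size $3$, so $z_i = c_\alpha\,\alpha\cdot g(\alpha)\cdot g^2(\alpha) = \alpha\cdot f(\alpha)\cdot f^2(\alpha)$ and the long paths are genuine $3$-cycles; here the iteration closes up after going once around and produces $\alpha\cdot g(\alpha)\cdot fg(\alpha) = \bigl(\prod_{\beta\in\Omega} c_\beta\bigr)\,\alpha\cdot g(\alpha)\cdot fg(\alpha)$ plus possibly lower-order corrections, and since $\prod c_\beta\neq 1$ we can solve to conclude the element vanishes. The main obstacle I anticipate is bookkeeping in step~(1)--(2): tracking exactly which arrow $\gamma$ appears, which vertex's $z$ is produced, and the precise scalar, using Lemma~\ref{l:bar} and Lemma~\ref{l:fgbar} to keep all the $f$, $g$, $\bar{\ }$ juggling consistent; also I should be careful that ``positive length path times $z_i$'' genuinely forces vanishing, which relies on the completed Jacobian algebra having no element that can be written as an infinite strictly-increasing-degree sum equal to a fixed finite-degree element — so the convergence/degree argument needs to be spelled out cleanly rather than waved at, perhaps by exhibiting that the path lies in $\bigcap_{m} \mathfrak{m}^m$ where $\mathfrak m$ is the arrow ideal and invoking completeness, or by a direct induction showing it is annihilated.
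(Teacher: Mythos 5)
The kernel of your \eqref{e:star} argument --- substitute \eqref{e:1gfg} and \eqref{e:1fgf} into one another so that the path representing the element becomes strictly longer, then conclude vanishing because an element of $\gL$ equal to arbitrarily long paths lies in every power of the arrow ideal and hence is $0$ by completeness --- is exactly the paper's mechanism (Lemmas~\ref{l:1gfg_star} and~\ref{l:1gfg_zero_star}). But several steps you interpose are wrong or would fail. First, the symmetry reduction is false: by Lemma~\ref{l:bar}(b) one has $\overline{gf(\alpha)}=f^2(\alpha)$, not $fg(\alpha)$, so the family $\alpha\cdot g(\alpha)\cdot fg(\alpha)$ is not a renaming of $\alpha\cdot f(\alpha)\cdot gf(\alpha)$; the two are merely linked through \eqref{e:1gfg} (the paper proves one and treats the other by a parallel, not identical, argument). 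Second, you apply \eqref{e:star} to $\alpha$ itself, invoking ``$n_\alpha\geq 4$ or $n_{f(\alpha)}\geq 4$'', but $n_\alpha$ controls nothing in this computation: rewriting $\alpha\cdot f(\alpha)$ via Lemma~\ref{l:1f} yields a path of length $n_{\balpha}-1$, and the tail in \eqref{e:1gfg} has length $n_{f(\alpha)}-3$, so the relevant pair is $\{n_{f(\alpha)},n_{\balpha}\}$; the needed ``not both equal to $3$'' comes from applying \eqref{e:star} to the arrow $f(\alpha)$ together with Lemma~\ref{l:fgbar}(\ref{it:f2}) (which gives $n_{\balpha}=n_{f^2(\alpha)}$). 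With your choice, in the case $n_\alpha\geq 4$, $n_{f(\alpha)}=n_{\balpha}=3$ no ``extra arrow'' appears and the iteration stalls. Third, the detour through ``$z_i$ times an arrow'' and ``degree reasons''/a fixed ``homogeneous component'' is not a valid argument: $\gL$ carries no path-length grading (the Jacobian ideal identifies paths of length $2$ with paths of length $n_{\balpha}-1$), a product of two $3$-cycles is not zero for degree reasons, and the fact $z_i\cdot\beta=0$ is Lemma~\ref{l:ziZ}, which the paper deduces from Lemma~\ref{l:ffgg_zero}, which in turn rests on the very proposition you are proving --- so any appeal to it here is circular. The clean version of your plan is the paper's: use \eqref{e:1fgf} to exhibit $\alpha\cdot f(\alpha)\cdot gf(\alpha)$ as $q'\cdot\alpha'\cdot g(\alpha')\cdot fg(\alpha')$ with $\alpha'=g^{n_{\balpha}-3}(\balpha)$, alternate with \eqref{e:1gfg}, and only then invoke the $\bigcap_m\mathfrak m^m$ argument you mention at the end.

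In the \eqref{e:diamond} case the one-loop computation is the right idea (and there are no ``lower-order corrections'': under \eqref{e:diamond} the relations \eqref{e:1gfg} and \eqref{e:1fgf} are exact equalities of $3$-term paths), but the scalar picked up after one full loop is $c_{f(\alpha)}c_{\balpha}c_{f(\balpha)}c_{\alpha}$, whereas the hypothesis concerns $\prod_{\beta\in\Omega}c_\beta$ with one representative per $g$-orbit. Identifying the two requires knowing that $Q$ has exactly four $g$-orbits and that $\alpha$, $\balpha$, $f(\alpha)$, $f(\balpha)$ lie in pairwise distinct ones --- this is precisely Lemma~\ref{l:diamond} (the $A_4$-torsor structure of $Q_1$), which your sketch omits entirely. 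Without it, the four scalars you collect could a priori repeat one orbit and miss another, and their product could equal $1$ even though $\prod_{\beta\in\Omega}c_\beta\neq 1$, so the final ``solve and conclude'' step would not go through.
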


The proof of the proposition is given by the series of lemmas below.
The case of condition~\eqref{e:star} is dealt with in
Lemma~\ref{l:1gfg_star} and Lemma~\ref{l:1gfg_zero_star}, and the
case of condition~\eqref{e:diamond} is considered in
Lemma~\ref{l:diamond} and Lemma~\ref{l:1gfg_zero_diamond}.

\begin{lemma} \label{l:1gfg_star}
Assume that $Q$ satisfies~\eqref{e:star}. Then for any arrow $\alpha
\in Q_1$, there is an arrow $\alpha' \in Q_1$ and (scalar multiples
of) paths $q$, $q'$ not both of length zero such that
\begin{align}
\label{e:1gfgq}
\alpha \cdot g(\alpha) \cdot fg(\alpha) &= \alpha \cdot f(\alpha)
\cdot gf(\alpha) \cdot q \\
\label{e:1fgfq}
\alpha \cdot f(\alpha) \cdot gf(\alpha) &= q' \cdot \alpha' \cdot
g(\alpha') \cdot fg(\alpha') .
\end{align}
\end{lemma}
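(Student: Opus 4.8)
The plan is to produce the two identities \eqref{e:1gfgq} and \eqref{e:1fgfq} by chaining together the basic relations \eqref{e:1gfg}, \eqref{e:1fgf} and \eqref{e:1ff2} already established, using condition~\eqref{e:star} to guarantee that the auxiliary paths $q$, $q'$ are not both trivial. First I would look at \eqref{e:1gfg}: it rewrites $\alpha\cdot g(\alpha)\cdot fg(\alpha)$ as a scalar multiple of
\[
\alpha\cdot f(\alpha)\cdot gf(\alpha)\cdot g^2f(\alpha)\cdots g^{n_{f(\alpha)}-2}f(\alpha).
\]
If $n_{f(\alpha)}\geq 4$ then there is at least one factor $g^2f(\alpha)$ beyond $gf(\alpha)$, so the tail $q = c_{f(\alpha)}\, g^2 f(\alpha)\cdots g^{n_{f(\alpha)}-2}f(\alpha)$ is a path of positive length and \eqref{e:1gfgq} holds directly. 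If instead $n_{f(\alpha)}=3$, then \eqref{e:1gfg} degenerates to $\alpha\cdot g(\alpha)\cdot fg(\alpha) = c_{f(\alpha)}\,\alpha\cdot f(\alpha)\cdot gf(\alpha)$, i.e.\ \eqref{e:1gfgq} still holds with $q$ the scalar $c_{f(\alpha)}$ of length zero; in that case condition~\eqref{e:star} applied to the arrow $f(\alpha)$ (whose $g$-orbit has size $3$) forces $n_{f^2(\alpha)}\geq 4$, which is exactly what will feed the companion identity.

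Next I would handle \eqref{e:1fgfq} symmetrically using \eqref{e:1fgf}, which rewrites $\alpha\cdot f(\alpha)\cdot gf(\alpha)$ as a scalar multiple of
\[
\balpha\cdot g(\balpha)\cdots g^{n_{\balpha}-2}(\balpha)\cdot fg^{n_{\balpha}-2}(\balpha).
\]
The final two factors here are $g^{n_{\balpha}-2}(\balpha)$ followed by $f$ applied to it; set $\alpha' = g^{n_{\balpha}-3}(\balpha)$ so that $g(\alpha') = g^{n_{\balpha}-2}(\balpha)$ and $fg(\alpha') = fg^{n_{\balpha}-2}(\balpha)$. Then the expression ends in $\alpha'\cdot g(\alpha')\cdot fg(\alpha')$, and the prefix $q' = c_{\balpha}\,\balpha\cdot g(\balpha)\cdots g^{n_{\balpha}-4}(\balpha)$ is a path of positive length precisely when $n_{\balpha}\geq 4$. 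When $n_{\balpha}=3$ the relation \eqref{e:1fgf} collapses to $\alpha\cdot f(\alpha)\cdot gf(\alpha) = c_{\balpha}\,\balpha\cdot g(\balpha)\cdot fg(\balpha)$, which is \eqref{e:1fgfq} with $\alpha'=\balpha$ and $q'$ the scalar $c_{\balpha}$ of length zero. A small bookkeeping point to verify is that $\alpha' = g^{n_{\balpha}-3}(\balpha)$ is a genuine arrow (we need $n_{\balpha}\geq 3$, which holds by Lemma~\ref{l:orbits}, or rather by the standing hypothesis $n\geq 3$ built into Proposition~\ref{p:quiver}) and that when $n_{\balpha}=3$ this definition indeed gives $\alpha'=\balpha$.

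The one thing that needs genuine care — and what I expect to be the main obstacle — is matching the two dichotomies so that the conclusion ``$q$, $q'$ not both of length zero'' always holds. Both $q$ and $q'$ have length zero exactly in the combined case $n_{f(\alpha)}=3$ \emph{and} $n_{\balpha}=3$. I would rule this out using condition~\eqref{e:star}: applied to the arrow $g^{-1}f(\alpha)$ — note $\overline{g^{-1}f(\alpha)}$ is related by Lemma~\ref{l:bar} to the relevant arrows — \eqref{e:star} says that $n$ of some arrow together with $n_f$ of it is at least $4$, and the identity $\overline{f(\alpha)} = g(\alpha)$ from Lemma~\ref{l:bar}(b) together with $n_{g(\alpha)} = n_\alpha$ lets one translate this into: $n_\alpha\geq 4$ or $n_{f(\alpha)}\geq 4$ as appropriate. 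The cleanest route is to observe that $\alpha'$ in \eqref{e:1fgfq} always lies in the same $g$-orbit as $\balpha$, so $n_{\alpha'}=n_{\balpha}$, and $f(\alpha')$ — one needs to identify its $g$-orbit — turns out to share its $g$-orbit with $f(\alpha)$; hence applying \eqref{e:star} to $\alpha'$ gives $n_{\balpha}\geq 4$ or $n_{f(\alpha)}\geq 4$, which is precisely the negation of the bad case. Thus in every situation at least one of $q$, $q'$ has positive length, and the lemma follows. I would lay this out with a short picture in the style of the figures in Lemma~\ref{l:fgbar} to make the orbit identifications transparent rather than belaboring the index arithmetic.
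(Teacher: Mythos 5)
Your derivation of \eqref{e:1gfgq} from \eqref{e:1gfg} and of \eqref{e:1fgfq} from \eqref{e:1fgf}, with $\alpha'=g^{n_{\balpha}-3}(\balpha)$, $q$ of length $n_{f(\alpha)}-3$ and $q'$ of length $n_{\balpha}-3$, matches the paper. The gap is precisely at the step you flagged as delicate: ruling out the simultaneous case $n_{f(\alpha)}=3$ and $n_{\balpha}=3$, and neither of your two proposed applications of \eqref{e:star} yields the required disjunction ``$n_{f(\alpha)}\geq 4$ or $n_{\balpha}\geq 4$''. Applying \eqref{e:star} to $g^{-1}f(\alpha)$ gives $n_{f(\alpha)}\geq 4$ or $n_{fg^{-1}f(\alpha)}\geq 4$, and since $fg^{-1}f(\alpha)=\overline{f(\alpha)}=g(\alpha)$ by Lemma~\ref{l:bar}, this is ``$n_{f(\alpha)}\geq 4$ or $n_{\alpha}\geq 4$''; but $n_\alpha$ and $n_{\balpha}$ are the orbit sizes at the two \emph{different} endpoints of the arc $i$, so this does not exclude $n_{f(\alpha)}=n_{\balpha}=3$ with $n_\alpha\geq 4$. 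Your ``cleanest route'' rests on the claim that $f(\alpha')$ lies in the same $g$-orbit as $f(\alpha)$, and that claim is false in general: by Lemma~\ref{l:bar}(c), $f(\alpha')=fg^{-1}\bigl(g^{n_{\balpha}-2}(\balpha)\bigr)=\overline{g^{n_{\balpha}-2}(\balpha)}$, whose $g$-orbit corresponds to the far endpoint of the arc at which $\alpha'$ ends --- an arc incident to the puncture of the $g$-orbit of $\balpha$, with no relation in general to the endpoint of $j$ that determines the $g$-orbit of $f(\alpha)$ (only one step further along the orbit does Lemma~\ref{l:fgbar}(\ref{it:gf}) give $fg^{n_{\balpha}-2}(\balpha)=gf(\alpha)$). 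So \eqref{e:star} applied to $\alpha'$ yields ``$n_{\balpha}\geq 4$ or $n_{f(\alpha')}\geq 4$'' with $n_{f(\alpha')}$ an unrelated orbit size, and the bad case is still not excluded.

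The correct and short fix --- which is what the paper does --- is to apply \eqref{e:star} to the arrow $f(\alpha)$ itself: it gives $n_{f(\alpha)}\geq 4$ or $n_{f^2(\alpha)}\geq 4$, and by Lemma~\ref{l:fgbar}(\ref{it:f2}) we have $f^2(\alpha)=g^{n_{\balpha}-1}(\balpha)$, so $f^2(\alpha)$ lies in the $g$-orbit of $\balpha$ and $n_{f^2(\alpha)}=n_{\balpha}$. Hence $n_{f(\alpha)}\geq 4$ or $n_{\balpha}\geq 4$, which is exactly the statement that $q$ and $q'$ are not both of length zero. With that replacement your argument is complete; the rest of your write-up needs no change.
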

\begin{proof}
The equation~\eqref{e:1gfgq} follows from~\eqref{e:1gfg}
whereas~\eqref{e:1fgfq} follows from~\eqref{e:1fgf}, taking $\alpha'
= g^{n_{\balpha}-3}(\balpha)$. The path $q$ is of length
$n_{f(\alpha)}-3$ whereas $q'$ is of length $n_{\balpha}-3$. Since
$n_{\balpha} = n_{f^2(\alpha)}$, the condition~\eqref{e:star} (for
the arrow $f(\alpha)$) implies that $n_{f(\alpha)} \geq 4$ or
$n_{\balpha} \geq 4$.
\end{proof}

\begin{lemma} \label{l:1gfg_zero_star}
Assume that $Q$ satisfies~\eqref{e:star}. Then for any arrow $\alpha
\in Q_1$, we have
\[
\alpha \cdot g(\alpha) \cdot fg(\alpha) = 0 \quad \text{and} \quad
\alpha \cdot f(\alpha) \cdot gf(\alpha) = 0
\]
in the completed Jacobian algebra $\gL$.
\end{lemma}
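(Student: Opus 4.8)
The plan is to iterate the substitution rules of Lemma~\ref{l:1gfg_star} and exploit the completeness of $\gL$. Starting from an arbitrary arrow $\alpha$, equation~\eqref{e:1fgfq} expresses $\alpha \cdot f(\alpha) \cdot gf(\alpha)$ as $q' \cdot \alpha' \cdot g(\alpha') \cdot fg(\alpha')$, and then~\eqref{e:1gfgq} (applied to $\alpha'$) rewrites $\alpha' \cdot g(\alpha') \cdot fg(\alpha')$ as $\alpha' \cdot f(\alpha') \cdot gf(\alpha') \cdot q$. Feeding the result back into~\eqref{e:1fgfq} for $\alpha'$, and so on, we produce after $N$ rounds an expression of the form $\alpha \cdot f(\alpha) \cdot gf(\alpha) = u_N \cdot \alpha^{(N)} \cdot f(\alpha^{(N)}) \cdot gf(\alpha^{(N)}) \cdot v_N$ (up to a scalar), where $u_N, v_N$ are paths whose total length is $\sum (n_{\bar{\alpha}^{(k)}} - 3) + \sum (n_{f(\alpha^{(k)})} - 3)$ over the first $N$ steps. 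By condition~\eqref{e:star}, each such pair of steps contributes at least $1$ to the total length, so the length of $u_N \cdot v_N$ grows without bound as $N \to \infty$. Since the element $\alpha \cdot f(\alpha) \cdot gf(\alpha) \in \gL = \cP(Q,W)$ lies in the closure of the ideal and thus in arbitrarily high powers of the arrow ideal $\widehat{\mathfrak{m}}$, it must equal $0$ in the completed Jacobian algebra. The second vanishing, $\alpha \cdot g(\alpha) \cdot fg(\alpha) = 0$, then follows immediately from~\eqref{e:1gfgq}.

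More carefully, I would set this up as a formal induction. Define a sequence of arrows by the recipe above: given $\alpha^{(k)}$, let $\alpha^{(k+1)} = g^{n_{\overline{\alpha^{(k)}}}-3}(\overline{\alpha^{(k)}})$ as in Lemma~\ref{l:1gfg_star}. One shows by induction on $N$ that there exist paths $u_N$ (on the left) and $v_N$ (on the right), of combined length exactly $\sum_{k=0}^{N-1}\bigl[(n_{\overline{\alpha^{(k)}}}-3) + (n_{f(\alpha^{(k)})}-3)\bigr]$, and a nonzero scalar $\gl_N$, such that
\[
\alpha \cdot f(\alpha) \cdot gf(\alpha) = \gl_N \, u_N \cdot \alpha^{(N)} \cdot f(\alpha^{(N)}) \cdot gf(\alpha^{(N)}) \cdot v_N
\]
in $\gL$. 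The inductive step is exactly one application of~\eqref{e:1gfgq} followed by one application of~\eqref{e:1fgfq}. Condition~\eqref{e:star}, applied to the arrow $f(\alpha^{(k)})$ (whose relevant orbit sizes are $n_{f(\alpha^{(k)})}$ and $n_{f^2(\alpha^{(k)})} = n_{\overline{f(\alpha^{(k)})}}$, matching the pattern in Lemma~\ref{l:1gfg_star}), guarantees that the $k$-th bracket is at least $1$. Hence $\operatorname{length}(u_N) + \operatorname{length}(v_N) \geq N$, so the right-hand side lies in $\widehat{\mathfrak{m}}^{N+3}$ for every $N$.

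The conclusion is then a standard completeness argument: in $\widehat{KQ}$, and hence in its quotient $\gL$, we have $\bigcap_{m \geq 0} \widehat{\mathfrak{m}}^m = 0$, because $\widehat{KQ}$ is complete with respect to the $\widehat{\mathfrak{m}}$-adic topology and $\gL$ carries the quotient topology. Since $\alpha \cdot f(\alpha) \cdot gf(\alpha)$ belongs to $\widehat{\mathfrak{m}}^{N+3}$ for all $N$, it is zero. Applying~\eqref{e:1gfgq} once more gives $\alpha \cdot g(\alpha) \cdot fg(\alpha) = \bigl(\alpha \cdot f(\alpha) \cdot gf(\alpha)\bigr) \cdot q = 0$ as well.

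The main obstacle is not any single computation but keeping the bookkeeping honest: one must verify that at every stage the relevant instance of condition~\eqref{e:star} is the one governing the pair of orbit sizes that actually appears, so that the length increment is genuinely bounded below, rather than merely being non-negative. Once the indexing in Lemma~\ref{l:1gfg_star} is tracked correctly through the iteration — in particular that the role of "$f(\alpha)$" in that lemma is played by $f(\alpha^{(k)})$ at step $k$ — the unbounded-length claim, and with it the vanishing, is immediate.
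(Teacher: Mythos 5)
Your argument is correct and is essentially the paper's own proof: iterate the two substitutions of Lemma~\ref{l:1gfg_star} to equate the product with arbitrarily long paths and conclude vanishing from completeness of $\gL$. The only (harmless) slip is in the bookkeeping: starting from the $fgf$-form, each of your rounds actually adds $(n_{\overline{\alpha^{(k)}}}-3)+(n_{f(\alpha^{(k+1)})}-3)$ rather than the bracket you wrote, while a single use of~\eqref{e:star} at $f(\alpha^{(k)})$ (via $n_{f^2(\alpha^{(k)})}=n_{\overline{\alpha^{(k)}}}$, not $n_{\overline{f(\alpha^{(k)})}}$) controls the pair $(n_{\overline{\alpha^{(k)}}}-3)+(n_{f(\alpha^{(k)})}-3)$; regrouping the total, which contains all such pairs for $1\le k\le N-1$, still gives length at least $N-1$, and the paper sidesteps this by starting from $\alpha\cdot g(\alpha)\cdot fg(\alpha)$ and applying both substitutions for the same arrow in each round.
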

\begin{proof}
We show that the first expression vanishes in $\gL$. The proof for
the second is similar. Indeed, invoking~\eqref{e:1gfgq}
and~\eqref{e:1fgfq} we see that
\[
\alpha \cdot g(\alpha) \cdot fg(\alpha) = \alpha \cdot f(\alpha)
\cdot gf(\alpha) \cdot q = q' \cdot \alpha' \cdot g(\alpha') \cdot
fg(\alpha') \cdot q
\]
for some arrow $\alpha'$ and paths $q$, $q'$ not both trivial. Thus,
the path at the right hand side is strictly longer than the left
hand side and contains a subpath of the same form.

Set $\alpha_1 = \alpha'$. By repeating this process we get a
sequence $\{\alpha_m\}_{m \geq 1}$ of arrows and (scalar multiples
of) paths $q_m$, $q'_m$ whose lengths sum to at least $m$ such that
\[
\alpha \cdot g(\alpha) \cdot fg(\alpha) = q'_m \cdot \alpha_m \cdot
g(\alpha_m) \cdot fg(\alpha_m) \cdot q_m .
\]
Since $\alpha \cdot g(\alpha) \cdot fg(\alpha)$ is equal in $\gL$ to
an arbitrarily long path, we deduce that it must vanish in $\gL$.
\end{proof}

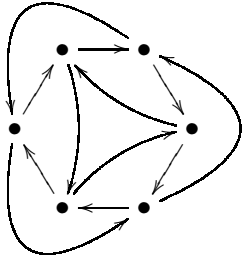
\begin{figure}
\[
\xymatrix@=0.5pc{
& {\bullet} \ar[rr] \ar@/^/[dddd]
&& {\bullet} \ar[ddr] \ar@/_3pc/[ddlll] \ar@{<-}@/^3pc/[dddd] \\ \\
{\bullet} \ar[uur] & && &
{\bullet} \ar[ddl] \ar@/^/[uulll] \ar@{<-}@/_/[ddlll] \\ \\
& {\bullet} \ar[uul]
&& {\bullet} \ar[ll] \ar@{<-}@/^3pc/[uulll]
}
\]
\caption{The quiver satisfying condition~\eqref{e:diamond}. It
arises from an ideal triangulation of a sphere with $4$ punctures
having property~\eqref{e:T3}.} \label{fig:Qdiamond}
\end{figure}

\begin{lemma} \label{l:diamond}
Assume that $Q$ satisfies~\eqref{e:diamond}. Then $Q$ is isomorphic
to the quiver shown in Figure~\ref{fig:Qdiamond}. In particular, it
has $6$ vertices, $12$ arrows and $4$ $g$-orbits. Moreover, for any
arrow $\alpha \in Q_1$, the arrows $\alpha$, $\balpha$, $f(\alpha)$
and $f(\balpha)$ belong to different $g$-orbits.
\end{lemma}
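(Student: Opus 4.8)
The plan is to feed the extra relation supplied by~\eqref{e:diamond} into the $\PSL_2(\bZ)$-action of Proposition~\ref{p:PSL2}. Recall that $\PSL_2(\bZ)=\langle x,y\mid x^2=(xy)^3=1\rangle$ acts on $Q_1$ via $x(\alpha)=\balpha$, $y(\alpha)=g(\alpha)$, with $xy$ acting as $f$, and that this action is transitive. Now \eqref{e:diamond} says precisely that $g^3=\mathrm{id}$, i.e.\ the relation $y^3=1$ also holds for this action, so the image $G$ of $\PSL_2(\bZ)$ in the permutations of $Q_1$ is a quotient of the $(2,3,3)$-triangle group $\langle x,y\mid x^2=y^3=(xy)^3=1\rangle$, which is the alternating group $A_4$ of order $12$ (the rotation group of the tetrahedron). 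Since $\balpha\neq\alpha$ for every $\alpha$, the element $x$ is non-trivial of order $2$ in $G$; as the only proper non-trivial quotient of $A_4$ is $\bZ/3$, which has no involution, I conclude $G\cong A_4$.

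Next I would show $A_4$ acts \emph{simply transitively} on $Q_1$. Transitivity gives $|Q_1|=12/|H|$ with $H$ the stabilizer of a fixed arrow $\alpha_0$; by Lemma~\ref{l:fgbar} we have $|Q_1|\ge 6$, so $|H|\le 2$. If $|H|=2$ then $H$ is generated by an involution, and since all involutions of $A_4$ are conjugate, $x$ would be conjugate into $H$ and hence fix some arrow, contradicting $\balpha\neq\alpha$. Thus $H$ is trivial, so $|Q_1|=12$, and since the two arrows at each vertex form a pair $\{\alpha,\balpha\}$ with $\balpha\neq\alpha$ we get $|Q_0|=6$; the $g$-orbits are the orbits of $\langle y\rangle$, so there are $12/3=4$ of them. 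Fixing a base arrow $\alpha_0$ and identifying $Q_1$ with $A_4$ via $w\mapsto w(\alpha_0)$, the maps $\bar{\ }$, $g$ and $f$ become left multiplication by $x$, $y$ and $xy$, an arrow $w$ runs from the coset $\langle x\rangle w$ to the coset $\langle x\rangle yw$, and a $g$-orbit is a left coset of $\langle y\rangle$; hence $Q$ together with $f,g$ is determined up to isomorphism by the data $(A_4;x,y)$. The quiver of Figure~\ref{fig:Qdiamond} is the adjacency quiver of an ideal triangulation of the $4$-punctured sphere satisfying~\eqref{e:T3} (for instance the tetrahedral one), which as observed above satisfies~\eqref{e:diamond}, so it is an instance of this construction and therefore isomorphic to every $Q$ satisfying~\eqref{e:diamond}.

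For the final assertion, under $Q_1\cong A_4$ the arrows $\alpha,\balpha,f(\alpha),f(\balpha)$ attached to $w$ are $w,xw,(xy)w,(xy)xw$, with $g$-orbits the left cosets $\langle y\rangle w$, $\langle y\rangle xw$, $\langle y\rangle(xy)w$, $\langle y\rangle(xy)xw$; cancelling $w$ on the right, I must check that $\langle y\rangle$, $\langle y\rangle x$, $\langle y\rangle xy$ and $\langle y\rangle xyx$ are pairwise distinct. This is a short check in $A_4$: the elements that would have to lie in $\langle y\rangle$ are $x$, $xy$, $xyx$ and $xyxy^{-1}x$; the cyclic group $\langle y\rangle$ of order $3$ contains no involution, which excludes $x$ and $xyxy^{-1}x$ and also $xy$ (since $xy\in\langle y\rangle$ would force $x=(xy)y^{-1}\in\langle y\rangle$), while $xyx\notin\langle y\rangle$ because otherwise $x$ would normalize $\langle y\rangle$, contradicting that a Sylow $3$-subgroup of $A_4$ is self-normalizing. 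As there are exactly four $g$-orbits, these four arrows represent all of them, and in particular lie in four distinct $g$-orbits.

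The one genuinely delicate point is ruling out an order-$2$ stabilizer in the second paragraph, i.e.\ getting $|Q_1|=12$ rather than $6$: this rests on $\balpha\neq\alpha$ (so $x$ is fixed-point-free) together with the conjugacy of all involutions of $A_4$, and the subtlety is that an \emph{abstract} transitive $A_4$-set on $6$ points does satisfy $y^3=1$, so~\eqref{e:diamond} alone does not exclude it --- one really needs the quiver structure. Everything else, namely matching the reconstructed quiver with Figure~\ref{fig:Qdiamond} and the coset bookkeeping at the end, is routine.
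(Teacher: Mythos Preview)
Your proof is correct and follows essentially the same $A_4$-action approach as the paper: both factor the $\PSL_2(\bZ)$-action of Proposition~\ref{p:PSL2} through $A_4$ via the extra relation $y^3=1$, use Lemma~\ref{l:fgbar} to bound $|Q_1|\ge 6$, rule out $|Q_1|=6$ to conclude that $Q_1$ is an $A_4$-torsor, and finish with a coset computation for $1,x,xy,xyx$ modulo $\langle y\rangle$. The one noteworthy difference is in excluding the $6$-arrow case: the paper does this by explicit inspection (drawing the would-be quiver and observing that $g^3(\alpha)=\balpha$, so it is in fact the once-punctured torus quiver and violates~\eqref{e:diamond}), whereas your argument via conjugacy of involutions in $A_4$ and the fixed-point-freeness of $x$ is a clean abstract substitute that avoids any picture.
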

\begin{proof}
Since $g^3$ acts on $Q_1$ as the identity, the $\PSL_2(\bZ)$-action
on $Q_1$ described in Proposition~\ref{p:PSL2} induces a transitive
action of the alternating group on four elements $A_4$ which has the
presentation $\langle x,y \,:\, x^2=(xy)^3=y^3=1 \rangle$.

Therefore the number of arrows in $Q$ divides $12$. By
Lemma~\ref{l:fgbar} there are at least $6$ arrows in $Q$. The case
of $6$ arrows is impossible since then $Q$ would look like as
\[
\xymatrix@=1.5pc{
& {\bullet} \ar@<0.5ex>[ddr]^{f^2(\alpha)=g^2(\alpha)} \ar@<-0.5ex>[ddr]
\\ \\
{\bullet} \ar@<0.5ex>[uur]^{f(\alpha)} \ar@<-0.5ex>[uur]_{g(\alpha)} &&
{\bullet} \ar@<0.5ex>[ll]^{\alpha} \ar@<-0.5ex>[ll]_{\balpha}
}
\]
and $g^3(\alpha)=\balpha$ would imply that $Q$ does not
satisfy~\eqref{e:diamond} (as a side remark we note that such $Q$
arises from a triangulation of a once-punctured torus, and then the
$\PSL_2(\bZ)$-action induces an action of $\bZ/6\bZ$ on $Q_1$).

Therefore $Q$ has $12$ arrows and $Q_1$ is a $A_4$-torsor. The
assertions now follow. For example, the last one follows from the
fact that in the presentation of $A_4$ given above, the elements
$1,x,xy,xyx$ belong to different right cosets of the cyclic subgroup
generated by $y$.
\end{proof}

\begin{lemma} \label{l:1gfg_zero_diamond}
Assume that $Q$ satisfies~\eqref{e:diamond} and that $\prod_{\alpha
\in \Omega} c_\alpha \neq 1$, where $\Omega$ contains one
representative from each $g$-orbit. Then for any arrow $\alpha \in
Q_1$, we have
\[
\alpha \cdot g(\alpha) \cdot fg(\alpha) = 0 \quad \text{and} \quad
\alpha \cdot f(\alpha) \cdot gf(\alpha) = 0
\]
in the completed Jacobian algebra $\gL$.
\end{lemma}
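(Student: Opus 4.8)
The plan is to specialize the general relations \eqref{e:1gfg} and \eqref{e:1fgf} to the present situation and feed them into a short circular argument. Since $Q$ satisfies \eqref{e:diamond} we have $n_\gamma = 3$ for every $\gamma \in Q_1$, so all the ``$\ldots$'' in \eqref{e:1gfg} and \eqref{e:1fgf} collapse and these relations read, for every $\alpha \in Q_1$,
\[
\alpha \cdot g(\alpha) \cdot fg(\alpha) = c_{f(\alpha)}\, \alpha \cdot f(\alpha) \cdot gf(\alpha), \qquad \alpha \cdot f(\alpha) \cdot gf(\alpha) = c_{\balpha}\, \balpha \cdot g(\balpha) \cdot fg(\balpha).
\]
Writing $P(\alpha) := \alpha \cdot g(\alpha) \cdot fg(\alpha)$, the two identities combine to $P(\alpha) = c_{f(\alpha)}\, c_{\balpha}\, P(\balpha)$ in $\gL$, and the second one is exactly $\alpha \cdot f(\alpha) \cdot gf(\alpha) = c_{\balpha}\, P(\balpha)$.

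Next I would iterate this once more with $\balpha$ in place of $\alpha$. The key elementary observation is that $\overline{\balpha} = \alpha$: both $\alpha$ and $\balpha$ start at the same vertex, so the ``other arrow'' starting there, computed from $\balpha$, is again $\alpha$. Hence $P(\balpha) = c_{f(\balpha)}\, c_{\alpha}\, P(\alpha)$, and substituting back gives the self-referential identity
\[
P(\alpha) = \bigl( c_{\alpha}\, c_{\balpha}\, c_{f(\alpha)}\, c_{f(\balpha)} \bigr)\, P(\alpha).
\]

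To finish, I would invoke Lemma~\ref{l:diamond}: the four arrows $\alpha$, $\balpha$, $f(\alpha)$, $f(\balpha)$ lie in four pairwise distinct $g$-orbits, and since \eqref{e:diamond} forces $Q$ to have exactly four $g$-orbits, these are all of them. As $c$ is constant on $g$-orbits, the scalar $c_{\alpha}\, c_{\balpha}\, c_{f(\alpha)}\, c_{f(\balpha)}$ equals precisely $\prod_{\gamma \in \Omega} c_\gamma$. By hypothesis this product is not $1$, so $(1 - \prod_{\gamma \in \Omega} c_\gamma)\, P(\alpha) = 0$ forces $P(\alpha) = 0$, that is, $\alpha \cdot g(\alpha) \cdot fg(\alpha) = 0$ for every $\alpha \in Q_1$. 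The second vanishing is then immediate: $\alpha \cdot f(\alpha) \cdot gf(\alpha) = c_{\balpha}\, P(\balpha) = 0$.

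The argument is short and its only content is bookkeeping. Unlike the case of \eqref{e:star} (Lemmas~\ref{l:1gfg_star} and~\ref{l:1gfg_zero_star}), where one produces arbitrarily long representatives of the path, here the $g$-orbits are all of size $3$ and one instead closes a loop of scalar multiplications. The single point worth double-checking is that the product of the four scalars occurring in the self-referential identity for $P(\alpha)$ is exactly the full product $\prod_{\gamma \in \Omega} c_\gamma$ over the four punctures; this is where Lemma~\ref{l:diamond} is used and where the hypothesis $\prod_{\gamma \in \Omega} c_\gamma \neq 1$ genuinely enters, explaining why the $4$-punctured sphere is exceptional.
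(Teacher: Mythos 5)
Your proposal is correct and follows essentially the same route as the paper: both specialize the relations \eqref{e:1gfg} and \eqref{e:1fgf} to the case $n_\gamma=3$, chain them around the four-step loop $\alpha \to f(\alpha)$-version $\to \balpha \to f(\balpha)$-version back to $\alpha \cdot g(\alpha)\cdot fg(\alpha)$, and use Lemma~\ref{l:diamond} to identify the accumulated scalar with $\prod_{\gamma\in\Omega} c_\gamma \neq 1$. Your explicit remark that $\overline{\balpha}=\alpha$ and that the four $g$-orbits exhaust $\Omega$ is just a slightly more detailed bookkeeping of what the paper states tersely.
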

\begin{proof}
By the previous lemma, there are four $g$-orbits and $\alpha$,
$\balpha$, $f(\alpha)$, $f(\balpha)$ lie in different $g$-orbits,
therefore by our assumption $c_\alpha c_{\balpha} c_{f(\alpha)}
c_{f(\balpha)} \neq 1$.

Now by repeatedly applying~\eqref{e:1gfg} and~\eqref{e:1fgf} we get
\[
\begin{split}
\alpha \cdot g(\alpha) \cdot fg(\alpha) &=
c_{f(\alpha)} \, \alpha \cdot f(\alpha) \cdot gf(\alpha) =
c_{f(\alpha)} c_{\balpha} \, \balpha \cdot g(\balpha) \cdot fg(\balpha) \\
&= c_{f(\alpha)} c_{\balpha} c_{f(\balpha)} \,
\balpha \cdot f(\balpha) \cdot gf(\balpha) =
c_{f(\alpha)} c_{\balpha} c_{f(\balpha)} c_\alpha \,
\alpha \cdot g(\alpha) \cdot fg(\alpha)
\end{split}
\]
and the result follows.
\end{proof}

\section{Jacobian algebras from ``nice'' triangulations}
\label{sec:algebra}

As in the previous section, we consider quivers with potential
$(Q,W)$ of the following form: $Q$ is any quiver with the
combinatorial properties described in Proposition~\ref{p:quiver},
and $W$ is the potential given by the formula~\eqref{e:W} in the
statement of Proposition~\ref{p:potential}.

In this section, we investigate the (completed) Jacobian algebra
$\gL = \cP(Q,W)$ under one of the following additional hypotheses:
\begin{itemize}
\item
$Q$ satisfies the condition~\eqref{e:star}; or

\item
$Q$ satisfies the condition~\eqref{e:diamond} and $\prod_{\alpha \in
\Omega} c_\alpha \neq 1$, where $\Omega$ contains one representative
from each $g$-orbit;
\end{itemize}
(so that the conclusion of Proposition~\ref{p:1gfg_zero} holds) and
show that $\gL$ is finite-dimensional, symmetric, and the potential
is not rigid. In addition we compute the Cartan matrix and the
center of $\gL$.

\subsection{Finite dimensionality}

\begin{lemma} \label{l:ffgg_zero}
For any arrow $\alpha \in Q_1$ we have in the completed Jacobian
algebra $\gL$
\[
\alpha \cdot f(\alpha) \cdot f^2(\alpha) \cdot \alpha = 0 \quad
\text{and} \quad \alpha \cdot g(\alpha) \cdot \ldots \cdot
g^{n_\alpha-1}(\alpha) \cdot \alpha = 0 .
\]
\end{lemma}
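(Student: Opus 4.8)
The plan is to reduce the two identities to a single one and to derive it from Proposition~\ref{p:1gfg_zero}. For the reduction, relation~\eqref{e:1ff2} gives $\alpha \cdot f(\alpha) \cdot f^2(\alpha) = c_\alpha\, \alpha \cdot g(\alpha) \cdots g^{n_\alpha-1}(\alpha)$ in $\gL$; right-multiplying both sides by $\alpha$ (legitimate, since both sides are cyclic paths based at the starting vertex $i$ of $\alpha$) and using $c_\alpha \in K^\times$ shows that the first identity of the lemma holds if and only if the second one does. Hence it suffices to prove $\alpha \cdot f(\alpha) \cdot f^2(\alpha) \cdot \alpha = 0$.

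To that end I would rewrite the tail $f^2(\alpha)\cdot\alpha$ using Lemma~\ref{l:1f} applied to $\beta = f^2(\alpha)$, for which $f(\beta) = f^3(\alpha) = \alpha$ and $\overline{\beta} = \overline{f(f(\alpha))} = g(f(\alpha)) = gf(\alpha)$ by Lemma~\ref{l:bar}. Since $gf(\alpha)$ lies in the $g$-orbit of $f(\alpha)$ and the functions $c$ and $n$ are constant on $g$-orbits, Lemma~\ref{l:1f} gives
\[
f^2(\alpha)\cdot\alpha \;=\; c_{f(\alpha)}\; gf(\alpha)\cdot g^2 f(\alpha)\cdots g^{\,n_{f(\alpha)}-1} f(\alpha)
\]
in $\gL$. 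Substituting this into the product $\alpha \cdot f(\alpha) \cdot (f^2(\alpha)\cdot\alpha)$ yields
\[
\alpha\cdot f(\alpha)\cdot f^2(\alpha)\cdot\alpha \;=\; c_{f(\alpha)}\;\alpha\cdot f(\alpha)\cdot gf(\alpha)\cdot g^2 f(\alpha)\cdots g^{\,n_{f(\alpha)}-1} f(\alpha),
\]
a path whose initial subpath of length three is exactly $\alpha\cdot f(\alpha)\cdot gf(\alpha)$. By Proposition~\ref{p:1gfg_zero} — available under the standing hypotheses of Section~\ref{sec:algebra} — this subpath equals $0$ in $\gL$, and hence so does the whole product.

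The argument is short and I do not foresee a genuine obstacle: the content of the lemma is essentially a one-line consequence of Proposition~\ref{p:1gfg_zero} once the tail $f^2(\alpha)\cdot\alpha$ has been rewritten. The only points requiring care are the bookkeeping with the maps $f$, $g$ and $\bar{\ }$ — in particular the identity $\overline{f^2(\alpha)} = gf(\alpha)$ — and the verification that $\alpha\cdot f(\alpha)\cdot gf(\alpha)\cdot g^2 f(\alpha)\cdots$ really is a path in $Q$, so that its initial three-arrow subpath controls its vanishing; both follow immediately from Proposition~\ref{p:quiver} and Lemma~\ref{l:bar}.
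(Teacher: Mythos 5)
Your proof is correct, and it follows the same overall strategy as the paper: use \eqref{e:1ff2} to reduce the two identities to a single one, then kill the four--arrow cycle by exhibiting inside it a three--arrow subpath of the form annihilated by Proposition~\ref{p:1gfg_zero}. The only difference is in how that subpath is produced. The paper first replaces the head via \eqref{e:1ff2}, writing $\alpha \cdot f(\alpha) \cdot f^2(\alpha) \cdot \alpha = \balpha \cdot f(\balpha) \cdot f^2(\balpha) \cdot \alpha$, and then observes that $\alpha = gf^2(\balpha)$ (Lemma~\ref{l:bar}), so the tail $f(\balpha)\cdot f^2(\balpha)\cdot \alpha$ is literally $\beta\cdot f(\beta)\cdot gf(\beta)$ with $\beta=f(\balpha)$; no further rewriting is needed. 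You instead keep the head and expand the tail $f^2(\alpha)\cdot\alpha$ through Lemma~\ref{l:1f} (using $\overline{f^2(\alpha)}=gf(\alpha)$ and the constancy of $c$ and $n$ on $g$-orbits), which exposes $\alpha\cdot f(\alpha)\cdot gf(\alpha)$ at the front. Your route costs one extra application of the cyclic-derivative relation but avoids the identity $\alpha=gf^2(\balpha)$; all the bookkeeping steps you invoke ($f^3=\mathrm{id}$, Lemma~\ref{l:bar}(b), composability of the $g$-path) check out, so this is a perfectly sound minor variant of the paper's argument.
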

\begin{proof}
In view of~\eqref{e:1ff2}, it is enough to show that one of these
expressions vanishes, since the other is a scalar multiple of it.
Using~\eqref{e:1ff2} again, we get
\[
\alpha \cdot f(\alpha) \cdot f^2(\alpha) \cdot \alpha = \balpha
\cdot f(\balpha) \cdot f^2(\balpha) \cdot \alpha
\]
which vanishes by Proposition~\ref{p:1gfg_zero} applied to
$f(\balpha)$, noting that $\alpha=gf^2(\balpha)$.
\end{proof}

\begin{prop} \label{p:findim}
The algebra $\gL$ is finite-dimensional. It has a basis consisting
of the paths
\[
\left\{e_i\right\}_{i \in Q_0} \cup \left\{\alpha \cdot g(\alpha)
\cdot \ldots \cdot g^r(\alpha)\right\}_{\alpha \in Q_1,\, 0 \leq r <
n_\alpha-1} \cup \{z_i\}_{i \in Q_0} .
\]
\end{prop}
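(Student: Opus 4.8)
The plan is to establish that the indicated set both spans $\gL$ and is linearly independent, exploiting the relations already derived in Section~\ref{sec:rels} (in particular the vanishing statements of Proposition~\ref{p:1gfg_zero} and Lemma~\ref{l:ffgg_zero}) together with the combinatorial structure of the maps $f$, $g$ and $\bar{\ }$ from Proposition~\ref{p:quiver} and Lemma~\ref{l:bar}.

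First I would prove the spanning statement. Working in $\widehat{KQ}$, every path is a composition of arrows, and at each vertex there are exactly two outgoing arrows (Proposition~\ref{p:quiver}\eqref{it:degs}); if a path has length $\geq 2$ and its first two arrows are $\beta$ followed by $\gamma$, then $\gamma$ starts where $\beta$ ends, so by Proposition~\ref{p:quiver}\eqref{it:fg_def} either $\gamma = f(\beta)$ or $\gamma = g(\beta)$. The idea is to use the relations to push every path into the normal form $\alpha\cdot g(\alpha)\cdots g^r(\alpha)$ (a ``$g$-path''). Whenever an $f$-step $f(\beta)$ occurs inside a path, Lemma~\ref{l:1f} rewrites $\beta\cdot f(\beta)$ as (a scalar times) a $g$-path $\bbeta\cdot g(\bbeta)\cdots g^{n_{\bbeta}-2}(\bbeta)$, which starts at the same vertex as $\beta$; thus every path is congruent to a $K$-linear combination of $g$-paths. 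A $g$-path $\alpha\cdot g(\alpha)\cdots g^r(\alpha)$ with $r \geq n_\alpha - 1$ is, by Lemma~\ref{l:ffgg_zero} (the second vanishing, which makes $z_i\cdot\alpha = 0$), either zero (when $r \geq n_\alpha$) or equal to the cycle $z_i$ (when $r = n_\alpha - 1$). Hence every path reduces to one of the $e_i$, the $g$-paths with $0 \leq r < n_\alpha - 1$, or the $z_i$; since the two-sided ideal defining $\gL$ is closed and these reductions are length-increasing at each step, infinite $K$-linear combinations collapse as well, so the listed set spans $\gL$ topologically, hence (being finite) spans it as a vector space — which already yields finite-dimensionality.

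The harder half is linear independence, for which the natural route is to exhibit a concrete algebra surjecting onto (or isomorphic to) the span, or to construct enough functionals. I would argue that no further relations can collapse the proposed basis: the elements $z_i$ are nonzero because, e.g., the associated graded of $\gL$ with respect to the path-length filtration is a quotient of $KQ$ by an \emph{admissible} ideal containing the relations $f(\alpha)f^2(\alpha) - c_\alpha g(\alpha)\cdots g^{n_\alpha-1}(\alpha)$ and $\alpha\cdot g(\alpha)\cdot fg(\alpha)$, $\alpha\cdot f(\alpha)\cdot gf(\alpha)$, and the monomials $z_i\cdot\alpha$; a dimension count shows that in this graded quotient the $g$-paths of each admissible length together with the $z_i$ and $e_i$ remain nonzero and independent, because any proposed additional relation among them would, after applying $\bar{\ }$, $f$, $g$, force one of the already-known relations to be non-homogeneous or would contradict the $\PSL_2(\bZ)$-transitivity of Proposition~\ref{p:PSL2} on the ``top-degree'' cycles. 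Concretely, the relations we have listed are exactly the directional derivatives of $W$ together with their consequences, and one checks they form a Gröbner-type basis with respect to a length-compatible admissible order, so the irreducible paths are precisely the claimed set.

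**The main obstacle** I anticipate is the linear independence, specifically verifying that the derived relations are \emph{complete} — that the reduction procedure terminates at a genuine $K$-basis and not at a spanning set with hidden dependencies. The delicate point is that both $\alpha\cdot g(\alpha)\cdots g^{n_\alpha-1}(\alpha)$ and $\balpha\cdot g(\balpha)\cdots g^{n_{\balpha}-1}(\balpha)$ equal $z_i$, so one must check that the two $f$-cycles and the two $g$-cycles through each vertex collapse to a \emph{single} one-dimensional space and that these $|Q_0|$ classes $z_i$ are themselves linearly independent and not further annihilated — this is where the hypothesis behind Proposition~\ref{p:1gfg_zero} (either \eqref{e:star} or \eqref{e:diamond} with $\prod c_\alpha \neq 1$) is essential, as it is precisely what prevents the longer mixed $f$-$g$ cycles from forcing $z_i = 0$. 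I expect the cleanest way to close this gap is to produce the associated graded algebra explicitly and count dimensions degree by degree, checking against the known $g$-orbit sizes $n_\alpha$, which I would defer to the computation of the Cartan matrix in Proposition~\ref{p:Cartan}.
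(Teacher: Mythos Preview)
Your spanning argument has a genuine gap in its termination. When you rewrite $\beta\cdot f(\beta)$ as $c_{\bbeta}\,\bbeta\cdot g(\bbeta)\cdots g^{n_{\bbeta}-2}(\bbeta)$ via Lemma~\ref{l:1f}, the arrow immediately to the left of $\beta$ (say $\gamma$, with $\beta=g(\gamma)$) now precedes $\bbeta=f(\gamma)$: you have \emph{created} a new $f$-step one position earlier. Your claim that ``these reductions are length-increasing at each step'' is what would rescue this, but it is false whenever $n_{\bbeta}=3$, since then the rewritten segment has the same length $2$. Under hypothesis~\eqref{e:diamond} this happens at every step, so the procedure can cycle indefinitely without the length growing, and the completeness argument collapses.

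The paper avoids this entirely by using Proposition~\ref{p:1gfg_zero} \emph{before} any rewriting: any path containing consecutive steps of different types (an $f$-step followed by a $g$-step, or vice versa) contains a subpath $\alpha\cdot g(\alpha)\cdot fg(\alpha)$ or $\alpha\cdot f(\alpha)\cdot gf(\alpha)$ and therefore vanishes in $\gL$. Hence every nonzero path is already a pure $g$-path or a pure $f$-path; Lemma~\ref{l:ffgg_zero} then bounds their lengths, and only afterwards does one invoke Lemma~\ref{l:1f} to convert the finitely many surviving $f$-paths $\alpha\cdot f(\alpha)$ and $\alpha\cdot f(\alpha)\cdot f^2(\alpha)$ into $g$-paths. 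No iteration is needed.

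On linear independence: the paper is terse here too, simply asserting that the only relations among the surviving $g$-paths are the commutativity relations~\eqref{e:1ff2}. Your instinct that this is the delicate point is sound, but your proposed fallback of deferring to Proposition~\ref{p:Cartan} is circular, since that computation presupposes the basis of the present proposition. A direct argument (e.g.\ checking that the listed relations close up under overlaps, or exhibiting an explicit module on which the claimed basis acts independently) is what is actually required.
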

\begin{proof}
To show the finite-dimensionality of $\gL$, it is enough to show
that the image of any sufficiently long path in $\widehat{KQ}$
vanishes.

Indeed, by Proposition~\ref{p:quiver}(\ref{it:fg_def}) such a path
can be written as $\alpha_0 \cdot \alpha_1 \cdot \ldots \cdot
\alpha_N$ where for every $0 \leq j < N$ we have $\alpha_{j+1} =
f(\alpha_j)$ or $\alpha_{j+1}=g(\alpha_j)$.

Now Proposition~\ref{p:1gfg_zero} and Lemma~\ref{l:ffgg_zero} tell
us that the only paths whose image in $\gL$ is possibly non-zero are
of the form $\alpha \cdot g(\alpha) \cdot \ldots \cdot g^r(\alpha)$
for some $0 \leq r \leq n_\alpha-1$ or paths of the form $\alpha
\cdot f(\alpha)$ or $\alpha \cdot f(\alpha) \cdot f^2(\alpha)$. As
there are only finitely many such paths, this shows the
finite-dimensionality of $\gL$.

By Lemma~\ref{l:1f} and~\eqref{e:1ff2}, any path $\beta \cdot
f(\beta)$ or $\beta \cdot f(\beta) \cdot f^2(\beta)$ can be
expressed as (scalar multiple) of a path of the form $\alpha \cdot
g(\alpha) \cdot \ldots \cdot g^r(\alpha)$ for suitable $\alpha \in
Q_1$ and $r \geq 0$. Therefore the algebra $\gL$ is spanned by the
trivial paths $e_i$ for each vertex $i \in Q_0$ together with the
paths $\alpha \cdot g(\alpha) \cdot \ldots \cdot g^r(\alpha)$ for
$\alpha \in Q_1$ and $0 \leq r \leq n_\alpha-1$. The only relations
among these paths are the commutativity relations in~\eqref{e:1ff2},
hence when forming a basis for $\gL$ we may take only those paths
with $r<n_\alpha-1$ and add the cycle $z_i$ for each $i \in Q_0$.
\end{proof}

\begin{remark}
The algebra $\gL$ can therefore be written as a quiver with
relations $\gL \simeq KQ/I$. The description of the ideal $I
\subseteq KQ$ depends on our hypothesis on $Q$; if $Q$ satisfies the
condition~\eqref{e:diamond}, then
\[
I = \left\langle \alpha \cdot f(\alpha) - c_{\balpha} \balpha \cdot
g(\balpha) \cdot \ldots \cdot g^{n_{\balpha}-2}(\balpha) \,:\,
\alpha \in Q_1 \right\rangle ,
\]
whereas if $Q$ satisfies the condition~\eqref{e:star}, then
\[
I = \left\langle \alpha \cdot f(\alpha) - c_{\balpha} \balpha \cdot
g(\balpha) \cdot \ldots \cdot g^{n_{\balpha}-2}(\balpha),\, \beta
\cdot f(\beta) \cdot gf(\beta) \,:\, \alpha \in Q_1,\, \beta \in
\Theta \right\rangle
\]
where $\Theta \subseteq Q_1$ is a set of representatives of
$h$-orbits for the (invertible) map $h \colon Q_1 \to Q_1$ defined
by $h(\beta)=g^{-3}(\bbeta)$.
\end{remark}

\subsection{Non-rigidity}

\begin{prop} \label{p:nonrigid}
The potential $W$ is not rigid.
\end{prop}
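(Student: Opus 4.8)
The plan is to invoke the criterion of Derksen--Weyman--Zelevinsky~\cite{DWZ08}: the quiver with potential $(Q,W)$ is rigid if and only if every cycle in $Q$ is cyclically equivalent to an element of the Jacobian ideal. Thus if $(Q,W)$ were rigid, then the image in the quotient $\gL/\overline{[\gL,\gL]}$ of every cyclic word in $\widehat{KQ}$ would vanish, because the Jacobian ideal is sent to $0$ in $\gL$ and the (closure of the) commutator subspace of $\widehat{KQ}$ is sent into $\overline{[\gL,\gL]}$. Hence it suffices to produce a single cycle in $Q$ whose image in $\gL/\overline{[\gL,\gL]}$ is non-zero.

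I would take this cycle to be the $3$-cycle $\alpha\cdot f(\alpha)\cdot f^2(\alpha)$ for an arbitrary arrow $\alpha$, whose image in $\gL$ is the element $z_i$ attached to the vertex $i$ at which $\alpha$ starts. The first point is that $z_i$ lies in the socle of the indecomposable projective $e_i\gL$: by Lemma~\ref{l:ffgg_zero} applied to $\alpha$, and applied to $\balpha$ after rewriting $z_i$ via~\eqref{e:1ff2}, the element $z_i$ is annihilated on the right by both arrows starting at $i$, hence by $\operatorname{rad}\gL$. Since $\gL$ is finite-dimensional (Proposition~\ref{p:findim}), $z_i$ is a non-zero basis vector lying in $e_i\gL e_i\cap\operatorname{soc}(e_i\gL)$.

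Next I would use that $\gL$ is symmetric (Proposition~\ref{p:symmetric}, whose proof does not rely on non-rigidity). For a symmetric algebra the socle of each indecomposable projective is one-dimensional and the symmetrizing functional $t\colon\gL\to K$ is non-zero on it, so $t(z_i)\neq0$; moreover the associated non-degenerate bilinear form satisfies $\overline{[\gL,\gL]}=[\gL,\gL]=Z(\gL)^{\perp}$. Pairing $z_i$ with the central element $1=\sum_{j\in Q_0}e_j$ gives $(z_i,1)=t(z_i)\neq0$, so $z_i\notin[\gL,\gL]$; therefore the image of $\alpha\cdot f(\alpha)\cdot f^2(\alpha)$ in $\gL/\overline{[\gL,\gL]}$ is non-zero and $W$ is not rigid.

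The one delicate ingredient is the appeal to symmetry, and one may wish to keep this proposition logically prior to Proposition~\ref{p:symmetric}. To make the argument self-contained it suffices to show directly that $z_i\notin[\gL,\gL]$, and for this one defines the linear functional $\varphi\colon\gL\to K$ sending each $z_j$ to $1$ and every other basis vector of Proposition~\ref{p:findim} to $0$, and checks that $\varphi(uv)=\varphi(vu)$ for all pairs $u,v$ of basis vectors. When $u$ or $v$ is an idempotent this is immediate, and when $u$ or $v$ is some $z_j$ both products vanish because $z_j$ is in the socle on both sides (Lemma~\ref{l:ffgg_zero} and~\eqref{e:1ff2}). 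The only real work is the case of two $g$-paths $u=\alpha\cdot g(\alpha)\cdots g^{r}(\alpha)$ and $v=\beta\cdot g(\beta)\cdots g^{s}(\beta)$: here one must track the reductions of Proposition~\ref{p:findim} (using Lemma~\ref{l:1f}, the commutativity relations~\eqref{e:1ff2} and the vanishing of Proposition~\ref{p:1gfg_zero}) and observe that the coefficient with which the $z$'s occur in the reduced form of the product depends only on the underlying cyclic word, hence is unaffected by the rotation carrying $uv$ to $vu$. I expect this last verification to be the main obstacle --- unsurprisingly, since it is in essence the content of the symmetry statement.
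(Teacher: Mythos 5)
Your argument is correct, and its skeleton is the same as the paper's: you invoke the rigidity criterion of \cite{DWZ08} and test it on the $3$-cycle $W'=\alpha\cdot f(\alpha)\cdot f^2(\alpha)$, whose image in $\gL$ is the socle element $z_i$. The difference is in how the failure of cyclic equivalence is certified. The paper stops at the observation that $z_i\neq 0$ in $\gL$ (noting the same for every such $3$-cycle, i.e.\ for all rotations of $W'$) and concludes directly; you instead pass to the trace space $\gL/\overline{[\gL,\gL]}$ and use the symmetric structure of Proposition~\ref{p:symmetric} to show that $z_i$ survives there. This proves the formally stronger statement that is literally what the criterion requires, since a cyclic equivalence may add rotation differences of arbitrary cycles, not only rotations of $W'$; so your route makes fully explicit a step the paper treats very briefly. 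The cost is the dependence on Proposition~\ref{p:symmetric}, which, as you observe, is not circular because its proof nowhere uses rigidity; moreover the abstract facts you quote can be made concrete here by taking the symmetrizing functional $t=\Phi^{-1}(1)=\sum_{j\in Q_0}z_j^{\vee}$, so that $t(z_i)=1$, $t(ab)=t(ba)$ by the bimodule property of $\Phi$, and $\overline{[\gL,\gL]}=[\gL,\gL]\subseteq\ker t$ because $\gL$ is finite-dimensional. Your closing ``self-contained'' variant, building a trace functional by hand, is superfluous once Proposition~\ref{p:symmetric} is available, and its unresolved main case (the product of two $g$-paths) is precisely the content of that proposition, so it should be omitted rather than left as a sketch; with the first argument alone the proof is complete.
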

\begin{proof}
By~\cite[\S8]{DWZ08}, in order to show that $W$ is not rigid, one
has to find a potential on $Q$ which is not cyclically equivalent to
an element in the Jacobian ideal of $W$.

Indeed, consider a potential $W'$ in $\widehat{KQ}$ which is a
$3$-cycle $W' = \alpha \cdot f(\alpha) \cdot f^2(\alpha)$ for some
arrow $\alpha \in Q_1$ starting at some vertex $i$. The image of
$W'$ in $\gL$ is $z_i \neq 0$, hence it does not belong to the
Jacobian ideal of $W$. Since this holds for any such $3$-cycle, we
deduce that $W'$ is not cyclically equivalent to an element of the
Jacobian ideal of $W$, hence $W$ is not rigid.
\end{proof}

\subsection{Symmetry}
For a finite-dimensional algebra $\gL$ the space $D\gL =
\Hom_K(\gL,K)$ of $K$-linear functionals on $\gL$ is a
$\gL$-$\gL$-bimodule via
\begin{align*}
(\vphi \gl)(x) = \vphi(\gl x) && (\gl \vphi)(x) = \vphi(x \gl)
\end{align*}
for $\vphi \in D\gL$ and $\gl, x \in \gL$. The algebra $\gL$ is
called \emph{symmetric} if $D\gL$ and $\gL$ are isomorphic as
$\gL$-$\gL$-bimodules. For an element $\gl \in \gL$, define a dual
element $\gl^{\vee} \in D\gL$ by
\[
\gl^{\vee}(x) =
\begin{cases}
a & \text{if $x = a\gl$ for some $a \in K$} \\
0 & \text{otherwise}
\end{cases}
\]
so that $(c \lambda)^\vee = c^{-1} \lambda^\vee$ for any $c \in
K^{\times}$.

Let $\gL = \cP(Q,W)$. There is a duality between paths which can be
extended to a $K$-linear isomorphism $\Phi : D\gL \xrightarrow{\sim}
\gL$ defined in the following way. Any non-zero path in $\gL$ has
the form $p = \alpha \cdot g(\alpha) \cdot \ldots \cdot
g^{r-1}(\alpha)$ for some $\alpha \in Q_1$ and $0 \leq r \leq
n_\alpha$ (here, $r=0$ means the path of length $0$ corresponding to
the starting vertex of $\alpha$). The path $p$ can be completed to a
cycle $p \cdot q$ along a $g$-orbit and we define $\Phi(p^\vee)$ to
be the multiple of $q$ by the scalar corresponding to that
$g$-orbit. More precisely,
\begin{equation} \label{e:Phi}
\Phi((\alpha \cdot g(\alpha) \cdot \ldots \cdot
g^{r-1}(\alpha))^\vee) = c_\alpha g^r(\alpha) \cdot \ldots \cdot
g^{n_\alpha-1}(\alpha)
\end{equation}
which is well-defined by the identity~\eqref{e:1ff2}. In particular,
$\Phi(e_i^\vee) = z_i$ and $\Phi(z_i^\vee) = e_i$ for any $i \in
Q_0$. Since $D\gL$ has a basis
\[
\left\{e_i^\vee \right\}_{i \in Q_0} \cup \left\{ \left(\alpha \cdot
g(\alpha) \cdot \ldots \cdot g^{r-1}(\alpha) \right)^{\vee}
\right\}_{\alpha \in Q_1,\, 1 \leq r \leq n_\alpha-1} \cup
\left\{z_i^\vee \right\}_{i \in Q_0}
\]
$\Phi$ can be extended by linearity to a $K$-linear isomorphism
$\Phi : D\gL \to \gL$.

The next lemma shows that $\Phi$ has a similar completion property
with respect to $f$-orbits as well.

\begin{lemma} \label{l:Phi_f}
Let $\alpha \in Q_1$. Then
\begin{align*}
\Phi(\alpha^\vee) = f(\alpha) \cdot f^2(\alpha) && \Phi((\alpha
\cdot f(\alpha))^\vee) = f^2(\alpha) .
\end{align*}
\end{lemma}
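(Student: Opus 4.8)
The plan is to deduce both identities directly from the defining formula~\eqref{e:Phi} for $\Phi$ together with the basic relation from Lemma~\ref{l:1f}. The point is that $\Phi$ was defined by completing a path to a cycle \emph{along a $g$-orbit}, and we must re-express the short paths $\alpha$ and $\alpha\cdot f(\alpha)$ (which live along an $f$-orbit) as paths along a $g$-orbit so that~\eqref{e:Phi} applies.

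\emph{First identity.} Consider the path $\alpha \cdot f(\alpha)$ of length $2$. By Lemma~\ref{l:1f} we have $\alpha \cdot f(\alpha) = c_{\balpha}\, \balpha \cdot g(\balpha) \cdot \ldots \cdot g^{n_{\balpha}-2}(\balpha)$ in $\gL$, so up to the scalar $c_{\balpha}$ this path is of the standard form $\balpha \cdot g(\balpha) \cdot \ldots \cdot g^{r-1}(\balpha)$ with $r = n_{\balpha}-1$. Hence $(\alpha\cdot f(\alpha))^\vee = c_{\balpha}^{-1} (\balpha \cdot g(\balpha)\cdot\ldots\cdot g^{n_{\balpha}-2}(\balpha))^\vee$, and applying~\eqref{e:Phi} with $\alpha$ replaced by $\balpha$ and $r = n_{\balpha}-1$ gives
\[
\Phi((\alpha\cdot f(\alpha))^\vee) = c_{\balpha}^{-1}\cdot c_{\balpha}\, g^{n_{\balpha}-1}(\balpha) = g^{n_{\balpha}-1}(\balpha).
\]
By Lemma~\ref{l:fgbar}(\ref{it:f2}) we have $g^{n_{\balpha}-1}(\balpha) = f^2(\alpha)$, which is exactly the claimed value.

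\emph{Second identity.} Now treat $\alpha$ itself, a path of length $1$, i.e.\ of standard form with $r=1$. Here~\eqref{e:Phi} applies immediately: $\Phi(\alpha^\vee) = c_\alpha\, g(\alpha)\cdot g^2(\alpha)\cdot\ldots\cdot g^{n_\alpha-1}(\alpha)$. This is the ``complementary'' part of the $g$-cycle at the vertex $\alpha$ ends at, and by the relation~\eqref{e:ff2} (the derivative of $W$ with respect to $\alpha$) it equals $f(\alpha)\cdot f^2(\alpha)$ in $\gL$. Alternatively, one can apply the first identity to the arrow $f^{-1}(\alpha)$: taking $\alpha \rightsquigarrow f^{-1}(\alpha)$ turns $\Phi((f^{-1}(\alpha)\cdot\alpha)^\vee)=\alpha$ into a statement about $f(\alpha)\cdot f^2(\alpha)$; but the cleanest route is the direct one via~\eqref{e:ff2}.

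\emph{Main obstacle.} There is no deep difficulty here; the only thing to be careful about is the bookkeeping of scalars. The dual operation satisfies $(c\lambda)^\vee = c^{-1}\lambda^\vee$, so the scalar $c_{\balpha}$ picked up when rewriting $\alpha\cdot f(\alpha)$ via Lemma~\ref{l:1f} is inverted, and then reinstated by the $c_\alpha$-factor built into the definition~\eqref{e:Phi}; these must be checked to cancel exactly, as they do. One should also note that~\eqref{e:Phi} is well-defined precisely because of~\eqref{e:1ff2}, so no ambiguity arises from the choice of how to complete the $f$-path to a $g$-cycle. With these points noted, the lemma follows by inspection.
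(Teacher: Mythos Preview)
Your proof is correct and follows exactly the route the paper indicates: the identity for $\Phi(\alpha^\vee)$ comes straight from~\eqref{e:ff2}, and the identity for $\Phi((\alpha\cdot f(\alpha))^\vee)$ from Lemma~\ref{l:1f} together with Lemma~\ref{l:fgbar}(\ref{it:f2}), with the scalar bookkeeping via $(c\lambda)^\vee=c^{-1}\lambda^\vee$ handled correctly. The only cosmetic issue is that you labeled the two computations ``First identity'' and ``Second identity'' in the opposite order to how they appear in the statement of the lemma.
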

\begin{proof}
This follows from~\eqref{e:ff2}, Lemma~\ref{l:1f} and
Lemma~\ref{l:fgbar}(\ref{it:f2}).
\end{proof}

\begin{lemma} \label{l:ziZ}
Let $i \in Q_0$, $\alpha \in Q_1$. Then $\alpha \cdot z_i = 0$ and
$z_i \cdot \alpha = 0$.
\end{lemma}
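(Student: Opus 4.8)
The plan is to deduce both identities directly from Lemma~\ref{l:ffgg_zero}. First I would reduce to the relevant cases: since $z_i = e_i z_i e_i$, the product $\alpha \cdot z_i$ is already zero unless $\alpha$ ends at the vertex $i$, and $z_i \cdot \alpha$ is already zero unless $\alpha$ starts at $i$. So it suffices to treat the arrows incident to $i$.

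Suppose $\alpha$ starts at $i$. By the definition of $z_i$ (taking $\alpha$ itself as the representative of its $f$-orbit in the first of the four mutually equal expressions), we have $z_i = \alpha \cdot f(\alpha) \cdot f^2(\alpha)$, hence $z_i \cdot \alpha = \alpha \cdot f(\alpha) \cdot f^2(\alpha) \cdot \alpha$, which vanishes in $\gL$ by the first identity of Lemma~\ref{l:ffgg_zero}. (Alternatively, one could instead use the expression $z_i = c_\alpha\, \alpha \cdot g(\alpha) \cdot \ldots \cdot g^{n_\alpha-1}(\alpha)$ together with the second identity of Lemma~\ref{l:ffgg_zero}; it works equally well.)

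Now suppose $\alpha$ ends at $i$. Then $f(\alpha)$ starts at $i$, so applying the description of $z_i$ from the previous paragraph to the arrow $f(\alpha)$, and using $f^3 = \mathrm{id}$ on $Q_1$, we get $z_i = f(\alpha) \cdot f^2(\alpha) \cdot f^3(\alpha) = f(\alpha) \cdot f^2(\alpha) \cdot \alpha$. Therefore $\alpha \cdot z_i = \alpha \cdot f(\alpha) \cdot f^2(\alpha) \cdot \alpha = 0$, again by Lemma~\ref{l:ffgg_zero}.

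I do not expect a genuine obstacle here: the statement is essentially a repackaging of Lemma~\ref{l:ffgg_zero}, and the only points that need a little care are that the definition of $z_i$ permits either of the two arrows out of $i$ to serve as the representative of its $f$-orbit, and the use of $f^3 = \mathrm{id}$ to cyclically rewrite $z_i$ in the case where $\alpha$ ends at $i$.
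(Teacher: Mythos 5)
Your proof is correct and follows essentially the same route as the paper: writing $z_i$ as a $3$-cycle $\beta\cdot f(\beta)\cdot f^2(\beta)$ with $\beta$ starting at $i$ and invoking Lemma~\ref{l:ffgg_zero}. The paper only spells out the case $z_i\cdot\alpha=0$ and calls the other case similar; your rewriting $z_i=f(\alpha)\cdot f^2(\alpha)\cdot\alpha$ via $f^3=\mathrm{id}$ is exactly that similar argument made explicit.
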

\begin{proof}
We show only that $z_i \cdot \alpha = 0$. The proof of the other
claim is similar. Since $z_i$ is a cycle starting and ending at $i$,
it is enough to consider an arrow $\alpha$ starting at $i$. But then
we can write $z_i = \alpha \cdot f(\alpha) \cdot f^2(\alpha)$ and
the result follows from Lemma~\ref{l:ffgg_zero}.
\end{proof}

\begin{prop} \label{p:symmetric}
The Jacobian algebra $\gL$ of $(Q,W)$ is symmetric.
\end{prop}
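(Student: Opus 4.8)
The plan is to exhibit an explicit isomorphism of $\gL$-$\gL$-bimodules $D\gL \to \gL$. The natural candidate is the map $\Phi$ already constructed before the statement; what remains is to verify that $\Phi$ is not merely $K$-linear but is in fact a homomorphism of bimodules, i.e. that $\Phi(\vphi\gl) = \Phi(\vphi)\gl$ and $\Phi(\gl\vphi) = \gl\Phi(\vphi)$ for all $\gl \in \gL$ and $\vphi \in D\gL$. Since $\gL$ is generated as an algebra by the idempotents $e_i$ and the arrows $\alpha \in Q_1$, and since $\Phi$ is $K$-linear, it suffices to check these two identities when $\gl$ ranges over $\{e_i\}_{i \in Q_0} \cup Q_1$ and $\vphi$ ranges over the basis of $D\gL$ displayed above, namely the elements $e_i^\vee$, $z_i^\vee$, and $(\alpha \cdot g(\alpha) \cdots g^{r-1}(\alpha))^\vee$ for $1 \leq r \leq n_\alpha - 1$.

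First I would dispose of the idempotent cases: for $\gl = e_j$ one checks directly from the definition of the dual basis that $e_j \vphi^\vee$ and $\vphi^\vee e_j$ are again dual basis elements (or zero), and $\Phi$ visibly respects the idempotent decomposition since $\Phi((p)^\vee)$ lies in $e_{t(p)}\gL e_{s(p)}$ when $p \in e_{s(p)}\gL e_{t(p)}$. The substantive content is the case $\gl = \gamma$ an arrow. Here the key computation is, for a basis path $p$ and an arrow $\gamma$, to identify $\gamma \cdot p^\vee$ (respectively $p^\vee \cdot \gamma$) in $D\gL$ and compare $\Phi$ of it with $\gamma \cdot \Phi(p^\vee)$ (respectively $\Phi(p^\vee) \cdot \gamma$). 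By definition of the bimodule structure on $D\gL$, $(\gamma \cdot p^\vee)(x) = p^\vee(x\gamma)$, which is nonzero precisely when $x\gamma$ is a scalar multiple of $p$; so $\gamma \cdot p^\vee$ is a scalar multiple of the dual of "$p$ with its last arrow $\gamma$ removed," when $p$ ends in $\gamma$. Thus the whole verification reduces to the combinatorics of prepending/appending an arrow to a path along a $g$-orbit versus completing it within that orbit — exactly the content encoded in the completion formula \eqref{e:Phi} together with the $f$-orbit version in Lemma~\ref{l:Phi_f}. The identity \eqref{e:1ff2} guarantees consistency whenever a path can be completed along either a $g$-orbit or an $f$-orbit (the length-$0$, length-$1$ and length-$2$ paths).

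I expect the main obstacle to be the bookkeeping at the "boundary" of the orbits: the cases where appending an arrow to $p$ produces a full $g$-cycle (so the result should be the central element $z_i$ up to scalar, using $\Phi(e_i^\vee) = z_i$), where $p$ already has length $n_\alpha - 1$ so that prepending/appending lands among the $z_i$ or forces a relation from Proposition~\ref{p:1gfg_zero} to kick in, and where the same short path sits in both an $f$-orbit and a $g$-orbit. Lemma~\ref{l:ziZ} is precisely what handles the degenerate cases involving $z_i$: since $z_i$ annihilates every arrow on both sides, the bimodule identities hold trivially there. Once these boundary cases are checked against Lemma~\ref{l:bar} and Lemma~\ref{l:fgbar} (to track which arrow is "$\overline{\cdot}$" or $f^{-1}$ of which), the interior cases are a routine matching of both sides against the formula \eqref{e:Phi}. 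Having verified that $\Phi$ is a bimodule morphism, it is bijective by construction, so $D\gL \cong \gL$ as bimodules and $\gL$ is symmetric.
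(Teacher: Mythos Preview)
Your plan is correct and follows essentially the same approach as the paper: verify that the $K$-linear isomorphism $\Phi$ is a $\gL$-$\gL$-bimodule map by checking the identities $\Phi(p^\vee \cdot \gl) = \Phi(p^\vee)\cdot\gl$ and $\Phi(\gl\cdot p^\vee) = \gl\cdot\Phi(p^\vee)$ on the dual basis elements and on the algebra generators $e_i$, $\beta \in Q_1$, handling the boundary cases $p=e_i$, $p=z_i$, and $r=n_\alpha-1$ separately via Lemma~\ref{l:ziZ}, Lemma~\ref{l:Phi_f}, and Lemma~\ref{l:fgbar}. The paper carries out exactly this case analysis.
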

\begin{proof}
We show that the isomorphism of $K$-vector spaces $\Phi : D\gL \to
\gL$ is an isomorphism of $\gL$-$\gL$-bimodules. In other words, we
need to verify that for any path $p$ in $\gL$ and any $i \in Q_0$,
$\beta \in Q_1$ we have
\begin{align}
\label{e:Phi_ei}
\Phi(p^\vee \cdot e_i) &= \Phi(p^\vee) \cdot e_i &&
\Phi(e_i \cdot p^\vee) = e_i \cdot \Phi(p^\vee) \\
\label{e:Phi_beta}
\Phi(p^\vee \cdot \beta) &= \Phi(p^\vee) \cdot \beta &&
\Phi(\beta \cdot p^\vee) = \beta \cdot \Phi(p^\vee) .
\end{align}

If $p$ starts at $i$ and ends at $j$, then $\Phi(p)$ is a multiple
of a path starting at $j$ and ending at $i$. This
shows~\eqref{e:Phi_ei}. For~\eqref{e:Phi_beta}, we start by noting
that $p^\vee \cdot \beta = 0$ if $p$ cannot be written as a linear
combination of paths starting at $\beta$ and $p^\vee \cdot \beta =
q^\vee$ if $p$ can be written uniquely as $p = \beta q$, and
similarly for $\beta \cdot p^\vee$.

Let $p = e_i$ for $i \in Q_0$ and let $\beta \in Q_1$. Then
$e_i^\vee \cdot \beta = 0$, $\beta \cdot e_i^\vee = 0$
and~\eqref{e:Phi_beta} follows from Lemma~\ref{l:ziZ}.

Let $p = z_i$ for some $i \in Q_0$ and let $\alpha$, $\balpha$ be
the arrows starting at $i$. Then
\begin{align*}
z_i^\vee \cdot \beta &=
\begin{cases}
(c_\alpha g(\alpha) \cdot \ldots \cdot g^{n_\alpha-1}(\alpha))^\vee
& \text{if $\beta = \alpha$,} \\
(c_{\balpha} g(\balpha) \cdot \ldots \cdot
g^{n_{\balpha}-1}(\balpha))^\vee
& \text{if $\beta = \balpha$,} \\
0 & \text{otherwise}
\end{cases}
\\
\beta \cdot z_i^\vee &=
\begin{cases}
(c_\alpha \alpha \cdot g(\alpha) \cdot \ldots \cdot
g^{n_\alpha-2}(\alpha))^\vee
& \text{if $\beta = g^{n_\alpha-1}(\alpha)$,} \\
(c_{\balpha} \balpha \cdot g(\balpha) \cdot \ldots \cdot
g^{n_{\balpha}-2}(\balpha))^\vee
& \text{if $\beta = g^{n_{\balpha}-1}(\balpha)$,} \\
0 & \text{otherwise}
\end{cases}
\end{align*}
and~\eqref{e:Phi_beta} follows from~\eqref{e:Phi}.

Let $p = \alpha \cdot g(\alpha) \cdot \ldots \cdot g^{r-1}(\alpha)$
for some $\alpha \in Q_1$. If $1 \leq r < n_\alpha-1$, then
\begin{align*}
p^\vee \cdot \beta &=
\begin{cases}
(g(\alpha) \cdot \ldots \cdot g^{r-1}(\alpha))^\vee
& \text{if $\beta=\alpha$,} \\
0 & \text{otherwise}
\end{cases}
\\
\beta \cdot p^\vee &=
\begin{cases}
(\alpha \cdot g(\alpha) \cdot \ldots \cdot g^{r-2}(\alpha))^\vee
& \text{if $\beta=g^{r-1}(\alpha)$,} \\
0 & \text{otherwise}
\end{cases}
\end{align*}
and~\eqref{e:Phi_beta} follows from~\eqref{e:Phi}. Finally, if
$r=n_\alpha-1$ then $p=c_\alpha^{-1} \balpha \cdot f(\balpha)$ by
Lemma~\ref{l:1f} so that by Lemma~\ref{l:Phi_f},
$\Phi(p^\vee) = c_\alpha f^2(\balpha) = c_\alpha
g^{n_\alpha-1}(\alpha)$ and
\begin{align*}
p^\vee \cdot \beta &=
\begin{cases}
(g(\alpha) \cdot \ldots \cdot g^{n_\alpha-2}(\alpha))^\vee
& \text{if $\beta=\alpha$,} \\
c_\alpha f(\balpha)^\vee
& \text{if $\beta=\balpha$,} \\
0 & \text{otherwise}
\end{cases}
\\
\beta \cdot p^\vee &=
\begin{cases}
(\alpha \cdot g(\alpha) \cdot \ldots \cdot
g^{n_\alpha-3}(\alpha))^\vee
& \text{if $\beta=g^{n-2}(\alpha)$,} \\
c_\alpha \balpha^\vee
& \text{if $\beta=f(\balpha)$,} \\
0 & \text{otherwise}
\end{cases}
\end{align*}
thus~\eqref{e:Phi_beta} follows from~\eqref{e:Phi},
Lemma~\ref{l:fgbar}(\ref{it:f2}) and Lemma~\ref{l:Phi_f}.
\end{proof}

\subsection{The Cartan matrix}
Recall that the Cartan matrix of $\gL$ is a $|Q_0| \times |Q_0|$
matrix whose $(i,j)$-entry is the dimension of the space of paths in
$\gL$ starting at the vertex $i$ and ending at $j$.

Any puncture $p \in M$ defines a column vector $v_p \in \bZ^{Q_0}$
in the following way. Let $i_0,i_1,\dots,i_{n-1},i_n=i_0$ be the
sequence of arcs incident to $p$ traversed in a counterclockwise
order, so that $i_0 \to i_1 \to \dots \to i_{n-1} \to i_0$ is a
cycle whose arrows from a $g$-orbit. For any arc $i$ set $v_p(i)$ to
be the number of times $i$ appears in the sequence $(i_0, i_1,
\dots, i_{n-1})$. Set also $n_p=n$, or equivalently $n_p = \sum_{i
\in Q_0} v_p(i)$.

\begin{prop} \label{p:Cartan}
The Cartan matrix of $\gL$ is given by the formula
\[
C_{\gL} = \sum_{p \in M} v_p \cdot v^T_p
\]
or equivalently, $(C_\gL)_{ij} = \sum_{p \in M} v_p(i) v_p(j)$.
Moreover, $(C_\gL)_{ii} \in \{2,4\}$ and $(C_\gL)_{ij} \in
\{0,1,2,4\}$ for any $i,j \in Q_0$. In particular, $\dim_K \gL =
\sum_{p \in M} n_p^2$.
\end{prop}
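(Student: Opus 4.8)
The plan is to read off the Cartan matrix directly from the explicit basis of $\gL$ produced in Proposition~\ref{p:findim}. Recall that this basis consists of the trivial paths $e_i$, the ``partial $g$-cycles'' $\alpha \cdot g(\alpha) \cdot \ldots \cdot g^r(\alpha)$ for $\alpha \in Q_1$ and $0 \le r < n_\alpha - 1$, and the central elements $z_i$ for $i \in Q_0$. So to compute $(C_\gL)_{ij}$ I must count, among these three families, how many basis elements start at $i$ and end at $j$. I first record where each basis element starts and ends: $e_i$ starts and ends at $i$; $z_i$ starts and ends at $i$; and the path $p_{\alpha,r} = \alpha \cdot g(\alpha) \cdot \ldots \cdot g^r(\alpha)$ starts at the source of $\alpha$ and ends at the source of $g^{r+1}(\alpha)$, since by Proposition~\ref{p:quiver}\eqref{it:fg_def} the arrow $g^{r+1}(\alpha)$ starts where $g^r(\alpha)$ ends.

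The combinatorial heart of the argument is to reinterpret the indexing set $\{(\alpha, r) : \alpha \in Q_1,\ 0 \le r < n_\alpha - 1\}$ of partial $g$-cycles in terms of the $g$-orbits, i.e.\ in terms of punctures via Lemma~\ref{l:orbits}(b). Fix a $g$-orbit, corresponding to a puncture $p$, and write it out as the cyclic sequence of arrows coming from the arcs $i_0, i_1, \dots, i_{n-1}, i_n = i_0$ around $p$ as in the definition of $v_p$; here $n = n_p = n_\alpha$ for any $\alpha$ in this orbit. A partial $g$-cycle supported on this orbit is a path of the form $i_s \to i_{s+1} \to \dots \to i_{s+r+1}$ (indices mod $n$) with $0 \le r < n-1$, hence of positive length $\ell = r+1$ ranging over $1 \le \ell \le n-1$, and with arbitrary starting index $s$. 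Such a path starts at the arc $i_s$ and ends at the arc $i_{s+\ell}$. Thus, for fixed $i, j \in Q_0$, the number of partial $g$-cycles supported on the orbit of $p$ that start at $i$ and end at $j$ equals the number of pairs $(s, \ell)$ with $1 \le \ell \le n-1$, $i_s = i$, $i_{s+\ell} = j$. Counting these: $s$ must be one of the $v_p(i)$ positions where $i$ occurs, and $s + \ell$ one of the $v_p(j)$ positions where $j$ occurs; the only pairs $(s, s+\ell)$ of positions to exclude are those with $\ell \equiv 0 \pmod n$, i.e.\ $s = s+\ell$, which forces $i = j$ and there are exactly $v_p(i)$ such. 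Hence this count is $v_p(i) v_p(j)$ when $i \ne j$ and $v_p(i)^2 - v_p(i)$ when $i = j$.

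Summing over all punctures, the contribution of all partial $g$-cycles to $(C_\gL)_{ij}$ is $\sum_{p} v_p(i) v_p(j)$ for $i \ne j$, and $\sum_p (v_p(i)^2 - v_p(i))$ for $i = j$. Now I add the contributions of the $e_i$ and $z_i$: these contribute $0$ off the diagonal, and $2$ to each diagonal entry $(C_\gL)_{ii}$. Since every arc $i$ has exactly two endpoints (counted with multiplicity, an arc from a puncture to itself being counted twice), we have $\sum_p v_p(i) = 2$ for every $i \in Q_0$, so the diagonal contribution of the $e_i$ and $z_i$ exactly cancels the $-\sum_p v_p(i)$ correction term. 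Therefore $(C_\gL)_{ij} = \sum_{p \in M} v_p(i) v_p(j) = (\sum_p v_p v_p^T)_{ij}$ in all cases, which is the claimed formula. For the bounds: $(C_\gL)_{ii} = \sum_p v_p(i)^2$ where the nonnegative integers $v_p(i)$ sum to $2$, so the multiset $\{v_p(i) : v_p(i) > 0\}$ is either $\{1,1\}$ (two distinct endpoints) giving $(C_\gL)_{ii} = 2$, or $\{2\}$ (both endpoints equal) giving $(C_\gL)_{ii} = 4$. For $i \ne j$, $(C_\gL)_{ij} = \sum_p v_p(i) v_p(j)$; a puncture $p$ with $v_p(i), v_p(j) > 0$ can contribute $v_p(i) v_p(j) \in \{1, 2, 4\}$ (the value $2$ when one of them is $2$ and the other $1$ — impossible here since $v_p$ at $2$ uses up both slots — actually $v_p(i) = 2$ forces $v_p(j) = 0$, so a single puncture contributes $1$ or nothing), but two distinct punctures $p, p'$ can each contribute $1$, giving total $2$; and if $i, j$ are the two (coinciding) endpoints handled within a single puncture the product is $1\cdot 1$, so across at most two punctures the total lies in $\{0,1,2\}$ — combined with the single-puncture possibility of $v_p(i)v_p(j)$, a careful enumeration gives $(C_\gL)_{ij} \in \{0,1,2,4\}$; the value $4$ occurs only in the exceptional configuration where a single puncture has both endpoints of one arc among $i,j$. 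Finally, $\dim_K \gL = \sum_{i,j}(C_\gL)_{ij} = \sum_{p \in M}\big(\sum_i v_p(i)\big)\big(\sum_j v_p(j)\big) = \sum_{p \in M} n_p^2$. I expect the main obstacle to be the bookkeeping in the off-diagonal bound, where one must carefully distinguish contributions coming from a single puncture versus from two different punctures incident to both arcs $i$ and $j$; everything else is a direct translation of the basis of Proposition~\ref{p:findim} into the language of the vectors $v_p$.
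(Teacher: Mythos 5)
Your computation of the Cartan matrix itself is correct and is essentially the paper's own argument: both count the basis elements of Proposition~\ref{p:findim} lying in $e_i\gL e_j$, identify the partial $g$-cycles with counterclockwise traversals of the arcs around a puncture, and obtain $v_p(i)v_p(j)$ contributions per puncture; your explicit bookkeeping on the diagonal (proper partial cycles give $v_p(i)^2-v_p(i)$, and $e_i,z_i$ restore the missing $\sum_p v_p(i)=2$) is just a more detailed version of the paper's remark that the traversal count misses $e_i$ but counts $z_i$ twice. The derivation of $\dim_K\gL=\sum_p n_p^2$ and of $(C_\gL)_{ii}\in\{2,4\}$ is also fine.

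There is, however, a genuine error in your justification of the off-diagonal bound $(C_\gL)_{ij}\in\{0,1,2,4\}$: you claim that $v_p(i)=2$ forces $v_p(j)=0$. This is false. The constraint $\sum_p v_p(i)=2$ is a constraint on a fixed arc $i$ summed over punctures, not on the arcs around a fixed puncture; $v_p(i)=2$ merely says both endpoints of $i$ lie at $p$ (a loop at $p$), and other arcs $j$ incident to $p$ still have $v_p(j)\geq 1$ (for instance, on a once-punctured surface every arc $j$ has $v_p(j)=2$). As written, your enumeration is also self-contradictory: it first concludes that a single puncture contributes only $0$ or $1$, which would exclude the value $4$, and then asserts that $4$ occurs. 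The correct argument uses only $v_p(i)\geq 0$ and $\sum_p v_p(i)=2$ (which is what the paper's one-line remark appeals to): at most two punctures have $v_p(i)>0$; if some puncture has $v_p(i)v_p(j)\geq 2$, then $v_p(i)=2$ or $v_p(j)=2$, so no other puncture can contribute and the total is $2$ or $4$; otherwise each of the at most two contributing punctures contributes $1$, so the total is at most $2$. This excludes $3$ (and anything above $4$) and yields $(C_\gL)_{ij}\in\{0,1,2,4\}$.
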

\begin{proof}
Consider two different vertices $i$ and $j$. Every non-zero path
from $i$ to $j$ is of the form $\alpha \cdot g(\alpha) \cdot \ldots
\cdot g^r(\alpha)$ for suitable $\alpha \in Q_1$ and $r \geq 0$, and
all these paths are linearly independent. Hence the entry
$(C_\gL)_{ij}$ is just the number of such paths.

Any such path corresponds to traversal of the arcs around a puncture
starting at the arc $i$ and ending at $j$ going at a
counterclockwise direction without completing a full round. For a
given puncture $p$, the number of such traversals is therefore
$v_p(i) v_p(j)$, hence the number of all such paths is $\sum_{p \in
M} v_p(i) v_p(j)$.

If $i=j$ then in this way we have not counted the trivial path
$e_i$, but on the other hand we counted the cycle $z_i$ twice in
view of the commutativity relations~\eqref{e:1ff2}, so the formula
$(C_\gL)_{ii} = \sum_{p \in M} v_p(i) v_p(i)$ still holds.

The remaining assertions on the entries $(C_\gL)_{ij}$ follow from
the fact that for any $i \in Q_0$, $v_p(i) \geq 0$ for $p \in M$ and
$\sum_{p \in M} v_p(i)=2$.
\end{proof}

\begin{cor} \label{c:Cartan}
We have $\rank C_\gL \leq |M|$ and $\det C_\gL = 0$.
\end{cor}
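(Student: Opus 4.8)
The plan is to extract both assertions directly from the formula $C_\gL = \sum_{p\in M} v_p v_p^T$ of Proposition~\ref{p:Cartan}. Let $V$ be the $|Q_0|\times|M|$ integer matrix whose columns are the vectors $v_p$, $p\in M$, so that $C_\gL = VV^T$. Then $\rank C_\gL = \rank(VV^T)\le\rank V\le|M|$, since $V$ has only $|M|$ columns; this is the first assertion. (There is no issue with the ground field here: the $v_p$ and $C_\gL$ have integer entries, and $\rank(AB)\le\min(\rank A,\rank B)$ holds over any field.)

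For $\det C_\gL=0$ it then suffices to prove $|M|<|Q_0|$, that is, that the triangulation has strictly more arcs than punctures; indeed $\rank C_\gL\le|M|<|Q_0|$ then forces $C_\gL$ to be singular. This inequality I would obtain by counting arc-ends in two ways. By definition $v_p(i)$ is the number of ends of the arc $i$ that lie at the puncture $p$, so $\sum_{p\in M}v_p(i)=2$ for every arc $i$, and summing over $i\in Q_0$ gives
\[
\sum_{p\in M} n_p \;=\; \sum_{p\in M}\sum_{i\in Q_0} v_p(i) \;=\; 2\,|Q_0| .
\]
On the other hand $n_p\ge 3$ for every $p$: a $g$-orbit of size $1$ would be a loop of $Q$ and one of size $2$ a $2$-cycle, both excluded by Proposition~\ref{p:quiver} (this is also the content of property~\eqref{e:T3} and of Lemma~\ref{l:orbits}). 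Combining the two, $2|Q_0|=\sum_{p\in M}n_p\ge 3|M|$, whence $|Q_0|\ge\frac{3}{2}|M|>|M|$, using that $M$ is non-empty.

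I do not anticipate any genuine difficulty: once Proposition~\ref{p:Cartan} is in hand, the whole argument is a short piece of linear algebra together with the counting estimate above. The one point that needs a word of justification is the strict inequality $|M|<|Q_0|$, and this rests entirely on the elementary but essential fact that every $g$-orbit contains at least three arrows.
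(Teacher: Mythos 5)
Your proof is correct. The rank bound is the same argument as the paper's (the paper phrases it as: each of the $|M|$ summands $v_p v_p^T$ has rank $1$, hence $\rank C_\gL \leq |M|$; writing $C_\gL = VV^T$ with $V$ having $|M|$ columns is the same computation). Where you diverge is in establishing the strict inequality $|M| < |Q_0|$: the paper simply quotes the formula $6g-6+3P$ for the number of arcs in a triangulation of a closed surface of genus $g$ with $P$ punctures and notes it exceeds $P$, whereas you double-count arc ends, $\sum_{p\in M} n_p = 2|Q_0|$, and combine this with $n_p \geq 3$ (a $g$-orbit of size $1$ or $2$ would give a loop or a $2$-cycle, excluded by Proposition~\ref{p:quiver}) to get $2|Q_0| \geq 3|M| > 2|M|$. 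Your version is slightly more self-contained: it uses only the combinatorial model (the facts $\sum_p v_p(i)=2$ and $n_p\geq 3$ already appearing in Lemma~\ref{l:orbits} and the proof of Proposition~\ref{p:Cartan}) and so would apply verbatim in the abstract setting of Section~\ref{sec:algebra} without referring back to surface topology, at the cost of a couple of extra lines; the paper's appeal to the arc-count formula is shorter but imports the Euler-characteristic computation. Both are complete, and your explicit remark that $M\neq\emptyset$ is needed for strictness is a point the paper leaves implicit.
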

\begin{proof}
The rank of each of the $|M|$ summands $v_p v_p^T$ of $C_{\gL}$ is
$1$, hence the first claim. The second claim follows now from the
fact that always $|M| < |Q_0|$. Indeed, the number of arcs in a
triangulation of a closed surface with genus $g$ and $P$ punctures
is $6g-6+3P$ which always exceeds $P$.
\end{proof}

\begin{remark}
The vanishing of $\det C_\gL$ comes in stark contrast to the
situation for the Jacobian algebras arising from triangulations of
bordered surfaces without punctures. Indeed, these algebras are
gentle~\cite{ABCP10} and their Cartan determinants are always powers
of $2$ by~\cite{Holm05}.
\end{remark}

\subsection{The center}

\begin{prop} \label{p:center}
The center $Z(\gL)$ of $\gL$ is isomorphic to the truncated
polynomial algebra $K[\{x_i\}_{i \in Q_0}]/(\{x_i x_j\}_{i,j \in
Q_0})$.
\end{prop}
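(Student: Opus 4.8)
The plan is to identify the center $Z(\gL)$ with the span of $\{e_i\}_{i \in Q_0} \cup \{z_i\}_{i \in Q_0}$ inside $\gL$, and then show this subalgebra has the stated structure. The inclusion $\supseteq$ is the easy direction: each $e_i$ is trivially... wait, $e_i$ is not central in general. Let me reconsider.

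**The plan is to** compute $Z(\gL)$ directly from the explicit basis of $\gL$ given in Proposition~\ref{p:findim}, namely $\{e_i\} \cup \{\alpha \cdot g(\alpha) \cdots g^r(\alpha)\} \cup \{z_i\}$. I would first observe that $Z(\gL)$ contains $1 = \sum_{i \in Q_0} e_i$ and all the elements $z_i$: indeed $z_i$ is a cycle at $i$, so $z_i = e_i z_i = z_i e_i$, and by Lemma~\ref{l:ziZ} we have $\alpha z_i = z_i \alpha = 0$ for every arrow $\alpha$, hence $z_i$ annihilates the radical and commutes with all idempotents, so it is central. This gives an inclusion $K\langle 1, z_i : i \in Q_0\rangle \subseteq Z(\gL)$, and since $z_i z_j = 0$ (as $z_i z_j$ would be a path of length $\geq 4$ through $i$, killed by Lemma~\ref{l:ffgg_zero}) and $z_i = e_i z_i$, setting $x_i = z_i$ yields a surjection $K[\{x_i\}]/(\{x_i x_j\}) \twoheadrightarrow K\langle 1, z_i\rangle$ which is injective because the $z_i$ together with $1$ are linearly independent in the basis.

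For the reverse inclusion $Z(\gL) \subseteq K\langle 1, z_i\rangle$, I would take an arbitrary central element $\zeta$, write it in the basis, and show all coefficients of the path-terms $\alpha \cdot g(\alpha) \cdots g^r(\alpha)$ (with $0 \leq r < n_\alpha - 1$) vanish, and that the idempotent coefficients are all equal. The idempotent part: writing $\zeta = \sum_i \lambda_i e_i + (\text{rad part})$, the condition $e_j \zeta = \zeta e_j$ forces the radical part to lie in $e_j \gL e_j$ for every $j$ simultaneously, which (since distinct $z_i, z_j$ live at distinct vertices and the $g$-paths are not at a single vertex unless they equal some $z_i$) kills all non-$z$ radical terms; and $\alpha \zeta = \zeta \alpha$ for each arrow $\alpha$ then forces $\lambda_{s(\alpha)} = \lambda_{t(\alpha)}$, so by connectedness of $Q$ all $\lambda_i$ agree. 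The key computation is this last step: for a basis path $p = \alpha \cdot g(\alpha) \cdots g^r(\alpha)$ appearing in $\zeta$ with nonzero coefficient, comparing $\beta \zeta$ and $\zeta \beta$ for suitable arrows $\beta$ (namely $\beta = g^r(\alpha)$, which can be right-multiplied onto $p$, versus arrows that can be left-multiplied) produces, via the explicit multiplication rules already worked out in the proof of Proposition~\ref{p:symmetric}, a relation forcing the coefficient of $p$ to equal the coefficient of a strictly longer path; iterating and using finite-dimensionality (Proposition~\ref{p:findim}) forces it to zero, much as in the proof of Lemma~\ref{l:1gfg_zero_star}.

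\textbf{The main obstacle} I anticipate is bookkeeping in this last step: one must check that every non-$z$ basis path $p$ admits an arrow that can be appended on exactly one side but not the other (or appended on both sides to two genuinely different longer basis elements), so that centrality yields a nontrivial constraint rather than a tautology. The degenerate cases — paths of length $0$ (the $e_i$, handled separately above), paths of length $n_\alpha - 2$ which by Lemma~\ref{l:1f} coincide with $c_{\balpha}^{-1}\,\balpha \cdot f(\balpha)$-type elements and feed into $z$, and the possibility $k = \ell$ flagged in the proof of Proposition~\ref{p:quiver} — need individual attention, but the multiplication tables in the proof of Proposition~\ref{p:symmetric} already record exactly the products $\beta \cdot p^\vee$ and $p^\vee \cdot \beta$ needed, so transporting them through the bimodule isomorphism $\Phi$ of Proposition~\ref{p:symmetric} gives all the required products $\beta \cdot p$ and $p \cdot \beta$ with no new computation. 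Thus the symmetry of $\gL$ is the real engine here, and the argument is essentially a finite check once that is invoked.
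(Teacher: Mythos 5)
Your overall route is the same as the paper's (compute $Z(\gL)$ inside the explicit basis of Proposition~\ref{p:findim} and keep only $1$ and the $z_i$), and the easy inclusion, including $z_iz_j=0$ and linear independence, is fine. The genuine gap is in your reduction step: the parenthetical claim that ``the $g$-paths are not at a single vertex unless they equal some $z_i$'' is false. Whenever an arc $i$ has both endpoints at the same puncture --- equivalently, when the two arrows $\alpha,\balpha$ starting at $i$ lie in the same $g$-orbit, say $\balpha=g^r(\alpha)$, which happens for instance for every arc of a once-punctured surface --- the proper subpaths $w_i=\alpha\cdot g(\alpha)\cdots g^{r-1}(\alpha)$ and $w_i'=g^r(\alpha)\cdots g^{n_\alpha-1}(\alpha)$ are nonzero basis cycles at $i$ distinct from $e_i$ and $z_i$. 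So commuting with the idempotents does not kill all non-$z$ radical terms, and disposing of exactly these extra cycles is the main content of the paper's proof.

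Your fallback mechanism (``comparing $\beta\zeta$ and $\zeta\beta$ forces the coefficient of $p$ to equal that of a strictly longer path; iterate using finite-dimensionality'') does not describe what centrality actually yields and would not straightforwardly close this gap: multiplying by an arrow either matches basis paths of equal length on the two sides, or produces on one side a term that cannot occur at all on the other. The paper's treatment of $w_i,w_i'$ is of the latter kind and uses two specific inputs you do not mention: first, $3\le r\le n_\alpha-3$ (no $2$-cycles in $Q$), so that $w_i'\cdot\balpha=0$ by Proposition~\ref{p:1gfg_zero}; second, with this vanishing, $\zeta\cdot\balpha$ contains the term $\rho_i\,\alpha\cdot g(\alpha)\cdots g^r(\alpha)$, a basis path beginning with $\alpha$, whereas every term of $\balpha\cdot\zeta$ begins with $\balpha$, forcing $\rho_i=0$ outright (and symmetrically $\rho_i'=0$ using multiplication by $\alpha$), with no iteration needed. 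Your appeal to the multiplication tables of Proposition~\ref{p:symmetric} transported through $\Phi$ is dispensable, since the few products required are immediate; with the loop-arc cycles handled as above, the remaining steps (equality of the $\lambda_i$ by connectedness, no constraint on the coefficients of the $z_i$ via Lemma~\ref{l:ziZ}) agree with the paper.
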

\begin{proof}
We show that a basis of $Z(\gL)$ is given by $1$ together with the
cycles $z_i$ for each $i \in Q_0$. The relation $z_i \cdot z_j = 0$
would follow from Lemma~\ref{l:ziZ}.

Let $z \in Z(\gL)$. Since $z$ commutes with the idempotents $e_i$,
it must be a sum of cycles. Let us describe the non-zero cycles
starting at a given vertex $i \in Q_0$. Obviously, $e_i$ and $z_i$
are such cycles. Let $\alpha$ be an arrow starting at $i$. If
$\balpha$ and $\alpha$ are not in the same $g$-orbit, then these are
all such cycles, otherwise write $\balpha=g^r(\alpha)$ and then
\begin{align*}
w_i=\alpha \cdot g(\alpha) \cdot \ldots \cdot g^{r-1}(\alpha) &,&
w_i'=g^r(\alpha) \cdot g^{r+1}(\alpha) \cdot \ldots \cdot
g^{n_\alpha-1}(\alpha)
\end{align*}
are also non-zero cycles starting at $i$ as in the following
picture,
\[
\xymatrix@=0.8pc{
{\bullet} \ar@{.}@/_3pc/[dddd] & & {\bullet} \ar@{.}@/^3pc/[dddd] \\ \\
& {\bullet} \ar[uur]_{\alpha} \ar[uul]^{g^r(\alpha)=\balpha} \\ \\
{\bullet} \ar[uur]^{g^{n_\alpha-1}(\alpha)} & &
{\bullet} \ar[uul]_{g^{r-1}(\alpha)}
}
\]
and together with $e_i$ and $z_i$ they form all such cycles.

Assume that $\alpha$ and $\balpha$ are in the same $g$-orbit. We
want to show that in $z$ the coefficients of the cycles $w_i$ and
$w'_i$ must vanish. Indeed, write
\[
z = \lambda_i e_i + \mu_i z_i + \rho_i w_i + \rho'_i w'_i + \ldots
\]
for some scalars $\lambda_i, \mu_i, \rho_i, \rho'_i$ where we ignore
all cycles not starting at $i$.

Since there are no $2$-cycles in $Q$, we have $3 \leq r \leq
n_\alpha-3$ and $w_i' \cdot \balpha = 0$ by
Proposition~\ref{p:1gfg_zero}. Thus, if $\rho_i \neq 0$, then by
Lemma~\ref{l:ziZ}
\[
z \cdot \balpha = \lambda_i \balpha + \rho_i w_i \cdot \balpha =
\lambda_i \balpha + \rho_i \alpha \cdot \ldots \cdot g^r(\alpha)
\]
whereas $\balpha \cdot z$ is a sum of paths all starting at
$\balpha$. Since $\alpha \cdot \ldots \cdot g^r(\alpha)$ cannot be
written as a sum of paths starting at $\balpha$, we get that $z
\cdot \balpha \neq \balpha \cdot z$, a contradiction. We deduce that
$\rho_i = 0$. A similar argument with multiplication by $\alpha$
shows that $\rho'_i = 0$ as well.

Finally note that all the coefficients $\lambda_i$ must be equal,
since $Q$ is connected, whereas there is no restriction on the
coefficients $\mu_i$ in view of Lemma~\ref{l:ziZ}.
\end{proof}

\section{Existence of ``nice'' triangulations}
\label{sec:exist}

\begin{prop}
Let $(S,M)$ be a marked closed surface. Then:
\begin{enumerate}
\renewcommand{\theenumi}{\alph{enumi}}
\item
If $(S,M)$ is not a sphere with $4$ or $5$ punctures, it has a
triangulation satisfying~\eqref{e:T4}.

\item
If $(S,M)$ is a sphere with $5$ punctures, it has a triangulation
satisfying~\eqref{e:T3h}, but no triangulation
satisfying~\eqref{e:T4}.

\item
If $(S,M)$ is a sphere with $4$ punctures, it has a triangulation
satisfying~\eqref{e:T3}, but no triangulation
satisfying~\eqref{e:T3h}.
\end{enumerate}
\end{prop}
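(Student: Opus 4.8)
The plan is to derive the non-existence statements from an Euler-characteristic count and the existence statements from three small explicit triangulations together with a single ``puncture-adding'' local move.

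First I would record the count. Any ideal triangulation of a closed surface of genus $g$ with $P$ punctures has exactly $n=6g-6+3P$ arcs: from $P-n+t=2-2g$ and $3t=2n$ one gets $t=2n/3$ and hence $n=3P-6+6g$, so the valences of the punctures (the numbers of incident arc-ends, a loop counted twice) sum to $2n=6P+12g-12$. If a triangulation has property~\eqref{e:T4} then each valence is at least $4$, forcing $2n\ge 4P$, i.e.\ $P\ge 6-6g$; for the sphere this means $P\ge 6$, so neither the $4$- nor the $5$-punctured sphere admits a triangulation with~\eqref{e:T4}. For the $4$-punctured sphere we have $2n=12=3P$, so in \emph{every} triangulation every puncture has valence exactly $3$, whence no arc has an endpoint with four or more incident arcs and no triangulation has property~\eqref{e:T3h}. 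This proves the negative halves of~(b) and~(c); it also shows there is nothing to prove for a sphere with at most $3$ punctures, since there $2n=6P+12g-12<3P$ and so no triangulation even has property~\eqref{e:T3}.

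For the existence statements I would exhibit the base cases as boundary complexes of polytopes. The tetrahedron gives an ideal triangulation of the $4$-punctured sphere in which every puncture has valence $3$: it has property~\eqref{e:T3} (its adjacency quiver is the one in Figure~\ref{fig:Qdiamond}), settling~(c). The triangular bipyramid gives an ideal triangulation of the $5$-punctured sphere with valences $(3,3,4,4,4)$ whose two valence-$3$ vertices, the apices, are non-adjacent; hence every arc has an endpoint of valence $4$, so it has property~\eqref{e:T3h}, settling~(b). The octahedron gives an ideal triangulation of the $6$-punctured sphere with all valences equal to $4$, hence with property~\eqref{e:T4}. Finally, for any genus $g\ge 1$ an ideal triangulation of the surface with a single puncture has that puncture of valence $2n=12g-6\ge 6$, hence has property~\eqref{e:T4}.

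To pass from $P$ to $P+1$ punctures I would use the following move on a triangulation $T$ having~\eqref{e:T4}: pick an arc $\gamma$ bordering two distinct triangles $abc$ and $abd$ (possible, since the dual graph of $T$ is connected with at least two vertices), place a new puncture $m$ in the interior of $\gamma$, replace $\gamma$ by its two halves $\gamma_a$ from $a$ to $m$ and $\gamma_b$ from $m$ to $b$, and add arcs joining $m$ to $c$ and $m$ to $d$. This yields an ideal triangulation $T'$ of the surface with one more puncture ($n+3$ arcs, $t+2$ triangles, none of the four new triangles $amc$, $mbc$, $amd$, $mbd$ self-folded) in which $m$ has valence $4$, the valences of $a$ and $b$ are unchanged, those of $c$ and $d$ increase, and all others are unchanged; so $T'$ again has~\eqref{e:T4}. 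Iterating from the octahedron yields a triangulation with~\eqref{e:T4} for every sphere with at least $6$ punctures, and iterating from the one-punctured genus-$g$ surface does the same for every surface of genus $g\ge 1$; together with the base cases this covers all marked closed surfaces admitting a triangulation with~\eqref{e:T3}, proving~(a). The one point I expect to require genuine care is checking that this puncture-adding move always produces a bona fide ideal triangulation — that the four new arcs are pairwise distinct and that no self-folded triangle is created, even when $\gamma$ is a loop or when the opposite vertices $c$ and $d$ coincide — but this is a routine verification of a standard local modification, and in every such case the valence bookkeeping above is unaffected (no valence drops and $m$ still acquires valence exactly $4$), so the induction goes through regardless.
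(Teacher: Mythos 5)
Your proposal is correct and takes essentially the same route as the paper: the same three polyhedral base triangulations (tetrahedron, bipyramid, octahedron), the same valence count both for once-punctured surfaces of genus $g\geq 1$ and for the non-existence claims, and the same induction step that splits an arc at a newly inserted puncture of valence $4$. One harmless overstatement: for the sphere with $4$ punctures a triangulation with self-folded triangles need not have all valences equal to $3$, but since~\eqref{e:T3h} presupposes~\eqref{e:T3} your count still rules out~\eqref{e:T3h}, exactly as in the paper.
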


The proof is by induction on the number of punctures, and follows by
combining the statements of the next lemmas.

\begin{lemma}
Let $(S,M)$ be a marked closed surface and $(S,M')$ the marked
closed surface obtained from $(S,M)$ by adding one more puncture.
\begin{enumerate}
\renewcommand{\theenumi}{\alph{enumi}}
\item
If $(S,M)$ has a triangulation satisfying~\eqref{e:T3}, then so does
$(S,M')$.

\item
If $(S,M)$ has a triangulation satisfying~\eqref{e:T3h}, then so
does $(S,M')$.

\item
If $(S,M)$ has a triangulation satisfying~\eqref{e:T4}, then so does
$(S,M')$.
\end{enumerate}
\end{lemma}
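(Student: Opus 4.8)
The plan is to add one puncture to an existing triangulation by placing it in the interior of some triangle and connecting it to the three vertices of that triangle. This is the standard "star" refinement: given a triangulation $T$ of $(S,M)$ and a chosen triangle $\Delta$ with vertices (possibly non-distinct punctures) $a,b,c$ and sides the arcs $x,y,z$, we introduce the new puncture $q$ in the interior of $\Delta$ and three new arcs $qa$, $qb$, $qc$. This subdivides $\Delta$ into three triangles, so the result is a genuine ideal triangulation $T'$ of $(S,M')$. The new puncture $q$ has exactly three incident arcs, so $T'$ automatically satisfies~\eqref{e:T3} at $q$. At every puncture of $M$, the number of incident arcs of $T'$ is at least as large as in $T$ (it can only grow, when a vertex of $\Delta$ receives one of the new arcs), so~\eqref{e:T3} is preserved everywhere; this proves part~(a).

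\textbf{For part~(b):} the same construction works, but now one must be careful about~\eqref{e:T3h}, which concerns arcs, not punctures. The three new arcs $qa$, $qb$, $qc$ each have $q$ as one endpoint, where only three arcs are incident, so for~\eqref{e:T3h} to hold we need the \emph{other} endpoint ($a$, $b$, or $c$) to have at least four incident arcs in $T'$. Thus the naive construction is not enough: I would instead choose the triangle $\Delta$ more cleverly, or perform one additional flip afterwards. The cleanest fix: after inserting $q$ and the three spokes, also flip the arc $qa$ (say). Flipping $qa$ replaces it by the "opposite" diagonal of the quadrilateral formed by the two triangles adjacent to $qa$; one checks that this raises the arc-count at $q$ to four and still keeps every old arc with a high-degree endpoint. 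Alternatively — and this is probably the argument the paper intends — one observes that in a triangulation of $(S,M)$ satisfying~\eqref{e:T3h} every arc already has an endpoint of degree $\geq 4$; after the star refinement, the only arcs whose~\eqref{e:T3h}-status could be in doubt are the three new spokes and the three sides $x,y,z$ of $\Delta$, and for these one picks $\Delta$ so that at least one of $a,b,c$ has degree $\geq 4$ in $T$ (hence $\geq 5$, hence $\geq 4$, in $T'$), giving every new spoke a good endpoint; the old sides $x,y,z$ keep whatever good endpoint they had. Since a triangulation satisfying~\eqref{e:T3h} has more than enough high-degree punctures to find such a $\Delta$, this goes through.

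\textbf{For part~(c):} here the requirement is~\eqref{e:T4}, i.e. \emph{every} puncture has $\geq 4$ incident arcs. The star refinement creates a puncture $q$ of degree exactly $3$, which violates~\eqref{e:T4} outright. So the construction must be modified: insert $q$ in $\Delta$, add the three spokes, and then flip one spoke, say $qa$. Flipping $qa$ removes it and adds the diagonal connecting the two vertices of the quadrilateral that are $\neq q$; after this flip $q$ has degree $4$. One then checks: (i) $q$ now meets~\eqref{e:T4}; (ii) every puncture of $M$ still meets~\eqref{e:T4}, because the flip either leaves its degree unchanged or changes it by $\pm 1$, and one must verify it never drops a puncture of $M$ below $4$ — this is where the choice of $\Delta$ and of which spoke to flip matters, and I would pick them so that the flip only increases degrees at punctures of $M$ (e.g. flip into a region where the relevant vertex already has degree $\geq 4$).

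\textbf{The main obstacle} I anticipate is part~(c): controlling the degree changes under the single corrective flip so that no puncture of the old surface drops below four while the new puncture rises to four. One must show such a favorable triangle and spoke always exist in a~\eqref{e:T4}-triangulation of $(S,M)$; since~\eqref{e:T4} gives every puncture degree $\geq 4$ to spare, there is room, but making the choice explicit and checking all adjacency cases (including the degenerate case where $a,b,c$ are not all distinct, e.g. the two ends of a side of $\Delta$ coincide) is the delicate part. The base cases of the induction — verifying by hand that the sphere with $4$ punctures has a~\eqref{e:T3}-triangulation (the quiver of Figure~\ref{fig:Qdiamond}) but none satisfying~\eqref{e:T3h}, that the sphere with $5$ punctures has a~\eqref{e:T3h}- but no~\eqref{e:T4}-triangulation, and that the remaining small surfaces (torus, sphere with $\geq 6$ punctures, etc.) have~\eqref{e:T4}-triangulations — are routine Euler-characteristic counts combined with explicit small pictures, and I would dispatch them separately after the inductive lemmas above.
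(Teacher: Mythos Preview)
Your star refinement (placing $q$ inside a triangle $\Delta$ and joining it to the three corners) does handle part~(a), and in fact already handles part~(b) without any extra work: each corner $a,b,c$ of $\Delta$ gains at least one spoke, so its degree in $T'$ is $\geq 3+1=4$, and hence every new spoke automatically has a degree-$4$ endpoint --- no clever choice of $\Delta$ or auxiliary flip is needed. For part~(c), however, your proposed fix is wrong: the spoke $qa$ is the common side of the two new triangles $(q,a,b)$ and $(q,a,c)$, so the quadrilateral around it has corners $q,b,a,c$, and flipping $qa$ inserts an arc from $b$ to $c$. Thus $q$ \emph{loses} an incident arc and drops to degree~$2$, not~$4$. (What would work is to flip a \emph{side} of $\Delta$ instead --- say the side between $b$ and $c$ --- which adds an arc from $q$ to the opposite vertex of the neighbouring outer triangle and raises $q$ to degree~$4$; one then checks that the degrees at $b$ and $c$ are unchanged on balance, so~\eqref{e:T4} survives.)

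The paper sidesteps all of this with a different and cleaner construction: place the new puncture $p$ \emph{on an arc} of $T$ rather than inside a triangle. That arc is split in two, and one adds an arc from $p$ to the opposite vertex of each of the two adjacent triangles; this gives $p$ exactly four incident arcs from the outset, while no old puncture loses any arcs. A single picture then disposes of all three parts simultaneously, with no flips and no case analysis.
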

\begin{proof}
Let $T$ be a triangulation of $(S,M)$ without self-folded triangles.
We may place the additional puncture $p$ of $M'$ on an arc of $T$
and obtain a triangulation $T'$ of $(S,M')$ by adding four arcs
incident to $p$ as in the right picture below:
\begin{align*}
\xymatrix@=1pc{
& & & {\times} \ar@{-}[dd] \\
{\times} \ar@{-}@/_/[drrr] \ar@{-}@/^/[urrr] &&& &&&
{\times} \ar@{-}@/^/[dlll] \ar@{-}@/_/[ulll] \\
& & & {\times}
}
&&
\xymatrix@=1pc{
& & & {\times} \ar@{-}[d] \\
{\times} \ar@{-}@/_/[drrr] \ar@{-}@/^/[urrr] &&&
{\times_p} \ar@{-}[lll] \ar@{-}[rrr]
&&&
{\times} \ar@{-}@/^/[dlll] \ar@{-}@/_/[ulll] \\
& & & {\times} \ar@{-}[u]
}
\end{align*}
In $T'$ there are $4$ arcs incident to $p$ and the number of arcs
incident to each other puncture has not decreased. The lemma thus
follows.
\end{proof}

\begin{lemma}
Any triangulation of a once-punctured closed surface of genus $g
\geq 1$ has property~\eqref{e:T4}.
\end{lemma}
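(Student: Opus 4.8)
The plan is to exploit the Euler-characteristic count for triangulations together with condition~\eqref{e:T3}. Recall that a once-punctured closed surface of genus $g$ has a single puncture $p$, so the quantity we must control is the number $n$ of arcs incident to $p$, counted with multiplicity (an arc with both ends at $p$ is counted twice). Since there is only one puncture, \emph{every} arc of $T$ has both endpoints at $p$, so each arc contributes exactly $2$ to this count; hence $n = 2|Q_0|$, where $|Q_0|$ is the number of arcs of $T$. By the formula recalled in the proof of Corollary~\ref{c:Cartan}, a triangulation of a closed surface of genus $g$ with $P$ punctures has $6g - 6 + 3P$ arcs; with $P = 1$ this gives $|Q_0| = 6g - 3$.

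Next I would assemble these: $n = 2(6g-3) = 12g - 6$. For $g \geq 1$ this is at least $6$, which already exceeds $4$. Thus the unique puncture has at least $6 > 4$ arcs incident to it, so $T$ satisfies~\eqref{e:T4} (and a fortiori~\eqref{e:T3}). The only subtlety to address is why we may assume $T$ has no self-folded triangles when invoking the arc count — but the arc-count formula $6g-6+3P$ holds for \emph{any} ideal triangulation regardless of self-folded triangles, and moreover condition~\eqref{e:T4} only constrains the total incidence number at the puncture, which the computation $n = 12g-6$ establishes directly. (One should note that a triangulation of a once-punctured surface can have a self-folded triangle only if there is some arc with a single endpoint, which is automatic here since the puncture is unique; however, the incidence count is unaffected, since a self-folded arc is still counted twice.)

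The main obstacle — really the only point requiring care — is making sure the multiplicity conventions match: an arc both of whose ends lie at $p$ must be counted twice in the incidence number, exactly as stipulated in~\eqref{e:T3}, and this is precisely what makes $n = 2|Q_0|$ rather than $|Q_0|$. Once that bookkeeping is pinned down, the lemma reduces to the inequality $12g - 6 \geq 4$ for $g \geq 1$, which is immediate. I do not anticipate any serious difficulty; this is essentially a one-line consequence of the arc-count formula once the counting convention is correctly applied.
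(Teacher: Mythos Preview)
Your argument is correct and is essentially the same as the paper's: both compute that each of the $6g-3$ arcs is counted twice at the unique puncture, giving $12g-6 \geq 6$ incident arcs. The paper's version is just terser, omitting your (ultimately unnecessary) digression about self-folded triangles.
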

\begin{proof}
When counting the arcs incident to the puncture, each arc of the
triangulation is counted twice. Since there are $6g-3$ arcs in the
triangulation, the puncture has $12g-6$ arcs incident to it.
\end{proof}

\begin{lemma}
\begin{enumerate}
\renewcommand{\theenumi}{\alph{enumi}}
\item
A sphere with $6$ punctures has a triangulation
satisfying~\eqref{e:T4}.

\item
A sphere with $5$ punctures has a triangulation
satisfying~\eqref{e:T3h}, but no triangulation
satisfying~\eqref{e:T4}.

\item
A sphere with $4$ punctures has a triangulation
satisfying~\eqref{e:T3}, but no triangulation
satisfying~\eqref{e:T3h}.
\end{enumerate}
\end{lemma}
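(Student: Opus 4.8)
The plan is to separate the non-existence assertions, which follow from a global count of arc--endpoint incidences, from the existence assertions, which I will prove by exhibiting explicit triangulations arising as boundaries of simplicial polytopes.

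For the counting, recall that a triangulation of the $P$-punctured sphere has $3P-6$ arcs, so the total number of incidences between punctures and arcs --- an arc with both endpoints at the same puncture contributing $2$ --- equals $2(3P-6)=6P-12$; writing $d_p$ for the number of arcs incident to a puncture $p$, this reads $\sum_{p\in M} d_p = 6P-12$. When $P=5$ this sum is $18$, so it is impossible to have $d_p\geq 4$ for all five punctures, since that would force the sum to be at least $20$; hence the $5$-punctured sphere admits no triangulation with \eqref{e:T4}. When $P=4$ the sum is $12$, so any triangulation with \eqref{e:T3} (i.e.\ $d_p\geq 3$ for all $p$) must have $d_p=3$ for every puncture; in particular no puncture then has four incident arcs, so no triangulation of the $4$-punctured sphere can satisfy \eqref{e:T3h}, which by definition includes \eqref{e:T3}.

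For the existence parts I would take, in each case, the ideal triangulation of the punctured sphere whose arcs are the edges and whose triangles are the $2$-faces of the boundary of a suitable simplicial polytope, the punctures being its vertices. For part (a) use the octahedron: $6$ vertices, $12$ edges, $8$ faces, every vertex of degree $4$, giving \eqref{e:T4}. For part (b) use the triangular bipyramid: $5$ vertices --- the two apices, of degree $3$, and the three equatorial vertices, of degree $4$ --- with $9$ edges and $6$ faces; it has \eqref{e:T3}, and since the two degree-$3$ vertices are not joined by an edge, every arc has an endpoint of degree $\geq 4$, so \eqref{e:T3h} holds. For part (c) use the tetrahedron: $4$ vertices, $6$ edges, $4$ faces, all vertices of degree $3$, which plainly has \eqref{e:T3}. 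The only point that needs checking is that these polytope boundaries are honest ideal triangulations: because a simplicial polytope has no two faces meeting in more than an edge and no face with a repeated vertex, the edges are pairwise distinct noncontractible arcs and the faces are embedded triangles, so in particular there are no self-folded triangles and the incidence numbers $d_p$ are exactly the vertex degrees read off above. I do not expect a real obstacle here; the work is just drawing the three pictures and recording the degree sequences, so the main content of the lemma is genuinely the two short counting arguments.
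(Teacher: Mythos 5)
Your proposal is correct and follows essentially the same route as the paper: the paper exhibits the same three triangulations (tetrahedron, triangular bipyramid, octahedron) for the existence claims and rules out~\eqref{e:T4} for $4$ or $5$ punctures and~\eqref{e:T3h} for $4$ punctures by the same incidence count (number of arcs versus twice the number of punctures, forcing all degrees to equal $3$ in the $4$-punctured case). Your extra checks (the two degree-$3$ apices of the bipyramid being non-adjacent, and the polytope boundaries giving genuine ideal triangulations) are just slightly more explicit versions of what the paper leaves to the figure.
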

\begin{proof}
Figure~\ref{fig:tri456} presents triangulations of spheres with $4$,
$5$ and $6$ punctures with the required properties. Note that they
can be viewed as the faces of a tetrahedron, triangular bipyramid
and an octahedron, respectively.

No triangulation of a sphere with $4$ or $5$ punctures can
satisfy~\eqref{e:T4}, since the number of arcs ($6$ and $9$,
respectively) is less than twice the number of punctures. Moreover,
a triangulation of a sphere with $4$ punctures which
satisfies~\eqref{e:T3} cannot satisfy~\eqref{e:T3h}, since at all
the punctures there are exactly three incident arcs.
\end{proof}

\begin{figure}
\begin{align*}
\begin{array}{c}
\xymatrix@=1.5pc{
& {\times} \ar@{-}@/^4pc/[dd] \ar@{-}@/_/[dr] \\
{\times} \ar@{-}@/^/[ur] \ar@{-}[rr] &&
{\times} \ar@{-}@/_/[dl] \\
& {\times} \ar@{-}@/^/[ul]
}
\end{array}
&&
\begin{array}{c}
\xymatrix@=0.3pc{
&& {\times} \ar@{-}[ddrr] \ar@{-}@/_4pc/[dddddl] \ar@{-}@/^4pc/[dddddr] \\ \\
{\times} \ar@{-}[uurr] && &&
{\times} \ar@{-}[dddl] \ar@{-}@/_/[llll] \ar@{-}@/_/[dddlll] \\ \\ \\
& {\times} \ar@{-}[uuul] && {\times} \ar@{-}@/^/[ll]
}
\end{array}
&&
\begin{array}{c}
\xymatrix@=0.5pc{
\\
& {\times} \ar@{-}[rr] \ar@{-}@/^/[dddd]
&& {\times} \ar@{-}[ddr] \ar@{-}@/_3pc/[ddlll] \ar@{-}@/^3pc/[dddd] \\ \\
{\times} \ar@{-}[uur] & && &
{\times} \ar@{-}[ddl] \ar@{-}@/^/[uulll] \ar@{-}@/_/[ddlll] \\ \\
& {\times} \ar@{-}[uul]
&& {\times} \ar@{-}[ll] \ar@{-}@/^3pc/[uulll]
\\
&
}
\end{array}
\end{align*}
\caption{Triangulations of spheres with $4$, $5$ and $6$ punctures.}
\label{fig:tri456}
\end{figure}
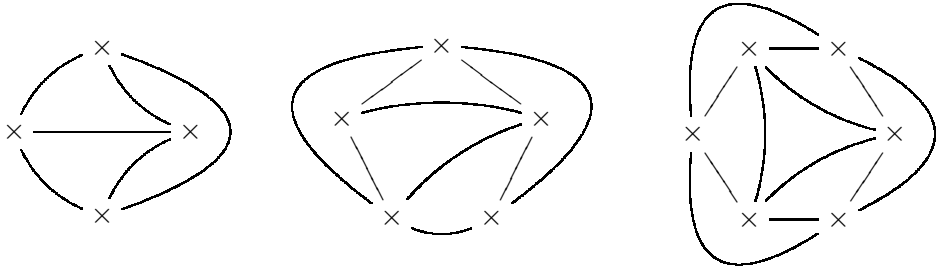

\bibliographystyle{amsplain}
\bibliography{findim}

\end{document}